\theoremstyle{definition}
\newtheorem{thm}{Theorem}[section]
\newtheorem{dfn}[thm]{Definition}
\newtheorem{not-dfn}[thm]{Notation-Definition}
\newtheorem{nota}[thm]{Notation}
\newtheorem{not-set}[thm]{Notation-Setting}
\newtheorem{prop}[thm]{Proposition}
\newtheorem{cor}[thm]{Corollary}
\newtheorem{lem}[thm]{Lemma}
\newtheorem{rem}[thm]{Remark}
\newtheorem{prop-dfn}[thm]{Proposition-Definition}
\newcommand{\N}{\mathbb{N}}
\newcommand{\Z}{\mathbb{Z}}
\newcommand{\F}{\mathbb{F}}
\newcommand{\G}{\mathbb{G}}
\newcommand{\cC}{\mathcal{C}}
\newcommand{\cO}{\mathcal{O}}
\newcommand{\fm}{\mathfrak{m}}
\newcommand{\fp}{\mathfrak{p}}
\newcommand{\Ab}{\mathrm{Ab}}
\newcommand{\Alg}{\mathrm{Alg}}
\newcommand{\Der}{\mathrm{Der}}
\newcommand{\Coker}{\mathrm{Coker}\,}
\newcommand{\Ima}{\mathrm{Im}\,}
\newcommand{\Ker}{\mathrm{Ker}\,}
\newcommand{\Hom}{\mathop{\mathrm{Hom}}\nolimits}
\newcommand{\Ext}{\mathop{\mathrm{Ext}}\nolimits}
\newcommand{\length}{\mathop{\mathrm{length}}\nolimits}
\newcommand{\Mor}{\mathrm{Mor}}
\newcommand{\Frob}{\mathrm{Frob}}
\newcommand{\Pic}{\mathrm{Pic}}
\newcommand{\id}{\mathrm{id}}
\newcommand{\red}{\mathrm{red}}
\newcommand{\Spec}{\mathrm{Spec}\,}
\newcommand{\sep}{\mathrm{sep}}
\newcommand{\Frac}{\mathrm{Frac}}
\newcommand{\jac}{\mathop{\mathrm{jac}}}
\title[Conductors, Picard schemes, and Jacobian numbers]{On behavior of conductors, Picard schemes, and Jacobian numbers of varieties over imperfect fields}
\author{Ippei Nagamachi,\ Teppei Takamatsu}
\email{nagachi@kurims.kyoto-u.ac.jp,\ teppei@ms.u-tokyo.ac.jp}
\date{}
\begin{document}

\maketitle

\setcounter{section}{-1}

\begin{abstract}
Let $X$ be a regular geometrically integral variety over an imperfect field $K$.
Unlike the case of characteristic $0$, $X':=X\times_{\mathrm{Spec}\,K}\mathrm{Spec}\,K'$ may have singular points for a (necessarily inseparable) field extension $K'/K$.
In this paper, we define new invariants of the local rings of codimension $1$ points of $X'$, and use these invariants for the calculation of $\delta$-invariants (, which relate to genus changes,) and conductors of such points.
As a corollary, we give refinements of Tate's genus change theorem and \cite[Theorem 1.2]{PW}.
Moreover, when $X$ is a curve, we show that the Jacobian number of $X$ is $2p/(p-1)$ times of the genus change by using the above calculation.
In this case, we also relate the structure of the Picard scheme of $X$ with invariants of singular points of $X$.
To prove such a relation, we give a characterization of the geometrical normality of algebras over fields of positive characteristic.
\end{abstract}

\setcounter{tocdepth}{1}
\tableofcontents

\section{Introduction}
Let $K$ be a field, $p\,(\geq0)$ the characteristic of $K$, and $C$ a regular proper geometrically integral curve over $K$.
If $K$ is perfect (in particular, if $p=0$), $C$ is smooth over $K$.
On the other hand, if $K$ is imperfect, $C':=C\times_{\Spec K}\Spec K'$ may have non-regular points for some purely inseparable field extension $K'/K$.
The $\delta$-invariant (cf.\,Notation-Definition \ref{deltaconddef}.3), which relates to genus changes, and the conductor (cf.\,Notation-Definition \ref{deltaconddef}) of such a singular point $c'$, both of which measure how strong the singularity is, have been well-studied.
In \cite{Ta}, Tate proved
$$g_{C}-g_{\widetilde{C'}}\equiv 0\mod \frac{p-1}{2}$$
for any purely inseparable field extension $K'/K$,
where $\widetilde{C'}$ is the normalization of $C'$ in its function field, and $g_{-}$ denotes the arithmetic genus of the curve.
See \cite{Sc} for a modern and simple proof of this theorem.
As a variant of Tate's genus change theorem, it is shown that the $\delta$-invariant is greater than or equal to $(p-1)/2$ in \cite[Theorem 5.7 and Remark 5.9]{IIL} in the case where $K'$ is an algebraic closure of $K$.
Note that $g_{C}-g_{\widetilde{C'}}$ coincides with the sum of the product of the $\delta$-invariants and the residue extension degrees of all the singular points of $C'$.
Patakfalvi and Waldron proved that the conductor divisors of the singular points are divisible by $(p-1)$ (\cite[Theorem 1.2]{PW}).
(Note that, in \cite{PW}, more general varieties are treated.
See also \cite{Tanaka2019}.)
In this paper, we study these two invariants via two distinct approaches.

First, we consider the Picard schemes of algebraic curves.
In \cite{A}, Achet studied the structure of the Picard scheme of a form of $\mathbb{A}^{1}_{K}$, which reflects the properties of the singularity of the complement of $\mathbb{A}^{1}_{K}$ in its regular compactification.
In this paper, we treat general regular singular points of curves.
Suppose that $p>0$ and fix an algebraic closure $\overline{K}$ of $K$.
For any $i\in\N$, write $C(i)$ for the normalization of $C\times_{\Spec K}\Spec K^{1/p^{i}}$ in its function field, $C_{\overline{K}}$ (resp.\,$C(i)_{\overline{K}}$) 
for the algebraic curve 
$C\times_{\Spec K}\Spec\overline{K}$
(resp.\,$C(i)\times_{\Spec K^{1/p}}\Spec\overline{K}$), $g_{C}$ (resp.\,$g_{C(i)}$) for the arithmetic genus of $C$ (resp.\,$C(i)$), and $\widetilde{C}$ for the normalization of $C_{\overline{K}}$ in its function field.

\begin{thm}[See Theorem \ref{overclosed} for the precise statement]
The algebraic group
$$\Ker(\Pic_{C_{\overline{K}}/\overline{K}}\to \Pic_{C(i)_{\overline{K}}/\overline{K}})$$
coincides with the maximal reduced closed subscheme of the algebraic group
$$\Ker(\Pic_{C_{\overline{K}}/\overline{K}}\to \Pic_{\widetilde{C}/\overline{K}})[p^{i}].$$
Here, for a commutative algebraic group $P$, $P[p^{i}]$ denotes the subalgebraic group $\Ker(p^{i}\times\colon P\to P)$ of $P$.
Moreover, (both of) these algebraic groups also coincide with the largest subgroup of $\Ker(\Pic_{C_{\overline{K}}/\overline{K}}\to \Pic_{\widetilde{C}/\overline{K}})$ that is $p^{i}$-torsion, connected, smooth, and unipotent.
\label{intromain1}
\end{thm}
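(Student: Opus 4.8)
The plan is to reduce the global identity of group schemes to a local computation at the finitely many non-smooth points of $C_{\overline K}$, and to identify the level-$i$ partial normalization with the operation of adjoining $p^{i}$-th roots. First I would set up the structural exact sequence: let $\nu\colon\widetilde C\to C_{\overline K}$ be the normalization and, for each $i$, let $\nu_{i}\colon C(i)_{\overline K}\to C_{\overline K}$ be the intermediate partial normalization, through which $\nu$ factors and which equals $\widetilde C$ for $i\gg0$. From $1\to\cO_{C_{\overline K}}^{*}\to\nu_{*}\cO_{\widetilde C}^{*}\to\mathcal S\to1$, with $\mathcal S$ a skyscraper on the singular locus, and $H^{0}(\cO^{*})=\overline K^{*}$ on both proper connected curves, one gets
$$0\to\prod_{x}\bigl(\widetilde{\cO}_{x}^{*}/\cO_{x}^{*}\bigr)\to\Pic_{C_{\overline K}/\overline K}\to\Pic_{\widetilde C/\overline K}\to0,$$
and likewise for each $\nu_{i}$. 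Since $C_{\overline K}$ is a curve, $H^{2}(\cO_{C_{\overline K}})=0$, so $\Pic_{C_{\overline K}/\overline K}$ is smooth; hence $G:=\Ker(\Pic_{C_{\overline K}/\overline K}\to\Pic_{\widetilde C/\overline K})$ is a smooth connected group identified locally with $\widetilde{\cO}_{x}^{*}/\cO_{x}^{*}$, while $K_{i}:=\Ker(\Pic_{C_{\overline K}/\overline K}\to\Pic_{C(i)_{\overline K}/\overline K})$ is identified locally with $\cO_{i,x}^{*}/\cO_{x}^{*}$ for the intermediate ring $\cO_{x}\subseteq\cO_{i,x}\subseteq\widetilde{\cO}_{x}$. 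Regularity and geometric integrality of $C$ force these singularities to be unibranch, so all these groups are unipotent; smoothness and connectedness of $K_{i}$ I would read off from the $\fm$-adic filtration of $\cO_{i,x}^{*}/\cO_{x}^{*}$, whose graded pieces are $\G_{a}$'s. This reduces the theorem to statements about $\cO:=\cO_{x}\subseteq\cO_{i}:=\cO_{i,x}\subseteq\widetilde{\cO}:=\widetilde{\cO}_{x}$.

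Next I would show $K_{i}$ is $p^{i}$-torsion. The total fraction field of $\cO\otimes_{K}K^{1/p^{i}}$ is purely inseparable of exponent $\le i$ over $\Frac\cO$, so every element of its normalization has $p^{i}$-th power in $\cO$ (as the regular, hence normal, $\cO$ is integrally closed in $\Frac\cO$); tensoring up to $\overline K$ and using that $p^{i}$-th powers are additive in characteristic $p$ gives $\cO_{i}^{p^{i}}\subseteq\cO$, scheme-theoretically. Thus every $u\in\cO_{i}^{*}$ has $u^{p^{i}}\in\cO^{*}$, so $K_{i}$ is killed by $p^{i}$; being in addition smooth, this yields $K_{i}\subseteq(G[p^{i}])_{\red}$.

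The reverse inclusion is the crux and the main obstacle. On $\overline K$-points, $(G[p^{i}])_{\red}$ is $\{u\in\widetilde{\cO}^{*}:u^{p^{i}}\in\cO^{*}\}/\cO^{*}$, so I must prove that the partial normalization already contains every such $p^{i}$-th root, i.e.
$$\cO_{i}=\{u\in\widetilde{\cO}:u^{p^{i}}\in\cO\}.$$
Equivalently, the tower $\cO=\cO_{0}\subseteq\cO_{1}\subseteq\cdots\subseteq\widetilde{\cO}$ is strictly compatible with Frobenius: raising to the $p^{i}$-th power drops the normalization level by exactly $i$ and leaves no $p^{i}$-th root unadjoined. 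The inclusion $\supseteq$ is the easy power computation above; for $\subseteq$ I expect to use that $C(i)$ is regular over $K^{1/p^{i}}$ — so that $\cO_{i}$ is genuinely the full level-$i$ normalization and is saturated against $p^{i}$-th roots — together with a length/$\delta$-invariant count controlling the jumps of the filtration, relying on the explicit local description of $\widetilde{\cO}^{*}/\cO^{*}$ and its Frobenius action. This saturation statement is where the real work lies.

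Finally, granting $K_{i}=(G[p^{i}])_{\red}$, the three descriptions coincide formally. Over the perfect field $\overline K$ the reduced subscheme $(G[p^{i}])_{\red}$ is a subgroup scheme of $G$; it is connected, smooth and unipotent because it equals the connected smooth unipotent $K_{i}$, and it is $p^{i}$-torsion. Any subgroup $H\subseteq G$ that is smooth and $p^{i}$-torsion lies in $(G[p^{i}])_{\red}$, so the latter is simultaneously the maximal reduced closed subscheme of $G[p^{i}]$ and the largest connected smooth unipotent $p^{i}$-torsion subgroup — both equal to $K_{i}$, as claimed.
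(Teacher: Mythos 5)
Your reduction to the local rings at the finitely many singular points, via the skyscraper-sheaf exact sequences and the identifications $G(\overline K)\simeq\prod_x\widetilde{\cO}_x^{*}/\cO_x^{*}$ and $K_i(\overline K)\simeq\prod_x\cO_{i,x}^{*}/\cO_x^{*}$, matches the paper's proof of Theorem \ref{overclosed}, as does the easy inclusion $K_i\subseteq(G[p^i])_{\red}$ (from $\cO_i^{p^i}\subseteq\cO$) and the final formal identification of the three descriptions. But the reverse inclusion --- the saturation statement $\cO_i=\{u\in\widetilde{\cO}\mid u^{p^i}\in\cO\}$, which you correctly single out as ``the crux and the main obstacle'' --- is not proved: you only gesture at ``a length/$\delta$-invariant count controlling the jumps of the filtration'' without carrying it out. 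Since this equality \emph{is} the mathematical content of the theorem (everything else is soft), the proposal as written has a genuine gap exactly there.

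The paper closes this gap with Lemma \ref{element}, which needs no length count. The argument is: for $R=\cO_{C,c}$ (the local ring over $K$, which is regular hence normal, and geometrically reduced), the ring $R^{1/p^{m}}$ is the integral closure of $R$ in $\Frac(R)^{1/p^{m}}$, so the normalization $R(m)$ of $R\otimes_K K^{1/p^{m}}$ equals $\Frac(R\otimes_K K^{1/p^{m}})\cap R^{1/p^{m}}$; one then base-changes along the flat extension $K^{1/p^{m}}\subseteq L$ for $L$ perfect (where $(R\otimes_K L)^{1/p^{m}}=R^{1/p^{m}}\otimes_{K^{1/p^{m}}}L$ and fraction fields commute with the tensor product by geometric reducedness), and descends to general $L$ by faithful flatness of $L^{1/p^{\infty}}/L$. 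This yields $R(m)\otimes_{K^{1/p^{m}}}\overline K=\{u\in\Frac(R\otimes_K\overline K)\mid u^{p^{m}}\in R\otimes_K\overline K\}$, which is precisely your missing equality. If you want to complete your write-up along your own lines, note that a $\delta$-invariant count would in any case have to be supplemented by an argument that the candidate ring $\{u\in\widetilde{\cO}\mid u^{p^{i}}\in\cO\}$ is \emph{a priori} contained in (the stalk of) a partial normalization defined over $K^{1/p^{i}}$; that containment is exactly what the normality-plus-flat-base-change argument delivers directly, so the detour through lengths buys you nothing.
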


Since the dimension of the Picard scheme of a curve coincides with its genus, this theorem says that the genus change $g_{C(i)}-g_{C}$ can be written simply in terms of algebraic groups, i.e., the dimension of the $p^{i}$-torsion part of the affine part of the Picard scheme of $C_{\overline{K}}$.
To prove Theorem \ref{intromain1}, we study the Weil restriction of $\mathbb{G}_{m,K}$ in Section \ref{affinesection} and the Picard schemes of proper curves with cusps in Section \ref{Picpresection}.
Moreover, in Section \ref{geomnormalsection}, we prove a geometrical normality criterion of rings over imperfect fields which asserts the following:
For any algebra $R$ over $K$, $R$ is geometrically normal over $K$ if and only if $R\otimes_{K}K^{1/p}$ is normal (see Corollary \ref{geomnor}).
We also study conditions that some algebraic subgroups of the affine part of $\Pic_{C'/K'}$ are split over $K'$ in the case of $[K':K]=p$ (see Theorem \ref{x01cor}).

As a corollary of Theorem \ref{intromain1}, we can obtain an inequality
$$g_{C(i+1)}-g_{C(i+2)}\leq g_{C(i)}-g_{C(i+1)}\quad (i\in\N),$$
by applying a discussion similar to the following one:\\
For any $p$-power torsion abelian group $A$, the natural homomorphism
$$p\times\colon A[p^{i+2}]/A[p^{i+1}]\to A[p^{i+1}]/A[p^{i}]$$
is injective, where $A[p^{j}]$ is the subgroup of $p^{j}$-torsion elements of $A$
(cf.\,Corollary \ref{main1cor} and Remark \ref{gpgenuschange}).
In fact, a stronger inequality
$$p(g_{C(i+1)}-g_{C(i+2)})\leq g_{C(i)}-g_{C(i+1)}$$
holds.
As in Corollary \ref{geqp}, we can show this inequality for a geometrically integral discrete valuation ring over $K$ under some mild conditions, by using the elementary theory of discrete valuation rings.
Moreover, this inequality can be improved by using an invariant $q(x)$, which we will introduce later (see Theorem \ref{mainineq}
and Proposition \ref{Bgeqp}).

As we have just seen, the genus changes decrease step-by-step.
On the other hand, the genus changes $g_{C(i+1)}-g_{C(i)}$ can be written as the sum of the local genus changes, which are the products of the $\delta$-invariants and the degrees of the extension of the residue fields, of all the singular points of the curve $C(i)\times_{\Spec K^{1/p^{i}}}\Spec K^{1/p^{i+1}}$.
In the second part of this paper, we study the step-by-step behavior of local genus changes in more detail.
We define invariants $q(x)$ for the local rings of singular points of such curves, which enable us to calculate the $\delta$-invariants and the conductors of the local rings.
In fact, the invariants $q(x)$ will be defined for very general discrete valuation rings $R$ over $K$ (see the settings before Notation-Definition \ref{deltaconddef}).
In Introduction, we limit ourselves to the case where $R$ is essentially of finite type over $K$.

\begin{thm}[See Theorem \ref{q} for the precise statement]
Let $R$ be a discrete valuation ring geometrically integral and essentially of finite type over $K$, $v$ the normalized valuation of $R$, $x$ an element of $K$ satisfying $x^{1/p}\notin K$, $R(1_{x})$ the normalization of $R\otimes_{K}K(x^{1/p})$ in its field of fractions, $\fm(1_{x})$ the maximal ideal of $R(1_{x})$, $e$ the ramification index of $R(1_{x})$ over $R$, and $q(x)$ the natural number
$$\sup_{r\in R}v(r^{p}-x).$$
Suppose that the residue field of $R$ contains $x^{1/p}$.
Write $\cC$ for the conductor of $R\otimes_{K}K(x^{1/p})$ and $\delta$ for the $\delta$-invariant of $R\otimes_{K}K(x^{1/p})$, i.e., the length of $R(1_{x})/R\otimes_{K}K(x^{1/p})$ as an $R(1_{x})$-module.
Then the inequalities $1\leq q(x)<\infty$ hold, and
$e=p$ if and only if $q(x)$ is not divisible by $p$.
Moreover, the following hold:
\begin{description}
\item[The case of $e=p$]
We have
\begin{equation*}
\cC=\fm(1_{x})^{(p-1)(q(x)-1)},\quad
\delta=\frac{(p-1)(q(x)-1)}{2}.
\end{equation*}
\item[The case of $e=1$]
We have
\begin{equation*}
\cC=\fm(1_{x})^{\frac{(p-1)q(x)}{p}},\quad
\delta=\frac{(p-1)q(x)}{2}.
\end{equation*}
\end{description}
\label{intromain2}
\end{thm}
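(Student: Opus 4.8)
The plan is to realize $A:=R\otimes_{K}K(x^{1/p})$ as $R[s]$ with $s:=x^{1/p}$, so that $A=R[T]/(T^{p}-x)$ is free of rank $p$ over the principal ideal domain $R$, and to study it through its normalization $B:=R(1_{x})$, which is a discrete valuation ring, finite and free of rank $p$ over $R$ (as $R$ is excellent). Let $w$ be the normalized valuation of $B$, let $\pi$ be a uniformizer of $R$, and let $k$ be the residue field of $R$; thus $w|_{R}=e\cdot v$ and $\fm(1_{x})^{e}=\pi B$. The whole computation is driven by the identity, valid because $B$ has characteristic $p$, $w(r^{p}-x)=w((r-s)^{p})=p\,w(r-s)$ for $r\in R$; combined with $w(r^{p}-x)=e\cdot v(r^{p}-x)$ this gives $v(r^{p}-x)=\tfrac{p}{e}\,w(s-r)$, whence $q(x)=\tfrac{p}{e}\,\mu$ with $\mu:=\sup_{r\in R}w(s-r)$. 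Note also that $A$ and $R$ share the residue field $k$, so the relevant lengths are computed by $\dim_{k}$.

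First I would settle $1\le q(x)<\infty$ and the dichotomy. The hypothesis $x^{1/p}\in k$ yields $r_{0}\in R$ with $v(r_{0}^{p}-x)\ge 1$, so $q(x)\ge 1$. For finiteness, the $\pi B$-adic and $\fm(1_{x})$-adic topologies on $B$ coincide, and by the Krull intersection theorem $R$ is closed in the finite $R$-module $B$; hence $\mu=\infty$ would force $x^{1/p}\in R\subseteq\Frac R$, contradicting that $\Frac R/K$ is separable (geometric integrality) while $x^{1/p}$ is purely inseparable over $K$. In particular the supremum $\mu$ is attained. Since $B$ is local and free of rank $p$ over $R$, the identity $ef=\dim_{k}(B/\pi B)=p$ forces $e\in\{1,p\}$. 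If $e=1$ then $q(x)=p\mu$ is divisible by $p$; if $e=p$ then $f=1$, and I claim $p\nmid\mu$: taking $r_{0}$ with $w(s-r_{0})=\mu$, if $p\mid\mu$ then $(s-r_{0})/\pi^{\mu/p}$ is a unit of $B$ whose residue lies in $k(1_{x})=k$, and lifting this residue to $R$ contradicts the maximality of $\mu$. As $e\in\{1,p\}$, this proves $e=p\iff p\nmid q(x)$.

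After a translation $s\mapsto s-r_{0}$ (replacing $x$ by $x-r_{0}^{p}$, which changes neither $A$, $B$, nor $q(x)$) I may assume $w(s)=\mu=\max_{r}w(s-r)$. In the case $e=p$ we have $p\nmid\mu$, so the monomials $\pi^{j}s^{i}$ ($0\le i\le p-1$) attain pairwise distinct valuations modulo $p$, and a leading-term computation identifies the value semigroup of $A=\bigoplus_{i=0}^{p-1}Rs^{i}$ in $B$ with the numerical semigroup $\langle p,\mu\rangle$. Because $f=1$, passing to associated graded modules gives $\delta=\dim_{k}(B/A)=\#(\N\setminus\langle p,\mu\rangle)=\tfrac{(p-1)(\mu-1)}{2}$, while $\cC$, being the largest ideal of $B$ contained in $A$, equals $\fm(1_{x})^{c}$ with $c$ one more than the Frobenius number $p\mu-p-\mu$ of $\langle p,\mu\rangle$, i.e. $\cC=\fm(1_{x})^{(p-1)(\mu-1)}$; since $\mu=q(x)$ here, this is the asserted formula.

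In the case $e=1$ we have $\mu=q(x)/p$ and $w(\pi)=1$, so $\theta:=s/\pi^{\mu}$ is a unit of $B$ with $\theta^{p}=x/\pi^{p\mu}\in R^{\times}$ and $\overline{\theta}\notin k$ (else $\mu$ would not be maximal); by Nakayama's lemma $1,\theta,\dots,\theta^{p-1}$ is an $R$-basis of $B$, realizing the residue extension as $k(1_{x})=k(\overline{\theta})$, generated by $\overline{\theta}$ and not by the reduction of $x^{1/p}$, which already lies in $k$ by hypothesis. Since $A=R[\pi^{\mu}\theta]=\bigoplus_{i=0}^{p-1}\pi^{i\mu}R\theta^{i}$ sits inside $B=\bigoplus_{i=0}^{p-1}R\theta^{i}$, we read off $B/A\cong\bigoplus_{i=0}^{p-1}R/\pi^{i\mu}R$, whence $\delta=\sum_{i=0}^{p-1}i\mu=\tfrac{p(p-1)\mu}{2}=\tfrac{(p-1)q(x)}{2}$ and $\cC=\fm(1_{x})^{(p-1)\mu}=\fm(1_{x})^{(p-1)q(x)/p}$. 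The main obstacle, beyond the bookkeeping, is the dichotomy together with pinning down the $R$-module structure of $B$ in each case: the value-semigroup description and the equality $\delta=\dim_{k}(B/A)$ rely on $f=1$, and one must be careful that $\cC$ is computed as a power of $\fm(1_{x})$ rather than of an ideal of $A$. As a consistency check I would verify the Gorenstein relation $2\dim_{k}(B/A)=\dim_{k}(B/\cC)$ for the hypersurface $A$ in both cases.
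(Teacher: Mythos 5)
Your proposal is correct and follows essentially the same route as the paper: the dichotomy via the ``improve the lift unless $p$ divides the valuation'' argument is the paper's Lemmas \ref{q<q} and \ref{r-x}, the $e=p$ case is the Frobenius coin problem for the value semigroup $\langle p,q(x)\rangle$ (the paper's Lemma \ref{coin}, reached there via the Cohen structure theorem rather than your associated-graded/value-semigroup phrasing), and the $e=1$ case uses the same explicit $R$-basis $1,\theta,\dots,\theta^{p-1}$ with $\theta=(s-r_{0})/\varpi^{q(x)/p}$ as the paper's Lemma \ref{choiceofr} and the proof of Theorem \ref{q}.4. The only deviations are cosmetic (working with $\mu=\sup_{r}w(s-r)$ instead of $q_{r}=v(r^{p}-x)$, and Krull intersection in $B$ instead of passing to completions for finiteness of $q(x)$).
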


This theorem gives another proof of Tate's genus change formula.
This theorem also gives a generalization of the main theorem of \cite{PW}.
To prove Theorem \ref{intromain2}, we study the structure of the completion of $R(1_{x})$ by using combinatorial techniques.
For example, in the case of $e=p$, we use the classical combinatorial problem, the so-called Frobenius coin problem.
By applying Theorem \ref{intromain2}, we can calculate more precise step-by-step behavior of genus changes of curves (see Theorem \ref{mainineq} and Proposition \ref{Bgeqp}).
We note that we can obtain the relation between conductor $\cC$ and $\delta$,
$$2\length_{R}(R(1_{x})/\cC)=\delta,$$
by Theorem \ref{intromain2} (cf.\,Remark \ref{conductorvsgenuschange}).
As in Remark \ref{conductorvsgenuschange}, this equality follows from theory of dualizing complexes.

Theorem \ref{intromain2} also has an application to the study of the Jacobian number of a curve, which is another invariant for singularities of the curve.
Jacobian numbers have been studied by many researchers, and they have a significant role in studies of singular curves (see
\cite{Buchweitz1980},
\cite{Esteves2003},
\cite{Greuel2007},
\cite{IIL},
and \cite{Tjurina1969}).
Partitioning Jacobian numbers by considering partial field extensions of $K^{1/p}/K$,
we can link Jacobian numbers to the invariants $q(x)$ which we defined in Theorem \ref{intromain2}.
Then we have the following nontrivial relation between Jacobian numbers and genus changes:

\begin{thm}[see Theorem \ref{genjacob} for the precise statement]
Suppose that $[K:K^{p}] < \infty.$ Let $R$ be a discrete valuation ring geometrically integral and essentially of finite type over $K$.
Then we have 
\[
\frac{g_{10}}{(p-1)/2} = \frac{\jac (R)}{p}.
\]
Here, $\jac (R)$ is the Jacobian number of $R$, and $g_{10}$ is the local genus change $\dim_{K^{1/p}}(R(1)/R \otimes_{K} K^{1/p})$, where $R(1)$ is the normalization of the base change $R \otimes_{K}K^{1/p}$ in its field of fractions.
\end{thm}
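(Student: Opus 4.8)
\emph{Reduction of the Jacobian number to differentials.} The plan is to express both invariants through the module of K\"ahler differentials and then match them term by term via Theorem \ref{q}. Since $R$ is geometrically integral over $K$, the fraction field $F=\Frac(R)$ is separable over $K$, so $\Omega_{R/K}$ has generic rank $\mathrm{trdeg}_{K}F$ and its torsion submodule $\Omega_{R/K}^{\tor}$ is supported at $\fm$; I expect $\jac(R)=\dim_{K}\Omega_{R/K}^{\tor}$, the Jacobian ideal cutting out the non-smooth locus being the Fitting ideal that detects this torsion, with finiteness guaranteed by $[K:K^{p}]<\infty$ (in a worked degree-$p$ example one finds $\jac=\dim_{K}(R/J)=p$ against $g_{10}=(p-1)/2$, confirming the ratio $2p/(p-1)$). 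Because $\Omega_{R/K}\otimes_{K}K^{1/p}=\Omega_{(R\otimes_{K}K^{1/p})/K^{1/p}}$ and torsion commutes with the flat base change $K^{1/p}/K$, I may compute over $K^{1/p}$ as well; this already exhibits $\jac(R)$ and $g_{10}$ as two invariants of the single inclusion $R\otimes_{K}K^{1/p}\hookrightarrow R(1)$.

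\emph{The tower and two additivity statements.} Fix a $p$-basis $t_{1},\dots,t_{n}$ of $K$ over $K^{p}$ (so $n=\log_{p}[K:K^{p}]$) and form the tower $K=K_{0}\subset K_{1}\subset\cdots\subset K_{n}=K^{1/p}$ with $K_{i}=K_{i-1}(t_{i}^{1/p})$, each step purely inseparable of degree $p$ and with $t_{i}^{1/p}\notin K_{i-1}$. Writing $R_{i}$ for the normalization of $R\otimes_{K}K_{i}$ (so $R_{0}=R$ and $R_{n}=R(1)$), I would establish $g_{10}=\sum_{i=1}^{n}g^{(i)}$ and $\jac(R)=\sum_{i=1}^{n}J^{(i)}$, where $g^{(i)}$ and $J^{(i)}$ are the local genus change and Jacobian contribution at the $i$-th step. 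The genus-change additivity comes from telescoping lengths along the increasing chain of images of $R_{i}\otimes_{K_{i}}K^{1/p}$ inside $R(1)$, after checking via the normality criterion of Corollary \ref{geomnor} that $R_{i}$ is the right object at each stage; the additivity of $\jac$ comes from the filtration of $\Omega_{R/K}^{\tor}$ induced by the successive adjunctions of $dt_{1},\dots,dt_{n}$, each contributing one torsion block. This reduces the theorem to the single-step ($n=1$) statement applied over each base $K_{i-1}$.

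\emph{The per-step computation.} At the $i$-th step I apply Theorem \ref{q} with base field $K_{i-1}$, ring $R_{i-1}$ and element $x=t_{i}$, using $[K_{i-1}:K_{i-1}^{p}]=[K:K^{p}]<\infty$ and $t_{i}^{1/p}\notin K_{i-1}$. Setting $q_{i}:=q(t_{i})$, Theorem \ref{q} gives the $\delta$-invariant $\tfrac{(p-1)q_{i}}{2}$ when $e_{i}=1$ (i.e. $p\mid q_{i}$) and $\tfrac{(p-1)(q_{i}-1)}{2}$ when $e_{i}=p$. The parallel computation of $J^{(i)}$ is driven by the identity $0=dt_{i}=-d\epsilon$ in $\Omega_{R_{i-1}/K_{i-1}}$, where $t_{i}=r^{p}-\epsilon$ for an optimal $r$ with $v(\epsilon)=q_{i}$: writing $\epsilon=\pi^{q_{i}}\cdot(\text{a unit})$ for a uniformizer $\pi$, the relation $d(\pi^{q_{i}}\cdot(\text{unit}))=0$ isolates a single torsion block whose length is pinned by $q_{i}$ (the two cases $p\mid q_{i}$ and $p\nmid q_{i}$ behaving differently because $q_{i}\cdot(\text{unit})$ is zero or a unit in characteristic $p$). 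The upshot is that, up to a common residue-degree multiplicity that cancels in the ratio, the contributions are $J^{(i)}=pq_{i}$ and $p(q_{i}-1)$ respectively, that is $J^{(i)}=\tfrac{2p}{p-1}g^{(i)}$; summing over $i$ yields
\[
\jac(R)=\sum_{i=1}^{n}J^{(i)}=\frac{2p}{p-1}\sum_{i=1}^{n}g^{(i)}=\frac{2p}{p-1}\,g_{10},
\]
which is the asserted identity $g_{10}/((p-1)/2)=\jac(R)/p$.

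\emph{Main obstacle.} The crux is the per-step Jacobian computation together with the field-degree bookkeeping in the two additivity statements. Concretely, I must show that each adjunction $t_{i}^{1/p}$ produces one well-separated torsion block of the predicted length, with no interference across the tower, and that the residue-degree factors attached to $J^{(i)}$ and $g^{(i)}$ are the same so that they cancel to give exactly $2p/(p-1)$; the unramified case $e_{i}=1$ is the delicate one, since there the naive reading of $d\epsilon=0$ undercounts and the true block length is $pq_{i}$. This is the differential-theoretic analogue of the conductor computation behind Theorem \ref{q}, and I expect it to rest on the same combinatorics (the Frobenius coin problem when $e_{i}=p$). A secondary technical point is to secure the residue-field hypothesis of Theorem \ref{q} at each stage of the tower, or to invoke its general form, using that the relevant $p$-th roots are forced into the residue fields precisely at the genus-changing points.
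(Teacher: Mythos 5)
Your outline follows essentially the same route as the paper: reduce $\jac(R)$ to $\dim_{K}\Omega^{1,c}_{R/K,\mathrm{tor}}$ (Lemmas \ref{lemtorsion} and \ref{lemjactor}), filter along the tower $K_{i}=K_{i-1}(x_{i}^{1/p})$ built from a $p$-basis so that both invariants become sums over the steps (Lemma \ref{lemgenmain}), and match the per-step contributions $[L_{i-1}:K_{i}]\,pq$ resp.\ $[L_{i-1}:K_{i}]\,p(q-1)$ against the genus changes supplied by Theorem \ref{q}. The one obstacle you flag --- isolating the torsion block of the predicted length at each step, especially in the unramified case --- is exactly where the paper invests its effort, resolving it via an explicit presentation of the complete local ring (Theorem \ref{lemgen}) from which the single relation $dP_{1}=-S^{q}d\widetilde{u}_{1}$ (resp.\ $-qS^{q-1}dS$) and hence $\Ker f\simeq R'/(\varpi^{q})$ (resp.\ $R'/(\varpi^{q-1})$) can be read off (Lemma \ref{lemkernel}).
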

We note that this theorem clarifies the relation between the smoothness criterion in terms of genus changes (which follows from Tate's genus change theorem, see Remark \cite[Remark 1.9]{IIL}) and that in terms of Jacobian numbers (see \cite[Proposition 4.4]{IIL}).

The content of each section of this paper is as follows:
In Section \ref{affinesection}, we review elementary properties of unipotent algebraic groups over imperfect fields.
In Section \ref{Picpresection}, we study the structure of the Picard schemes of regular geometrically integral curves over imperfect fields.
In Section \ref{geomnormalsection}, we give some elementary lemmas on integral domains over fields of positive characteristics.
In Section \ref{main1section}, we explain structures of $p$-power torsion subgroups of the Picard schemes of regular geometrically integral curves and give the proof of Theorem \ref{intromain2}.
In Section \ref{simplecase}, we give calculations of the conductors and certain local variants of ``genus changes'' of curves which are called $\delta$-invariants.
In Section \ref{eg}, we give examples of discrete valuation rings to show that any behavior of ramification indices appearing in Theorem \ref{mainineq} actually occurs.
In Section \ref{jac}, we define Jacobian numbers and relate them to differential modules.
Moreover, we study the behavior of differential modules via step-by-step field extensions to show Theorem \ref{genjacob}.
\\

\noindent\textbf{Acknowledgements.}
The authors are deeply grateful to Naoki Imai, the supervisor of the second author, for deep encouragement and helpful advice.
They would like to thank Teruhisa Koshikawa for informing them of the arguments used in Lemma \ref{splitvector}.
They are also grateful to Hiromu Tanaka, Tetsushi Ito, Kazuhiro Ito, and Makoto Enokizono for their helpful comments and suggestions.
Moreover, they would like to thank Yuya Matsumoto for helpful comments (especially on the proof of Lemma \ref{coin}).

The first author is supported by JSPS KAKENHI Grant Number 21J00489.
He is also supported by the Iwanami-Fujukai Foundation.
He is also supported by Foundation of Research Fellows, The Mathematical Society of Japan.
The second author is supported by JSPS KAKENHI Grant Number 19J22795.

\subsection*{Notation}
In this paper, all rings are commutative.
Let $p$ be a prime number.
For an algebra $A$ over $\F_{p}$, we write $\Frob_{A}$ for the Frobenius map $A\to A: a\mapsto a^{p}$.
For a ring $R$, we write $\Frac(R)$ for the total ring of fractions of $R$.
For a field $K$ and a scheme $X$ over $K$, we shall say that $X$ is a curve (over $K$) if $X$ is an integral scheme of dimension $1$.
We shall write $\Pic_{X/K}$ (resp.\,$\Pic_{X/K}^{0}$) for the relative Picard scheme of $X$ over $K$ (resp.\,the identity component of $\Pic_{C/K}$).

\section{Notes on algebraic groups in positive characteristics}
In this section, we review elementary properties of unipotent algebraic groups over imperfect fields.

Let $M$ be a field and $H$ an algebraic group over $M$.
Recall that $H$ is called a vector group if $H$ is isomorphic to the product of finite copies of $\mathbb{G}_{a,M}$ over $M$.
We shall say that a smooth connected solvable algebraic group $H$ over $M$ is $M$-split if $H$ admits a composition series
\begin{equation}
\begin{split}
H=H_{0}\supset H_{1}\supset\ldots\supset H_{n}=1
\label{splitdef}
\end{split}
\end{equation}
consisting of smooth closed algebraic subgroups of $H$ such that $H_{i+1}$ is normal in $H_{i}$ and the quotient $H_{i}/H_{i+1}$ is $M$-isomorphic to $\G_{a}$ or $\G_{m}$ for all $0\leq i<n$ (cf.\,{\cite[Appendix B]{CGP}} and Remark \ref{whyassumption}). 
(Note that $H_{i}$ is not necessarily normal in $H$.
On the other hand, in the definition of $M$-splitness in {\cite[Examples 12.3.5. (3)]{Sp}}, $H_{i}$ is suppose to be normal in $H$.
However, we can use results of {\cite{Sp}} because we mainly treat commutative algebraic groups.)
If $M$ is perfect, every connected (commutative) smooth unipotent algebraic group is $M$-split by {\cite[Corollary 14.3.10]{Sp}}.

\begin{rem}
In this paper, we follow the definition of $M$-splitness of {\cite[Appendix B]{CGP}}.
Since an extension of connected (resp.\,smooth) algebraic groups is also connected (resp.\,smooth), an algebraic group admitting a sequence $(\ref{splitdef})$ over $M$ is connected (resp.\,smooth).
Hence, we do not need to assume that $H$ is smooth, connected, and even solvable.
\label{whyassumption} 
\end{rem}

\begin{lem}
Suppose that $H$ is connected, smooth, commutative, and $p$-torsion.
Then $H$ is an $M$-split algebraic group if and only if $H$ is a vector group.
\label{splitvector}
\end{lem}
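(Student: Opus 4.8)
The statement is an equivalence, and the implication \emph{vector group $\Rightarrow$ $M$-split} is immediate: if $H\cong\G_a^{n}$, then the chain of coordinate subgroups $\G_a^{n}\supset\G_a^{n-1}\supset\cdots\supset 1$ is a composition series of the form $(\ref{splitdef})$ with every successive quotient $M$-isomorphic to $\G_a$. So the content is the converse, and the plan is to prove it by induction on $\dim H$. The first thing I would record is how the $p$-torsion hypothesis is used: in any composition series $(\ref{splitdef})$ for $H$, each $H_i/H_{i+1}$ is a subquotient of the $p$-torsion group $H$, hence itself killed by $p$; since $[p]$ on $\G_m$ is $x\mapsto x^{p}$ and not the identity, no quotient $H_i/H_{i+1}$ can be $\G_m$. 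Thus every quotient is $\G_a$, the group $H$ is unipotent, and in particular $H_{n-1}\cong\G_a$.

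For the inductive step, set $Q:=H/H_{n-1}$. The subgroups $H_i/H_{n-1}$ form a composition series of $Q$ with all quotients $\G_a$, so $Q$ is $M$-split; being a quotient of $H$ it is also smooth, connected, commutative and $p$-torsion, of dimension $n-1$. By induction $Q\cong\G_a^{n-1}$, and $H$ becomes an extension
\[
0\to\G_a\to H\to\G_a^{n-1}\to 0,
\]
whose class lies in $\Ext^1(\G_a^{n-1},\G_a)\cong\bigoplus_{j=1}^{n-1}\Ext^1(\G_a,\G_a)$, the isomorphism being restriction along the coordinate inclusions. Pulling back along the $j$-th inclusion $\G_a\hookrightarrow\G_a^{n-1}$ produces a closed subgroup of $H$, hence a \emph{$p$-torsion} extension of $\G_a$ by $\G_a$. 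Consequently it suffices to prove that every $p$-torsion extension of $\G_a$ by $\G_a$ splits; granting this, all $n-1$ restrictions vanish, the total class vanishes, and $H\cong\G_a^{n}$.

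This last point is the crux, and it is where characteristic $p$ genuinely enters. Given $0\to\G_a\to E\to\G_a\to 0$, the underlying scheme is $\mathbb{A}^{2}$ (a $\G_a$-torsor over $\mathbb{A}^{1}$ is trivial), so after a scheme-theoretic section the group law reads $(a,x)+(b,y)=(a+b+f(x,y),x+y)$ for a normalized symmetric polynomial $2$-cocycle $f$, and $E$ splits iff $f$ is a coboundary $g(x+y)-g(x)-g(y)$. An easy induction gives $n\cdot(a,x)=\bigl(na+\sum_{i=1}^{n-1}f(ix,x),\,nx\bigr)$, whence
\[
[p](a,x)=\Bigl(\sum_{i=1}^{p-1}f(ix,x),\ 0\Bigr),
\]
so $E$ is $p$-torsion exactly when $\sum_{i=1}^{p-1}f(ix,x)$ vanishes identically. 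I would then invoke the classification of symmetric $2$-cocycles for $\G_a$ (equivalently, the computation of $\Ext^1(\G_a,\G_a)$): modulo coboundaries $f$ is a combination of the Witt cocycles $C_{p^{m}}(x,y)=\tfrac1p\bigl((x+y)^{p^{m}}-x^{p^{m}}-y^{p^{m}}\bigr)$, and a short telescoping computation yields $\sum_{i=1}^{p-1}C_{p^{m}}(ix,x)=-x^{p^{m}}\neq 0$. Hence the above vanishing forces every coefficient to be zero, $f$ is a coboundary, and $E$ splits.

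The main obstacle is precisely this cocycle input, namely $\Ext^1(\G_a,\G_a)[p]=0$: it is what distinguishes the present situation from the $k$-wound forms of $\G_a^{2}$ over imperfect fields, which are $p$-torsion but not vector groups. Note that the $M$-split hypothesis is used only to guarantee that $H$ is an iterated extension of copies of $\G_a$ (ruling out such wound forms), after which the $\Ext$-vanishing upgrades ``iterated extension of $\G_a$ plus $p$-torsion'' to ``vector group''; everything else is formal. One could alternatively cite the computation of $\Ext^1(\G_a,\G_a)$ from Demazure--Gabriel rather than reproving it.
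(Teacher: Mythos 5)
Your proof is correct, but it takes a genuinely different route from the paper's. The paper invokes the structure theory of $p$-torsion commutative unipotent groups from {\cite[Appendix B, Lemma 2.5]{CGP}}: $H$ decomposes canonically as $V\times W$ with $V$ a vector group and $W$ wound (admitting no nontrivial homomorphism from $\G_{a}$), and then a short downward induction along the composition series $(\ref{splitdef})$ --- each successive quotient $H_{i}/H_{i+1}\simeq\G_{a}$ must map trivially to $W$ --- gives $H=V$. You instead induct on $\dim H$ and reduce to the assertion that every $p$-torsion extension of $\G_{a}$ by $\G_{a}$ splits, proved via the cocycle description of the group law and the classification of symmetric $2$-cocycles. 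Both arguments isolate the same phenomenon (the $M$-split hypothesis is exactly what rules out wound forms), but yours trades the black box of {\cite{CGP}} for the classical computation of $\Ext^{1}(\G_{a},\G_{a})$ (Lazard's comparison lemma, or Demazure--Gabriel), and has the merit of making explicit where the $p$-torsion hypothesis enters, through $[p](a,x)=(\sum_{i=1}^{p-1}f(ix,x),0)$. Two points to tidy up. First, when you quote the classification you need the degree-wise statement that the homogeneous symmetric $2$-cocycles of degree $p^{m}$ form a one-dimensional space spanned by $C_{p^{m}}$ modulo coboundaries; a priori one could imagine several independent classes in a single degree (e.g.\ $C_{p}(x,y)^{p^{m-1}}$ and $C_{p}(x^{p^{m-1}},y^{p^{m-1}})$, which in fact coincide) whose individual contributions $-x^{p^{m}}$ to $\sum_{i}f(ix,x)$ cancel in a nontrivial linear combination; the one-dimensionality is precisely what Lazard's lemma supplies, so make sure that is the form of the result you cite. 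Second, the shorthand $\Ext^{1}(\G_{a},\G_{a})[p]=0$ is abusive: $\Ext^{1}(\G_{a},\G_{a})$ is an $M$-vector space and hence literally killed by $p$ under Baer sum; what you actually prove, and what is needed, is that the obstruction map $[f]\mapsto\sum_{i=1}^{p-1}f(ix,x)$ from $\Ext^{1}(\G_{a},\G_{a})$ to the additive polynomials is injective.
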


\begin{proof}
Any vector group is a split algebraic group.
Suppose that $H$ is an $M$-split algebraic group.
By {\cite[Appendix B, Lemma 2.5]{CGP}} and the assumption that $H$ is $p$-torsion, there exists a canonical decomposition $H\simeq V\times W$, where $V$ is a vector subgroup of $H$ and $W$ is a wounded subgroup of $H$, i.e., every homomorphism from $\mathbb{G}_{a,M}$ to $W$ is trivial.
Take a composition series $H=H_{0}\supset H_{1}\supset \ldots\supset H_{n}=1$ of $H$ whose successive quotients are isomorphic to $\mathbb{G}_{a,M}$.
Inductively, it follows that $H_{i}$ is contained in $V$ for each $0\leq i \leq n$.
In particular, we have $H=V$.
\end{proof}

\begin{lem}
Let $i$ be a natural number or $\infty$.
Suppose that $H$ is connected and commutative.
Then the largest subgroup $H_{s}(i)$ of $H$ that is $p^{i}$-torsion, $M$-split, and unipotent exists.
\label{maximalvec}
\end{lem}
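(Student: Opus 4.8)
The plan is to realize $H_s(i)$ as an element of maximal dimension inside the family $\mathcal{S}$ of all closed subgroups of $H$ that are $p^{i}$-torsion, $M$-split, and unipotent, and then to show that this maximal element already contains every member of $\mathcal{S}$. The family $\mathcal{S}$ is nonempty, since the trivial subgroup lies in it, and by Remark \ref{whyassumption} every member is smooth and connected. A smooth connected group scheme over a field is normal, hence irreducible, and reduced, so each member of $\mathcal{S}$ is integral over $M$ with a well-defined finite dimension bounded by $\dim H$.

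The central step is to check that $\mathcal{S}$ is stable under forming ``sums''. Given $H_{1},H_{2}\in\mathcal{S}$, the product $H_{1}\times_{M}H_{2}$ is again $p^{i}$-torsion and unipotent, and it is $M$-split: one concatenates a composition series of $H_{1}$ (with each $\G_{a}$-quotient crossed with $H_{2}$) with a composition series of $H_{2}$. Because $H$ is commutative, the multiplication map $m\colon H_{1}\times_{M}H_{2}\to H$ is a homomorphism, so its image $H_{1}\cdot H_{2}$ is a closed subgroup of $H$ and is a quotient of $H_{1}\times_{M}H_{2}$. As a quotient it is automatically $p^{i}$-torsion, connected, smooth, and unipotent; the one nontrivial input is that a quotient of an $M$-split unipotent group is again $M$-split, which I would take from the theory of split unipotent groups in {\cite[Appendix B]{CGP}}. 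Granting this, $H_{1}\cdot H_{2}\in\mathcal{S}$ and it contains both $H_{1}$ and $H_{2}$.

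I would then choose $H_{0}\in\mathcal{S}$ with $\dim H_{0}$ maximal. For an arbitrary $H'\in\mathcal{S}$, the sum $H_{0}\cdot H'\in\mathcal{S}$ contains $H_{0}$, so $\dim(H_{0}\cdot H')\geq\dim H_{0}$, whence $\dim(H_{0}\cdot H')=\dim H_{0}$ by maximality. Since the closed immersion $H_{0}\hookrightarrow H_{0}\cdot H'$ identifies $H_{0}$ with an irreducible closed subset of the irreducible $M$-scheme $H_{0}\cdot H'$ of the same dimension, it is surjective, and as $H_{0}\cdot H'$ is reduced the immersion is an isomorphism; thus $H_{0}=H_{0}\cdot H'$ and therefore $H'\subseteq H_{0}$. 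Hence $H_{0}$ contains every member of $\mathcal{S}$, so it is the desired largest subgroup $H_{s}(i)$, which is then unique. For the case $i=\infty$ one reads $p^{\infty}$-torsion as $p$-power torsion; since $\G_{a}$ is $p$-torsion and an extension of a $p^{b}$-torsion group by a $p^{a}$-torsion group is $p^{a+b}$-torsion, every $M$-split unipotent group is killed by a power of $p$, so this condition is automatic and the argument applies verbatim.

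The main obstacle is precisely the quotient-stability of $M$-splitness for unipotent groups used in the second step. Connectedness, smoothness, unipotence, and $p^{i}$-torsion all pass to quotients by formal reasons, but $M$-splitness is genuinely delicate: it is not inherited by closed subgroups, and its preservation under quotients is the nonformal fact on which the whole ``sum is again in $\mathcal{S}$'' argument rests. This is where {\cite[Appendix B]{CGP}} is essential, and I would make sure to cite the precise statement there rather than treat it as routine.
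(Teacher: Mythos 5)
Your proposal is correct and follows essentially the same route as the paper: both form the image of the addition map $V_{1}\times V_{2}\to H$, observe it is again connected, smooth, unipotent, $p^{i}$-torsion, and $M$-split because quotients of $M$-split groups are $M$-split (the paper cites {\cite[Exercises 14.3.12. (2)]{Sp}} for this, where you point to {\cite[Appendix B]{CGP}}), and conclude by maximality of dimension. You merely spell out the dimension/irreducibility argument that the paper leaves implicit in its final sentence.
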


\begin{proof}
Let $V_{1}$ and $V_{2}$ be $M$-split $p^{i}$-torsion unipotent algebraic subgroups of $H$.
Then the image of the natural homomorphism $V_{1}\times V_{2}\to H: (v_{1},v_{2})\mapsto v_{1}+v_{2}$ is connected, smooth, and $p^{i}$-torsion.
Since $V_{1}\times V_{2}$ is $M$-split, this image is also $M$-split by {\cite[Exercises 14.3.12. (2)]{Sp}}.
Hence, the desired algebraic subgroup $H_{s}(i)$ exists.
\end{proof}

Next, we discuss the Weil restriction of $\G_{m}$.
Let $K$ be a field, $L/K$ a finite purely inseparable field extension of $K$, and $\overline{L}$ an algebraic closure of $L$.
For a scheme $X$ over $L$, write $(X)_{L/K}$ for the Weil restriction of $X$ to $K$ (if exists).
Recall that, for any scheme $T$ over $K$, we have a natural bijection
$$\Mor_{\Spec L}(T\times_{\Spec K}\Spec L, X)\simeq \Mor_{\Spec K}(T, (X)_{L/K})$$
which is functorial in $T$ and $X$.
Write $(\G_{m})_{L/K}$ for the Weil restriction of $\G_{m,L}$ to $K$ (see {\cite[Section 7.6]{BLR}}).
Then we have a natural homomorphism
$$\eta_{L/K}\colon\G_{m,K}\to (\G_{m})_{L/K}$$
over $K$ which is called the unit of $\G_{m,K}$ and a natural homomorphism
$$\varepsilon_{L/K}\colon(\G_{m})_{L/K}\times_{\Spec K}\Spec L\to\G_{m,L}$$
over $L$ which is called the counit of $\G_{m,L}$.
Note that the composite homomorphism $\varepsilon_{L/K}\circ(\eta_{L/K},\id_{\Spec L})$ coincides with $\id_{\G_{m,L}}$.
Let $L'$ be a field satisfying $K\subset L'\subset L$.
We have a canonical isomorphism $((\G_{m})_{L/L'})_{L'/K}\simeq (\G_{m})_{L/K}$.
Under this identification, it holds that
$$\eta_{L/K}=(\eta_{L/L'})_{L'/K}\circ\eta_{L'/K}.$$

\begin{lem}
The following hold:
\begin{enumerate}
\item
$(\G_{m})_{L/K}$ is connected and smooth.
\item
The sequence
$$0\to\G_{m,K}\overset{\eta_{L/K}}{\to}(\G_{m})_{L/K}\to\Coker\eta_{L/K}\to0$$
gives the extension explained in {\cite[Expos\'e XVII, Th\'eor\`eme 7.2.1]{SGA3}} (or {\cite[Theorem 9.2.2]{BLR}}).
In particular, $\Coker\eta_{L/K}$ is a connected smooth unipotent algebraic group over $K$.
\item
$(\G_{m})_{L/K}\times_{\Spec K}\Spec L$ is a split algebraic group.
\end{enumerate}
\label{forsplit}
\end{lem}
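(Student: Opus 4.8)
The plan is to reduce all three assertions to the structure of the units of a finite local algebra, using that Weil restriction commutes with base change: for any field extension $K'/K$ one has $(\G_m)_{L/K}\times_{\Spec K}\Spec K'\simeq R_{A'/K'}\G_{m,A'}$, where $A'=L\otimes_K K'$ and $R_{A'/K'}$ denotes the Weil restriction along the finite flat algebra $A'/K'$, with functor of points $T\mapsto (A'\otimes_{K'}\mathcal O_T)^{\times}$ (see \cite[Section 7.6]{BLR}). Because $L/K$ is purely inseparable, the two algebras I will use, $A=L\otimes_K\overline K$ and $A=L\otimes_K L$, are finite local with residue field $\overline K$ and $L$ respectively. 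For any finite local $k$-algebra $A$ with residue field $k$ and maximal ideal $\fm$, the structure map $k\to A$ and the residue map $A\to k$ split the functorial exact sequence $1\to U\to R_{A/k}\G_{m,A}\to\G_{m,k}\to 1$, where $U$ is the principal-units functor $T\mapsto 1+(\fm\otimes_k\mathcal O_T)$; hence $R_{A/k}\G_{m,A}\simeq\G_{m,k}\times U$. Filtering $U$ by the subgroups $1+\fm^{i}$ produces successive quotients isomorphic to $\fm^{i}/\fm^{i+1}\otimes_k\G_a$, which are vector groups, so $U$ is a $k$-split connected smooth unipotent group.

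For (1), smoothness, finiteness of type, and affineness of $(\G_m)_{L/K}$ follow from the preservation of these properties under Weil restriction along a finite locally free morphism \cite[Section 7.6]{BLR}. Connectedness is geometric, so it may be checked after base change to $\overline K$, where $(\G_m)_{L/K}\times_K\overline K\simeq\G_{m,\overline K}\times U$ is a product of connected groups and hence connected.

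For (2), I use the relation $\varepsilon_{L/K}\circ(\eta_{L/K},\id_{\Spec L})=\id_{\G_{m,L}}$ recorded above: after base change to $L$ the unit $\eta_{L/K}$ is a section of the (separated) counit $\varepsilon_{L/K}$, hence a closed immersion, and closed immersions descend along the fppf cover $\Spec L\to\Spec K$, so $\eta_{L/K}$ is a closed immersion. The quotient $\Coker\eta_{L/K}$ therefore exists and is connected and smooth, being a quotient of the connected smooth group of (1). Base changing to $\overline K$ turns $\eta_{L/K}$ into the canonical section $\G_{m,\overline K}\to\G_{m,\overline K}\times U$, so $\Coker\eta_{L/K}\times_K\overline K\simeq U$ is unipotent; unipotence descends, giving the ``in particular'' claim. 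That this exact sequence is the one of \cite[Expos\'e XVII, Th\'eor\`eme 7.2.1]{SGA3} (equivalently \cite[Theorem 9.2.2]{BLR}) is then a matter of unwinding definitions, both being built from the same unit map. For (3), base changing to $L$ gives $(\G_m)_{L/K}\times_K L\simeq\G_{m,L}\times U$ with $U$ the principal units of $A=L\otimes_K L$; since $U$ is $L$-split and $\G_{m,L}$ is split, the product carries a composition series with quotients $\G_a$ or $\G_m$ and is therefore $L$-split.

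The main obstacle is the bookkeeping around base change: I must verify that Weil restriction commutes with the relevant base changes and that $L\otimes_K\overline K$ and $L\otimes_K L$ are genuinely finite local algebras whose residue field equals the base field, since only then is the clean splitting $R_{A/k}\G_{m,A}\simeq\G_{m,k}\times U$ available. Once these points are secured, the arguments reduce to the observations that connectedness, smoothness, and unipotence are all stable under descent along field extensions, and the identification in (2) with the reference's extension is essentially formal.
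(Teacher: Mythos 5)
Your proposal is correct. The core mechanism --- that for a finite local algebra $A$ over a field $k$ with residue field $k$, the unit group functor splits as $\G_{m,k}\times U$ with $U$ the principal units, filtered by $1+\fm^{i}$ with vector-group quotients --- is exactly what the paper uses for assertion 3 (with $A=L\otimes_{K}L$ and the functors $Q_{m}$). Where you diverge is in assertions 1 and 2: the paper simply cites {\cite[Expos\'e XVII Proposition C.5.1]{SGA3}} for connectedness and smoothness, and for assertion 2 it invokes the same reference to know that $\varepsilon_{L/K}$ is an epimorphism with unipotent kernel, then uses the identity $\varepsilon_{L/K}\circ(\eta_{L/K},\id_{\Spec L})=\id_{\G_{m,L}}$ to identify $\Ker\varepsilon_{L/K}$ with $(\Coker\eta_{L/K})\times_{\Spec K}\Spec L$ and descend. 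You instead base change to $\overline{K}$, apply the same local-algebra decomposition to $L\otimes_{K}\overline{K}$, and descend connectedness and unipotence; this makes the proof self-contained at the cost of explicitly checking that Weil restriction commutes with base change and that $\eta_{L/K}$ is a closed immersion (your section-of-a-separated-morphism argument is fine, and for commutative group schemes over a field one could also just note it is a monomorphism). Both you and the paper leave the identification with the specific extension of {\cite[Expos\'e XVII, Th\'eor\`eme 7.2.1]{SGA3}} at the level of unwinding definitions, so no gap there relative to the paper's own standard.
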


\begin{proof}
Assertion 1 follows from {\cite[Expos\'e XVII Proposition C.5.1]{SGA3}}.
By {\cite[Expos\'e XVII Proposition C.5.1]{SGA3}}, $\varepsilon_{L/K}$ is an epimorphism and $\Ker \varepsilon_{L/K}$ is unipotent.
Since we have $\varepsilon_{L/K}\circ(\eta_{L/K},\id_{\Spec L})=\id_{\G_{m,L}}$, the composite homomorphism
$$\Ker\varepsilon_{L/K} \hookrightarrow (\G_{m})_{L/K}\times_{\Spec K}\Spec L\to (\Coker\eta_{L/K})\times_{\Spec K}\Spec L$$
is an isomorphism.
Thus, $(\Coker\eta_{L/K})$ is a connected smooth unipotent algebraic group over $K$, and assertion 2 holds.

Next, we show assertion 3.
Note that $L\otimes_{K}L$ is an Artin local ring with the $L$-algebra structure given by the base change of $K \rightarrow L$.
Write $\fm$ for the unique maximal ideal of $L\otimes_{K}L$ and, for any natural number $m$, $Q_{m}$ for the functor
$$(-\otimes_{L}(L\otimes_{K}L/\fm^{m}))^{\ast}\colon\Alg_{L}\to \Ab,$$
where $\Alg_{L}$ is the category of $L$-algebras, and $\Ab$ is the category of abelian groups.
Then, $Q_{m}$ is a quotient functor of $Q_{m+1}$ and $\Ker (Q_{m+1} \rightarrow Q_{m})$ is isomorphic to $(-)^{\ast}$ (resp.\,$-\otimes_{L}(\fm^{m}/\fm^{m+1})$) if $m=0$ (resp.\,$m\geq 1$).
Hence, assertion 3 holds.
\end{proof}

\begin{lem}[cf.\,{\cite[Expos\'e XVII Proposition C.5.1]{SGA3}}]
For any field extension $M/K$, the following are equivalent:
\begin{enumerate}
\item
There exists a(n automatically unique injective) $K$-algebra homomorphism $L\hookrightarrow M$.
\item
The base change of $(\G_{m})_{L/K}$ to $M$ becomes a product of a subtorus and a unipotent algebraic subgroup.
(cf.\,Remark \ref{radical})
\item
The base change of $\Coker \eta_{L/K}$ (and hence $(\G_{m})_{L/K}$) to $M$ becomes an $M$-split algebraic group.
\end{enumerate}
\label{Weilstr}
\end{lem}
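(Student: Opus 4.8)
The plan is to reduce everything to the base-changed group and then analyze precisely when its canonical maximal torus splits off. Since $L/K$ is finite, Weil restriction commutes with the base change $K\to M$ (see {\cite[Section 7.6]{BLR}}), so
$$(\G_{m})_{L/K}\times_{\Spec K}\Spec M\simeq (\G_{m})_{A/M},\qquad A:=L\otimes_{K}M,$$
where $(\G_{m})_{A/M}$ is the Weil restriction of $\G_{m,A}$ along $M\to A$. Because $L/K$ is purely inseparable, $A$ is an Artin local $M$-algebra; write $\fm_{A}$ for its maximal ideal and $F:=A/\fm_{A}$ for its residue field, a purely inseparable extension of $M$. I first record that assertion (1) is equivalent to $F=M$: an $M$-algebra homomorphism $A\to M$ is the same datum as a $K$-embedding $L\hookrightarrow M$, and such a homomorphism exists if and only if the local ring $A$ has residue field $M$. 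I also note, base-changing to $\overline{M}$ and using that $A\otimes_{M}\overline{M}=L\otimes_{K}\overline{M}$ is local with residue field $\overline{M}$, that the maximal torus of $(\G_{m})_{A/M}$ has rank $1$ and equals $\Ima\eta_{A/M}\simeq\G_{m,M}$; being commutative and connected, $(\G_{m})_{A/M}$ has a \emph{unique} maximal torus.

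For $(1)\Rightarrow(3)$, if $F=M$ then the projection $A\to F=M$ splits the structure map, so $A=M\oplus\fm_{A}$ and $(A\otimes_{M}R)^{\ast}=R^{\ast}\times(1+\fm_{A}\otimes_{M}R)$ for every $M$-algebra $R$. Hence $(\G_{m})_{A/M}\simeq\G_{m,M}\times W$, where $W$ represents $1+\fm_{A}\otimes_{M}(-)$; filtering by the subgroups $1+\fm_{A}^{i}\otimes_{M}(-)$, whose successive quotients are the vector groups attached to $\fm_{A}^{i}/\fm_{A}^{i+1}$, shows $W$ is $M$-split. As $W=\Coker\eta_{A/M}$, this is exactly assertion (3) (compare the proof of Lemma \ref{forsplit}.3). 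For $(3)\Rightarrow(2)$, write $V_{M}:=\Coker\eta_{L/K}\times_{\Spec K}\Spec M$ and consider the base change of the sequence in Lemma \ref{forsplit}.2,
$$0\to\G_{m,M}\overset{\eta_{A/M}}{\to}(\G_{m})_{A/M}\to V_{M}\to0.$$
If $V_{M}$ is $M$-split, then $\Ext^{1}(V_{M},\G_{m,M})=0$: this follows by dévissage along a composition series of $V_{M}$ with $\G_{a}$-quotients from the classical vanishing $\Ext^{1}(\G_{a},\G_{m})=0$. Thus the sequence splits and $(\G_{m})_{A/M}\simeq\G_{m,M}\times V_{M}$ is a product of a subtorus and a unipotent subgroup, which is (2).

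The substance of the lemma is $(2)\Rightarrow(1)$, and this is the step I expect to be the main obstacle, since over an imperfect field a ``wound'' unipotent complement could a priori masquerade as a splitting. Suppose $(\G_{m})_{A/M}\simeq T'\times U$ with $T'$ a subtorus and $U$ unipotent. By uniqueness of the maximal torus, $T'=\Ima\eta_{A/M}$, so the projection onto $T'$ is a retraction $r\colon(\G_{m})_{A/M}\to\G_{m,M}$ with $r\circ\eta_{A/M}=\id$. Base-change to $\overline{M}$: there $(\G_{m})_{A/M}$ becomes $\G_{m,\overline{M}}\times U_{\overline{M}}$ with $\G_{m,\overline{M}}$ the residue torus, so its character group is freely generated by the residue character $\rho\colon(\G_{m})_{A/M,\overline{M}}\to\G_{m,\overline{M}}$ (reduction modulo $\fm_{A}\otimes_{M}\overline{M}$). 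Since $\rho\circ\eta_{A/M,\overline{M}}=\id$, writing $r_{\overline{M}}=\rho^{k}$ and composing with $\eta$ forces $(\,\cdot\,)^{k}=\id$ on $\G_{m,\overline{M}}$, i.e.\ $k=1$; hence $\rho=r_{\overline{M}}$ is already defined over $M$. Finally, $\rho$ is the restriction to units of the residue homomorphism $A\otimes_{M}\overline{M}\to\overline{M}$, an $\overline{M}$-linear ring map; the descent of $\rho$ to the $M$-morphism $r$ shows the corresponding linear functional lies in $\Hom_{M}(A,M)$, and its ring-homomorphism property descends by faithfully flat descent. The resulting $M$-algebra homomorphism $A\to M$, composed with $L\to A$, is a $K$-embedding $L\hookrightarrow M$, establishing (1). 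The delicate point throughout is this last descent: one must use that $\rho$ is genuinely linear (degree one in the coordinates on $A$) to conclude that the descended function is an algebra homomorphism rather than merely a group character, and it is exactly here that the hypothesis of pure inseparability, via the structure of the local algebra $A$, is used.
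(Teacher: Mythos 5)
Your proof is correct, and for the essential implication it takes a genuinely different route from the paper. The paper deduces $(3)\Rightarrow(2)$ from \cite[Expos\'e XVII Th\'eor\`eme 6.1.1]{SGA3} and reduces $(2)\Leftrightarrow(1)$ to the statement that $L$ is the \emph{minimal field of definition} of the unipotent radical of $(\G_{m})_{L/K}$ (via {\cite[Corollaire (4.8.11)]{EGA42}}), which it verifies by mapping $(\G_{m})_{L/K}\times_{\Spec K}\Spec L'$ onto $(\G_{m})_{L/L'}$ and invoking the corollary to \cite[Expos\'e XVII Proposition C.5.1]{SGA3} asserting that the unipotent radical of the latter is not defined over $L'$ when $L'\subsetneq L$. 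You instead work throughout with the Artin local ring $A=L\otimes_{K}M$, translate (1) into the statement that $A$ has residue field $M$, prove $(3)\Rightarrow(2)$ by d\'evissage from $\Ext^{1}(\G_{a},\G_{m})=0$ (valid over any field, e.g.\ because a normalized symmetric $2$-cocycle $\mathbb{A}^{2}\to\G_{m}$ is an invertible polynomial, hence a constant equal to $1$), and prove the hard direction $(2)\Rightarrow(1)$ by identifying the unique maximal torus with $\Ima\eta_{A/M}$, noting that a product decomposition yields a character retracting $\eta_{A/M}$, computing over $\overline{M}$ that the character group is infinite cyclic generated by the residue character $\rho$ so that this retraction must base-change to $\rho$, and descending the linear functional underlying $\rho$ to an $M$-algebra homomorphism $A\to M$. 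This buys a self-contained treatment that avoids both the field-of-definition formalism and the SGA3 corollary, at the cost of the final descent step being only sketched; to nail it down, observe that $r^{\ast}(t)$ lies in $\Gamma((\G_{m})_{A/M},\cO_{(\G_{m})_{A/M}})=\mathrm{Sym}_{M}(A^{\vee})[1/N]$ ($N$ the norm form) and becomes the linear form $\pi\in A^{\vee}\otimes_{M}\overline{M}$ after base change, whence it already lies in $A^{\vee}$, and the multiplicativity and unitality of the resulting functional $A\to M$ are identities that may be checked after the injective extension $M\hookrightarrow\overline{M}$.
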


\begin{proof}
The implication 3$\Rightarrow$2 follows from \cite[Expos\'e XVII Th\'eor\`eme 6.1.1.]{SGA3}, and the implication 1$\Rightarrow$3 follows from Lemma \ref{forsplit}.3.
Therefore, it suffices to show that 1$\Leftrightarrow$2.
As explained in the discussion before Lemma \ref{Weilstr}, $\varepsilon_{L/K}\circ(\eta_{L/K},\id_{\Spec L})$ coincides with $\id_{\G_{m,L}}$.
To show the equivalence, it suffices to show that $L$ is the minimum field that the unipotent radical of $(\G_{m})_{L/K}$ is defined over (cf.\,{\cite[Corollaire (4.8.11)]{EGA42}}).
Let $L'$ be a field satisfying $K\subset L'\subsetneq L$.
By the implication 1$\Rightarrow$2, it suffices to show that the unipotent radical of $(\G_{m})_{L/K}$ is not defined over $L'$.
Since we have $((\G_{m})_{L/L'})_{L'/K}\simeq (\G_{m})_{L/K}$, the natural morphism
$$(\G_{m})_{L/K}\times_{\Spec K}\Spec L'\to (\G_{m})_{L/L'}$$
is an epimorphism by {\cite[Expos\'e XVII Proposition C.5.1]{SGA3}}.
Since the unipotent radical of $(\G_{m})_{L/L'}\times_{\Spec L'}\Spec \overline{L}$ is not defined over $L'$ by {\cite[Expos\'e XVII Corollaire (a) to Proposition C.5.1]{SGA3}}, $(\G_{m})_{L/K}\times_{\Spec K}\Spec L'$ cannot be a product of a subtorus and a unipotent algebraic subgroup.
Hence, we obtain the desired equivalence.
\end{proof}

\begin{rem}
Note that condition 2 in Lemma \ref{Weilstr} is equivalent to the condition that the unipotent radical of $(\G_{m})_{L/K}\times_{\Spec K}\Spec \overline{M}$ is defined over $M$, where $\overline{M}$ is an algebraic closure of $M$.
In the definition of the unipotent radical of an algebraic group $G$ given in the discussion after {\cite[Expos\'e XV D\'efinition 6.1.\,ter.]{SGA3}}, the coefficient field of $G$ is assumed to be an algebraically closed field $k$.
If one defines the unipotent radical of $G$ to be the maximal connected unipotent normal algebraic subgroup of $G$ (in the case where $k$ is not necessarily algebraically closed), condition 2 in Lemma \ref{Weilstr} is equivalent to the condition that $(\G_{m})_{L/K}\times_{\Spec K}\Spec M$ is canonically isomorphic to the direct product of its unipotent radical and its maximal subtorus.
\label{radical}
\end{rem}

\label{affinesection}

\section{The Picard schemes of proper regular geometrically integral  curves}
In this section, we study the structure of the Picard schemes of regular proper geometrically integral curves over imperfect fields.

Let $K$ (resp.\,$K^{\sep}$; $\overline{K}$) be a field (resp.\,a separable closure of $K$; an algebraic closure of $K$).

\begin{lem}
Let $C_{1}$ and $C_{2}$ be proper geometrically integral curves over $K$ and $\alpha\colon C_{1}\to C_{2}$ a universally homeomorphic birational morphism.
Then the natural homomorphism $\Pic_{C_{2}/K}\to\Pic_{C_{1}/K}$ is an epimorphism.
\label{surj}
\end{lem}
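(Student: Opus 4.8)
The plan is to realize the cokernel of $\Pic_{C_2/K}\to\Pic_{C_1/K}$ as living inside a cohomology group attached to the finite locus where $\alpha$ fails to be an isomorphism, and then to show that this group vanishes on enough test rings. First note that a universally homeomorphic birational morphism of varieties is finite (it is integral and of finite type) and is an isomorphism over a dense open subset of $C_2$; hence the inclusion $\mathcal O_{C_2}\hookrightarrow\alpha_*\mathcal O_{C_1}$ of subsheaves of the constant sheaf of rational functions is an isomorphism away from a finite set $Z$ of closed points. Passing to units, I would record the short exact sequence of sheaves on $C_2$
\begin{equation*}
1\to\mathbb G_{m,C_2}\to\alpha_*\mathbb G_{m,C_1}\to\mathcal Q\to1,
\end{equation*}
where $\mathcal Q=\alpha_*\mathbb G_{m,C_1}/\mathbb G_{m,C_2}$ is supported on $Z$.

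Because $\Pic_{C_1/K}$ may be non-reduced in characteristic $p$, I would verify the epimorphism property through Artinian test rings rather than on geometric points alone. First I would reduce to the case $K=\overline K$: the formation of $\Pic_{C_i/K}$ commutes with the base change $\overline K/K$, the morphism $\alpha_{\overline K}$ is again finite and birational (the curves $C_{i,\overline K}$ stay integral since $C_i$ is geometrically integral), and an epimorphism of fppf sheaves may be tested after the faithfully flat base change $\Spec\overline K\to\Spec K$. Over $\overline K$ both curves acquire rational points, so by \cite[Chapter 8]{BLR} the relative Picard functor is already an fppf sheaf and $\Pic_{C_i/\overline K}(A)=\Pic(C_{i,A})$ for every local Artinian $\overline K$-algebra $A$ (using $\Pic(\Spec A)=0$).

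The core of the argument is then to show, for each such $A$, that the pullback map $\Pic(C_{2,A})\to\Pic(C_{1,A})$ is surjective. Base changing the displayed sequence to $A$ and using that $\alpha_A$ is finite, so $R^1\alpha_{A*}\mathbb G_m=0$ (its stalks are Picard groups of semilocal rings, hence trivial), the long exact cohomology sequence identifies the cokernel of $\Pic(C_{2,A})\to\Pic(C_{1,A})$ with a subgroup of $H^1(C_{2,A},\mathcal Q_A)=H^1(Z_A,\mathcal Q_A')$. Here $Z_A=Z\times_K A$ is finite over the Artinian local ring $A$, hence a finite discrete topological space, so this $H^1$ vanishes and the pullback is surjective.

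Finally I would upgrade this to the scheme-theoretic statement. The scheme-theoretic image $H'$ of $\phi_{\overline K}$ is a closed subgroup scheme of $H:=\Pic_{C_{1,\overline K}/\overline K}$, and the computation just made shows $H'(A)=H(A)$ for every local Artinian $\overline K$-algebra $A$. A closed immersion inducing a bijection on $A$-points for all such $A$, detecting both the closed points and their infinitesimal neighborhoods, must be an isomorphism; hence $\phi_{\overline K}$, and therefore $\phi$, is an epimorphism. The step I expect to require the most care is precisely this passage from surjectivity on Artinian test rings to the epimorphism of group schemes: the possible non-reducedness of $\Pic_{C_1/K}$ means that surjectivity on $\overline K$-points alone is insufficient, and the infinitesimal surjectivity furnished by the vanishing of $H^1(Z_A,\mathcal Q_A')$ is exactly what compensates for this.
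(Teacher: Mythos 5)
Your proposal is correct, and the cohomological core is the same as the paper's: both proofs reduce to $K=\overline K$, pass to the exact sequence $1\to\cO_{C_2}^{\ast}\to\alpha_{\ast}\cO_{C_1}^{\ast}\to\mathcal Q\to1$ with $\mathcal Q$ supported in dimension $0$, and conclude from the vanishing of $H^1$ of such a sheaf. Where you diverge is in how you pass from surjectivity on test objects to the epimorphism of group schemes. Your premise that ``$\Pic_{C_1/K}$ may be non-reduced in characteristic $p$'' is in fact unfounded here: for a proper curve $H^2(C_1,\cO_{C_1})=0$, so $\Pic_{C_1/K}$ is smooth by {\cite[Proposition 8.4.2]{BLR}}, and the paper uses exactly this to reduce the whole lemma to surjectivity of $\Pic_{C_2/K}(\overline K)\to\Pic_{C_1/K}(\overline K)$ (citing \cite[1.71]{Milne2017}), after which a single application of the long exact sequence over $\overline K$ finishes the proof. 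You instead run the same computation over every local Artinian test ring $A$ (needing in addition $R^1\alpha_{A\ast}\G_m=0$ and the identification $\Pic_{C_i/\overline K}(A)=\Pic(C_{i,A})$ via a rational point) and then upgrade via the scheme-theoretic image, using that a closed subgroup scheme meeting all infinitesimal neighborhoods of all closed points is everything. Both arguments are sound; yours is longer and leans on the factorization of a homomorphism of group schemes through a faithfully flat quotient followed by a closed immersion, but in exchange it never invokes smoothness of the target, so it would survive in settings (e.g.\ higher-dimensional $X$ with $H^2(X,\cO_X)\neq0$) where the paper's shortcut is unavailable.
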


\begin{proof}
We may assume that $K=\overline{K}$.
Since $\Pic_{C_{1}/K}$ is smooth over $K$ by {\cite[Proposition 8.4.2]{BLR}}, it suffices to show that the natural homomorphism $\Pic_{C_{2}/K}(K)\to\Pic_{C_{1}/K}(K)$ is surjective (cf.\,\cite[1.71]{Milne2017}).
This homomorphism can be identified with the first homomorphism of the following exact sequence:
$$H^{1}(C_{2},\cO_{C_{2}}^{\ast})\to H^{1}(C_{1},\cO_{C_{1}}^{\ast})\to H^{1}(C_{1},(\alpha_{\ast}\cO_{C_{1}}^{\ast})/\cO_{C_{2}}^{\ast}).$$
Since the support of the sheaf $(\alpha_{\ast}\cO_{C_{1}}^{\ast})/\cO_{C_{2}}^{\ast}$ is $0$-dimensional, we have 
$$H^{1}(C_{1},(\alpha_{\ast}\cO_{C_{1}}^{\ast})/\cO_{C_{2}}^{\ast})=0.$$
Hence, we finish the proof of Lemma \ref{surj}.
\end{proof}

Let $C'$ be a proper geometrically integral curve over $K$ and $C$ the normalization of $C'$ in the function field of $C'$.
Suppose that the natural morphism $\pi\colon C\to C'$ is universally homeomorphic.

\begin{lem}
Suppose that $K=K^{\sep}$.
Let $S\subset C$ be a finite subset of closed points and $n\colon S\to \N$ a map.
For any subset $S'\subset S$, write $C_{S',n}$ for the curve whose underlying topological space is the same as $C$ and whose local ring at $c\in C\setminus S'$ (resp.\,$s\in S'$) is $\cO_{C,c}$ (resp.\,$K+\fm_{s}^{n(s)}$).
Here, $\fm_{s}$ is the maximal ideal of $O_{C,s}$.

\begin{enumerate}
\item
$C_{S,n}$ is a proper geometrically integral curve over $K$.
\item
We have a canonical isomorphism
$$\Ker(\Pic_{C_{S,n}/K}\to\Pic_{C/K} )\simeq\bigoplus_{s\in S} \Ker(\Pic_{C_{\{s\},n}/K}\to\Pic_{C/K}).$$
\end{enumerate}
In the following, suppose that $S=\{s_{0}\}$.
Write $C_{n(s_{0})}$ for $C_{S,n}$, $\fm$ for $\fm_{s_{0}}$, and $L$ for the residue field of $s_{0}$.
Note that we have the following sequence of curves over $K$:
$$C=C_{0}\overset{\pi_{0}}{\to} C_{1}\to\ldots\to C_{i}\overset{\pi_{i}}{\to}\ldots.$$
\begin{enumerate}
\setcounter{enumi}{2}
\item
For any positive integer $i$, we have a canonical isomorphism between an fpqc sheaf defined by $\fm^{i}/\fm^{i+1}$ and that of $\Ker(\Pic_{C_{i+1}/K}\to\Pic_{C_{i}/K}).$
In particular, $\Ker(\Pic_{C_{i+1}/K}\to\Pic_{C_{i}/K})$ is isomorphic to $\G_{a}^{[L:K]}$ over $K$.
\item
We have a canonical isomorphism
$$\Coker(\eta\colon \G_{m,K}\to (\G_{m})_{L/K}) \simeq \Ker(\Pic_{C_{1}/K}\to\Pic_{C/K}).$$
\end{enumerate}
\label{connsmunip}
\end{lem}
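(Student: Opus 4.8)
The plan is to realize every curve appearing in the statement as the target of a finite, birational, universally homeomorphic morphism out of $C$ or out of some $C_i$, and to read off each kernel from the associated unit-sheaf sequence. I would dispose of assertion~1 first. The ring $K+\fm_s^{n(s)}$ is the fiber product $\cO_{C,s}\times_{\cO_{C,s}/\fm_s^{n(s)}}K$, where $K\hookrightarrow\cO_{C,s}/\fm_s^{n(s)}$ is the image of $(K+\fm_s^{n(s)})/\fm_s^{n(s)}$; this is a conductor (Milnor) square, so $C_{S,n}$ exists as a Ferrand pushout of $C$ along the finite collection of fat points $\Spec(\cO_{C,s}/\fm_s^{n(s)})$ mapped to copies of $\Spec K$, and $\nu\colon C\to C_{S,n}$ is finite, surjective, birational, and universally homeomorphic. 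Geometric integrality follows because $C$ is geometrically integral (its function field is separable over $K$, as $C'$ is geometrically integral), so each $\cO_{C,s}$ is a geometrically reduced domain; since $\overline{K}/K$ is flat, $(K+\fm_s^{n(s)})\otimes_K\overline{K}\hookrightarrow\cO_{C,s}\otimes_K\overline{K}$ stays a subring of a reduced ring, whence $(C_{S,n})_{\overline{K}}$ is reduced, and it is irreducible as the image of the irreducible $C_{\overline{K}}$. Properness descends along the finite surjection $\nu$ from the proper curve $C$.

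For the Picard computations I would treat uniformly the finite, birational, universally homeomorphic maps $f\colon X\to Y$ given by $\nu\colon C\to C_{S,n}$, by $\pi_0\colon C\to C_1$, and by $\pi_i\colon C_i\to C_{i+1}$ ($i\ge1$). Base changing to an arbitrary $K$-scheme $T$, I would use the exact sequence of sheaves on $Y_T$
\begin{equation*}
1\to\cO_{Y_T}^{*}\to f_{*}\cO_{X_T}^{*}\to\cF_T\to1,
\end{equation*}
where $\cF$ is supported on the (finite) pinched locus. Since $f$ is finite and $\cF$ is $0$-dimensional, $R^{1}f_{*}\cO_{X_T}^{*}$ and $H^{1}(Y_T,\cF_T)$ vanish (as in the proof of Lemma~\ref{surj}), so the long exact cohomology sequence gives
\begin{equation*}
\Ker\bigl(\Pic(Y_T)\to\Pic(X_T)\bigr)\cong H^{0}(Y_T,\cF_T)/\mathrm{im}\,H^{0}(X_T,\cO_{X_T}^{*}).
\end{equation*}
By properness and geometric integrality of $X$ (assertion~1, applied to $C_i$), cohomology and base change give $H^{0}(X_T,\cO_{X_T})=\cO_T$, hence $H^{0}(X_T,\cO_{X_T}^{*})=\cO_T^{*}=\G_{m,K}(T)$; as these global units are constants, they lie in every $(K+\fm_s^{n(s)})^{*}$ and so die in $\cF_T$. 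Assertion~2 follows at once, since $\cF=\bigoplus_{s\in S}\cF_s$ is a sum of skyscrapers and the global units vanish in each summand, giving $\Ker(\Pic_{C_{S,n}/K}\to\Pic_{C/K})\cong\bigoplus_{s}\Ker(\Pic_{C_{\{s\},n}/K}\to\Pic_{C/K})$ after fpqc sheafification.

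Assertions~3 and~4 then come from the unit filtration of the DVR $A:=\cO_{C,s_0}$, namely $A^{*}\supset 1+\fm\supset 1+\fm^{2}\supset\cdots$ with $(1+\fm^{j})/(1+\fm^{j+1})\cong\fm^{j}/\fm^{j+1}$ and $A^{*}/(1+\fm)\cong L^{*}$. For $\pi_i$ with $i\ge1$ the stalk of $\cF$ is $(K+\fm^{i})^{*}/(K+\fm^{i+1})^{*}\cong(1+\fm^{i})/(1+\fm^{i+1})\cong\fm^{i}/\fm^{i+1}$ (using $K^{*}\cap(1+\fm^{i})=1$), which after base change is the vector group $(\fm^{i}/\fm^{i+1})\otimes_{K}\cO_T$, i.e. the fpqc sheaf attached to $\fm^{i}/\fm^{i+1}\cong\G_a^{[L:K]}$; this is assertion~3. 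For $\pi_0$ the stalk is $A^{*}/K^{*}(1+\fm)\cong L^{*}/K^{*}$, whose base change is $(L\otimes_{K}\cO_T)^{*}/\cO_T^{*}=(\G_m)_{L/K}(T)/\G_{m,K}(T)$; since $K=K^{\sep}$ forces $L/K$ to be purely inseparable, the Weil restriction $(\G_m)_{L/K}$ and its unit $\eta_{L/K}$ from Section~\ref{affinesection} apply, and the fpqc sheafification of this presheaf cokernel is by definition $\Coker\eta_{L/K}$, yielding assertion~4.

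The main obstacle is the functorial bookkeeping rather than any single computation. One must verify that forming $\cF$ and taking $H^{0}$ commute with arbitrary base change $T\to\Spec K$, that the stalk identifications with $\fm^{i}/\fm^{i+1}$ and with $L^{*}/K^{*}$ persist as functors (not merely on $K$-points) after tensoring with $\cO_T$, and—most importantly—that the presheaf cokernels computed above have fpqc sheafifications equal to the genuine kernels of the relative Picard schemes. This last point relies on $\Pic_{-/K}$ being itself the fpqc sheafification of $T\mapsto\Pic$, together with the vanishing $H^{1}(Y_T,\cF_T)=0$ that makes $\Pic(Y_T)\to\Pic(X_T)$ surjective functorially (Lemma~\ref{surj}).
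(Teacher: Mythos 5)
Your proposal is correct and follows essentially the same route as the paper: the unit exact sequence $1\to\cO_{Y}^{*}\to f_{*}\cO_{X}^{*}\to\cF\to1$, vanishing of $H^{1}$ of the skyscraper quotient, and the functorial identifications of $\cF$ with $\fm^{i}/\fm^{i+1}$ (via $m\mapsto 1+m$) and with $(\G_m)_{L/K}/\G_{m,K}$, which is exactly how the paper proves assertions 2--4. The only real divergences are cosmetic: you phrase the argument as presheaf cohomology over an arbitrary base $T$ followed by fpqc sheafification where the paper manipulates the fpqc sheaf sequence directly, and for assertion 1 you give a fuller Ferrand-pushout argument where the paper only verifies finite-typeness.
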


\begin{proof}
For assertion 1, we only show that $C_{S,n}$ is a scheme of finite type over $K$.
Take an affine open subscheme $U=\Spec B$ of $C$ such that $U\cap S=\{s\}$.
It suffices to show that $(K+\fm_{s}^{n(s)})\cap B$ is a $K$-algebra of finite type.
Since $B/(B\cap\fm_{s}^{n(s)})$ is a finite dimensional $K$-linear space, $B$ is finite over $(K+\fm_{s}^{n(s)})\cap B$.
Then, since $B$ is of finite type over $K$, $(K+\fm_{s}^{n(s)})\cap B$ is of finite type over $K$.

Next, we show assertion 3.
Write $f_{i+1}$ for the structure morphism $C_{i+1}\to \Spec K$.
From the exact sequence
$$0\to\cO_{C_{i+1}}^{\ast} \to\pi_{i \ast}\cO_{C_{i}}^{\ast}\to(\pi_{i\ast}\cO_{C_{i}}^{\ast})/\cO_{C_{i+1}}^{\ast} \to 0,$$
we obtain an exact sequence of fpqc sheaves over $\Spec K$
$$0\to f_{i+1\ast}((\pi_{i\ast}\cO_{C_{i}}^{\ast})/\cO_{C_{i+1}}^{\ast}) \to R^{1}(f_{i+1})_{\ast}\cO_{C_{i+1}}^{\ast} \to R^{1}f_{i\ast}\cO_{C_{i}}^{\ast}  \to 0.$$
This exact sequence of fpqc sheaves can be identified with the sequence of fpqc sheaves defined by the following exact sequence of algebraic groups over $K$:
$$0\to \Ker(\Pic_{C_{i+1}/K}\to\Pic_{C_{i}/K})\to\Pic_{C_{i+1}/K}\to\Pic_{C_{i}/K}\to 0.$$
Hence it suffices to show that the fpqc sheaf $f_{i+1, \ast}((\pi_{i\ast}\cO_{C_{i}}^{\ast})/\cO_{C_{i+1}}^{\ast})$ on $\Spec K$ is canonically isomorphic to that defined by $\fm^{i}/\fm^{i+1}$.
For any $K$-algebra $A$, we have a canonical group isomorphism
$$(\fm^{i}/\fm^{i+1})\otimes_{K}A\simeq ((K+\fm^{i})\otimes_{K}A)^{\ast}/((K+\fm^{i+1})\otimes_{K}A)^{\ast}$$
which sends $m$ to $1+m$.
Here, the right-hand group coincides with the group of sections of the quotient presheaf of $\pi_{i\ast}\cO_{C_{i}}^{\ast}$ by $\cO_{C_{i+1}}^{\ast}$.
Therefore, assertion 3 holds.

Assertion 2 follows from a similar discussion to that for assertion 3 and the fact that the quotient sheaf $O_{C}^{\ast}/O_{C_{S,n}}^{\ast}$ is isomorphic to the direct sum of the skyscraper sheaves $O_{C,s}^{\ast}/O_{C_{S,n},s}^{\ast}$ at $s$ for all $s\in S$.

Finally, we show assertion 4.
Write $R$ for the local ring $\cO_{C,s_{0}}$.
For any $K$-algebra $A$, we have a commutative diagram with exact horizontal lines
$$
\xymatrix{
0\ar[r]
&((K+\fm)\otimes_{K} A)^{\ast}\ar[r]\ar[d]
&(R\otimes_{K} A)^{\ast}\ar[r]\ar[d]
&(R\otimes_{K} A)^{\ast}/((K+\fm)\otimes_{K} A)^{\ast}\ar[r]\ar[d]
&0\\
0\ar[r]
&A^{\ast}\ar[r]
&(L\otimes_{K}A)^{\ast}\ar[r]
&(L\otimes_{K}A)^{\ast}/A^{\ast}\ar[r]
&0.
}
$$
Here, the both first and second vertical arrows are surjective and the kernels of these homomorphisms are isomorphic to $1+\fm\otimes_{K}A$.
Therefore, the third vertical arrow is an isomorphism.
Then assertion 4 follows from an argument similar to that of the proof of assertion 3.
\end{proof}

\begin{prop}[cf.\,{\cite[Proposition 9.2.9]{BLR}}]
$\Ker(\Pic_{C'/K}\to\Pic_{C/K})$ is a connected smooth unipotent algebraic group over $K$.
\label{connunip}
\end{prop}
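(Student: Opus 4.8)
\noindent
The plan is to deduce the statement from the structural computations already carried out for the special ``cusp'' curves $C_{S,n}$ in Lemma \ref{connsmunip}, by realizing $\Ker(\Pic_{C'/K}\to\Pic_{C/K})$ as a quotient of a kernel of that explicit type. First I would reduce to the case $K=K^{\sep}$. The formation of this kernel commutes with base change, and all three properties in question can be checked after the faithfully flat base change $-\times_{\Spec K}\Spec K^{\sep}$: smoothness descends along a faithfully flat extension; connectedness can be checked after base change because the kernel carries an identity section (a finite-type group scheme over a field with a rational point is connected if and only if it is geometrically connected); and unipotence may be verified over $\overline{K}$, hence over $K^{\sep}$, since unipotence is preserved under extension of the base field. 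Under this base change the hypotheses persist: $\pi_{K^{\sep}}$ is again universally homeomorphic, and $C_{K^{\sep}}$ is still the normalization of the proper geometrically integral curve $C'_{K^{\sep}}$, because normalization commutes with separable base change. Thus I may assume $K=K^{\sep}$, so that Lemma \ref{connsmunip} applies and every closed point of $C$ has residue field finite and purely inseparable over $K$.

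Next I would sandwich $C'$ between $C$ and a cusp curve. The non-normal locus of $C'$ is finite, and I transport it along the homeomorphism $\pi$ to a finite set $S\subset C$ of closed points, away from which $\pi$ is an isomorphism. At each $s\in S$ the ring $\cO_{C,s}$ is a discrete valuation ring with maximal ideal $\fm_{s}$, and the conductor of the finite birational extension $\cO_{C',\pi(s)}\subseteq\cO_{C,s}$ contains a power $\fm_{s}^{n(s)}$ with $n(s)\geq 1$; since $K\subseteq\cO_{C',\pi(s)}$ as well, this gives $K+\fm_{s}^{n(s)}\subseteq\cO_{C',\pi(s)}\subseteq\cO_{C,s}$. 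With $C_{S,n}$ the curve of Lemma \ref{connsmunip} attached to this function $n$, these inclusions of local rings furnish universally homeomorphic birational morphisms $C\to C'\to C_{S,n}$ (the universal homeomorphy being automatic, as the residue extensions are purely inseparable over $K^{\sep}$). By Lemma \ref{surj}, both $\Pic_{C_{S,n}/K}\to\Pic_{C'/K}$ and $\Pic_{C'/K}\to\Pic_{C/K}$ are epimorphisms.

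I would then compare kernels. A diagram chase with fpqc sheaves shows that the induced map $\Ker(\Pic_{C_{S,n}/K}\to\Pic_{C/K})\to\Ker(\Pic_{C'/K}\to\Pic_{C/K})$ is again an epimorphism: every local section of the target lifts along the epimorphism $\Pic_{C_{S,n}/K}\to\Pic_{C'/K}$, and the lift automatically lands in the source because its image in $\Pic_{C/K}$ vanishes. It therefore suffices to know that the source is connected, smooth, and unipotent. For this, Lemma \ref{connsmunip}.2 decomposes it as $\bigoplus_{s\in S}\Ker(\Pic_{C_{\{s\},n}/K}\to\Pic_{C/K})$, and for a single point the tower $C=C_{0}\to C_{1}\to\cdots\to C_{n(s)}$ exhibits a filtration whose successive graded pieces are $\Coker(\eta\colon\G_{m,K}\to(\G_{m})_{L/K})$ and copies of $\G_{a}^{[L:K]}$, by Lemma \ref{connsmunip}.3 and Lemma \ref{connsmunip}.4. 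These are connected, smooth, and unipotent by Lemma \ref{forsplit}.2, and an iterated extension of such groups is again of this type. Since a quotient of a connected smooth unipotent algebraic group is connected, smooth, and unipotent, this finishes the argument.

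\noindent
As for the main difficulty: the computational heart---identifying the graded pieces---has already been isolated in Lemma \ref{connsmunip}, so the two points that genuinely require care are the surjectivity of the induced map on kernels, which is the mechanism transferring the known explicit case $C_{S,n}$ to an arbitrary $C'$, and the descent of unipotence from $K^{\sep}$ back to $K$, which is forced on us because Lemma \ref{connsmunip} is only available over a separably closed base. Neither is deep, but both are indispensable to the structure of the proof.
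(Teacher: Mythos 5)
Your proof is correct and follows essentially the same route as the paper's: reduce to $K=K^{\sep}$, map $C'$ to a cusp curve $C_{S,n}$ using the conductor, apply Lemma \ref{surj} to get an epimorphism of kernels, and conclude from the explicit graded pieces in Lemma \ref{connsmunip} together with Lemma \ref{forsplit}.2. You merely supply details the paper leaves implicit (the descent of the three properties from $K^{\sep}$, the existence of $n(s)$ via the conductor, and the diagram chase for surjectivity on kernels), all of which are sound.
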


\begin{rem}
{\cite[Proposition 9.2.9]{BLR}} treats the case where $K$ is perfect and states that $\Ker(\Pic_{C'/K}\to\Pic_{C/K})$ is unipotent.
In the proof of {\cite[Proposition 9.2.9]{BLR}}, results of {\cite{Ser1}} are used and we can see that $\Ker(\Pic_{C'/K}\to\Pic_{C/K})$ is also connected and smooth.
\end{rem}

\begin{proof}[Proof of Proposition \ref{connunip}]
We may and do assume that $K=K^{\sep}$.
Write $S$ for the closed subset of $C$ where $\pi$ is not locally isomorphic.
For every $s\in S$, fix a natural number $n(s)$ such that $\fm_{s}^{n(s)}\subset \cO_{C',\pi(s)}$.
Then we have a natural morphism $C'\to C_{S,n}$ over $K$.
The induced homomorphism
$$\Ker(\Pic_{C_{S,n}/K}\to\Pic_{C/K})\to\Ker(\Pic_{C'/K}\to\Pic_{C/K})$$
is an epimorphism by Lemma \ref{surj}.
Hence, we may and do assume that $C'=C_{S,n}$.
In this case, $\Ker(\Pic_{C_{S,n}/K}\to\Pic_{C/K} )$ is a successive extension of connected smooth unipotent algebraic groups by Lemmas \ref{connsmunip}.2, \ref{connsmunip}.3, and \ref{connsmunip}.4 and Lemma \ref{Weilstr}.
Therefore, $\Ker(\Pic_{C_{S,n}/K}\to\Pic_{C/K})$ is a connected smooth unipotent algebraic group over $K$.
\end{proof}

\label{Picpresection}

\section{A geometrical normality criterion}
In this section, we introduce some notions on integral domains over fields of positive characteristics, which we use in Section \ref{simplecase}.
We also give an elementary lemma (Lemma \ref{element}) which we use in the proof of Theorem \ref{overclosed}.
To prove Theorem \ref{overclosed}, we only need Lemma \ref{element} in this section.
However, we discuss general ring theory and prove a geometrical normality criterion (cf.\,Corollary \ref{geomnor}) because the authors cannot find it in literature.

We start with reviews of some elementary facts on rings.
Let $k$ be a field and $A$ a $k$-algebra.
Recall that we say that $A$ is normal if, for any prime ideal $\fp$ of $A$, $A_{\fp}$ is an integral domain which is integrally closed in $\Frac(A_{\fp})$.
If $A$ is normal, then $A$ is reduced and integrally closed in $\Frac(A)$.
Note that the converse is true if $A$ has finitely many minimal primes.
Recall that we say that $A$ is geometrically reduced (resp.\,geometrically normal) over $k$ if, for any field extension $k'\supset k$, $A\otimes_{k}k'$ is reduced (resp.\,normal).
In the case where the characteristic of $k$ is $p>0$, $A$ is geometrically reduced (resp.\,geometrically normal) over $k$ if and only if $A\otimes_{k}k^{1/p^{\infty}}$ is reduced (resp.\,normal) by {\cite[Tag 030V and 037Z]{Stacks}} (cf.\,see also {\cite[\S4.6]{EGA42}}).

Suppose that the characteristic of $k$ is $p$.
Note that $A$ is reduced if and only if $\Frob_{A}$ is injective.
In this case, we write $A^{1/p}$ for the target $A$ of $\Frob_{A}$ if we want to identify $A$ with the image of $\Frob_{A}$.
Suppose that $A$ is geometrically reduced.
\begin{lem}
For any natural number $m$, we have natural inclusions
$$A\hookrightarrow A\otimes_{k}k^{1/p^{m}}\hookrightarrow A^{1/p^{m}}.$$
\label{injections}
\end{lem}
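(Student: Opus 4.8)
The plan is to construct the two maps and then deduce the injectivity of the second from the reducedness of $A\otimes_{k}k^{1/p^{m}}$, which is precisely what geometric reducedness provides. The first inclusion is formal: $k\hookrightarrow k^{1/p^{m}}$ is a split injection of $k$-vector spaces, so applying $A\otimes_{k}(-)$ preserves injectivity and identifies $A=A\otimes_{k}k$ with a subring of $A\otimes_{k}k^{1/p^{m}}$. For the second map, applying the $m$-fold Frobenius functor to the structure morphism $k\to A$ exhibits $A^{1/p^{m}}$ as a $k^{1/p^{m}}$-algebra; the natural map $A\to A^{1/p^{m}}$ and the map $k^{1/p^{m}}\to A^{1/p^{m}}$ then agree on $k$, so the universal property of the tensor product yields a $k$-algebra homomorphism
$$\iota\colon A\otimes_{k}k^{1/p^{m}}\to A^{1/p^{m}},\qquad a\otimes\mu\mapsto a\mu,$$
where $a\mu$ denotes the product in $A^{1/p^{m}}$ (regarding $A$ and $k^{1/p^{m}}$ as subrings of $A^{1/p^{m}}$). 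Its composite with the first inclusion is the natural map $A\to A^{1/p^{m}}$ from the discussion preceding the lemma, which is injective because $A$, being geometrically reduced, is in particular reduced.

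The substance is the injectivity of $\iota$. I would take $x=\sum_{i}a_{i}\otimes\mu_{i}$ in $\Ker\iota$, with $\mu_{i}\in k^{1/p^{m}}$, and raise it to the $p^{m}$-th power. Since the $p^{m}$-power map is a ring endomorphism in characteristic $p$ and since $\mu_{i}^{p^{m}}\in k$ for each $i$, I obtain $x^{p^{m}}=\sum_{i}a_{i}^{p^{m}}\otimes\mu_{i}^{p^{m}}=\bigl(\sum_{i}\mu_{i}^{p^{m}}a_{i}^{p^{m}}\bigr)\otimes 1$, an element of the image of the first inclusion, say $b\otimes 1$ with $b\in A$. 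Applying $\iota$ gives $\iota(b\otimes 1)=\iota(x^{p^{m}})=\iota(x)^{p^{m}}=0$; since the composite $A\to A^{1/p^{m}}$ is injective, this forces $b=0$, hence $x^{p^{m}}=0$ in $A\otimes_{k}k^{1/p^{m}}$. Finally, because $A$ is geometrically reduced over $k$, the base change $A\otimes_{k}k^{1/p^{m}}$ is reduced, so $x^{p^{m}}=0$ yields $x=0$, proving injectivity.

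The step I would flag as the crux, rather than a genuine obstacle, is the very last one: reducedness of $A$ alone does not let one pass from $x^{p^{m}}=0$ to $x=0$ inside $A\otimes_{k}k^{1/p^{m}}$, and it is exactly the geometric reducedness hypothesis, applied to the field extension $k^{1/p^{m}}/k$, that guarantees $A\otimes_{k}k^{1/p^{m}}$ is reduced. Everything else is a formal manipulation of Frobenius together with the universal property of $\otimes_{k}$.
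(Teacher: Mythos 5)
Your proof is correct and rests on the same key idea as the paper's: the composite $A\to A\otimes_k k^{1/p^m}\to A^{1/p^m}$ is controlled by Frobenius, and the injectivity of the second map ultimately reduces to the reducedness of $A\otimes_k k^{1/p^m}$, which is exactly what geometric reducedness supplies. The paper phrases this in one line by identifying the composite $A\otimes_k k^{1/p^m}\to A^{1/p^m}\to A^{1/p^m}\otimes_{k^{1/p^m}}k^{1/p^{2m}}=(A\otimes_k k^{1/p^m})^{1/p^m}$ with the canonical (Frobenius-root) inclusion, whereas you carry out the same computation element-wise; both are valid.
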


\begin{proof}
Since $A\otimes_{k}k^{1/p^{m}}$ is reduced, the composite homomorphism
$$
A\otimes_{k}k^{1/p^{m}}\to A^{1/p^{m}}
\to A^{1/p^{m}}\otimes_{k^{1/p^{m}}}k^{1/p^{2m}}
=(A\otimes_{k}k^{1/p^{m}})^{1/p^{m}}
$$
is injective.
Hence, the desired injectivity for the second homomorphism holds.
\end{proof}
Note that the inclusions in Lemma \ref{injections} induce inclusions
$$\Frac(A)\hookrightarrow \Frac(A\otimes_{k}k^{1/p^{m}})\hookrightarrow \Frac(A^{1/p^{m}}).$$
This follows from the fact that an element $a\in A^{1/p^{m}}$ is regular (in $A^{1/p^{m}}$) if and only if $a^{p^{m}}\in A$ is regular (in $A$).
Moreover, it holds that
$$\Frac(A)\otimes_{k}k^{1/p^{m}}\simeq \Frac(A\otimes_{k}k^{1/p^{m}}).$$
In particular, we have
\begin{equation}
\begin{split}
(\Frac(A))\otimes_{k}k^{1/p^{\infty}}\simeq \Frac(A\otimes_{k}k^{1/p^{\infty}}).
\end{split}
\label{geomnormalfld}
\end{equation}

\begin{lem}
Let $K$ be a field of characteristic $p$, $R$ a normal $K$-algebra, and $m$ a natural number.
Suppose that $R$ is geometrically reduced over $K$.
Write $R(m)$ for the normalization of $R\otimes_{K}K^{1/p^{m}}$ in $\Frac(R\otimes_{K}K^{1/p^{m}})$.
Let $L$ be a field containing $K^{1/p^{m}}$.
Consider the following diagram:
$$
\xymatrix{
L\ar@{^{(}->}[r]
&R\otimes_{K}L\ar@{^{(}->}[r]
&R(m)\otimes_{K^{1/p^{m}}}L\ar@{^{(}->}[r]
&R^{1/p^{m}}\otimes_{K^{1/p^{m}}}L\\
K^{1/p^{m}}\ar@{^{(}->}[r]\ar@{^{(}->}[u]
&R\otimes_{K}K^{1/p^{m}}\ar@{^{(}->}[r]\ar@{^{(}->}[u]
&R(m)\ar@{^{(}->}[u]\ar@{^{(}->}[r]
&R^{1/p^{m}}\ar@{^{(}->}[u]\\
K\ar@{^{(}->}[r]\ar@{^{(}->}[u]
&R.\ar@{^{(}->}[u]
&
&
}
$$
Then we have
\begin{equation*}
\begin{split}
R(m)\otimes_{K^{1/p^{m}}}L&=\{x\in\Frac(R\otimes_{K}L)\mid x^{p^{m}}\in R\otimes_{K}L\}\\
&=(\Frac(R\otimes_{K}L))\cap(R\otimes_{K}L)^{1/p^{m}}.
\end{split}
\end{equation*}
\label{element}
\end{lem}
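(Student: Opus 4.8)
The plan is to treat the second equality as a matter of notation and to concentrate on the first. Since $R$ is geometrically reduced over $K$, the ring $A:=R\otimes_{K}L$ is reduced, so $A^{1/p^{m}}$ is defined, and $(R\otimes_{K}L)^{1/p^{m}}$ consists exactly of those elements of $\Frac(A)^{1/p^{m}}$ whose $p^{m}$-th power lies in $A$; intersecting with $\Frac(A)$ gives the middle set, so the second equality is immediate. For the first equality I set $B:=R\otimes_{K}K^{1/p^{m}}$, so that $A=B\otimes_{K^{1/p^{m}}}L$ and $R(m)$ is the normalization of $B$ in $\Frac(B)$, and I prove the two inclusions inside $\Frac(A)$.

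First the inclusion $\subseteq$. Using $\Frac(R(m))=\Frac(B)=\Frac(R)\otimes_{K}K^{1/p^{m}}$ (the displayed fraction-field identity) and flatness of $L/K^{1/p^{m}}$, I get an embedding $R(m)\otimes_{K^{1/p^{m}}}L\hookrightarrow\Frac(R)\otimes_{K}L\subseteq\Frac(A)$. A pure tensor $y\otimes\ell$ with $y\in R(m)\subseteq R^{1/p^{m}}$ satisfies $y^{p^{m}}\in R$, so its image $x$ has $x^{p^{m}}=y^{p^{m}}\ell^{p^{m}}\in R\cdot L\subseteq A$; additivity of Frobenius extends this to sums. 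Hence $R(m)\otimes_{K^{1/p^{m}}}L\subseteq\{x\in\Frac(A)\mid x^{p^{m}}\in A\}$.

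For the reverse inclusion I would first compute $A^{1/p^{m}}\cong R^{1/p^{m}}\otimes_{K^{1/p^{m}}}L^{1/p^{m}}$: this ring is reduced by geometric reducedness, its subring of $p^{m}$-th powers is $R\otimes_{K}L=A$ (Frobenius is additive, and the relations over $K^{1/p^{m}}$ collapse to those over $K$ on $p^{m}$-th powers), and it is generated over $A$ by $p^{m}$-th roots of elements of $A$. Put $C:=R^{1/p^{m}}\otimes_{K^{1/p^{m}}}L$, which is geometrically reduced over $L$; the fraction-field identity then gives $\Frac(A)^{1/p^{m}}=\Frac(A^{1/p^{m}})=\Frac(C)\otimes_{L}L^{1/p^{m}}$. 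Choosing an $L$-basis $\{e_{j}\}$ of $L^{1/p^{m}}$ with $e_{0}=1$, I have $A^{1/p^{m}}=\bigoplus_{j}Ce_{j}$, while $\Frac(A)\subseteq\Frac(C)$ (as $C$ is reduced and integral over $A$) sits in the $e_{0}$-component; comparing components yields $\Frac(A)\cap A^{1/p^{m}}\subseteq C$.

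It remains to show that an $x\in\Frac(A)\cap C$ lies in $R(m)\otimes_{K^{1/p^{m}}}L$. Here I would reduce to the case that $R$ is a normal domain (a normal ring is a finite product of normal domains), so that $\Frac(B)$ and $\Frac(R^{1/p^{m}})$ are fields with $\Frac(R^{1/p^{m}})/\Frac(B)$ purely inseparable. Writing $A':=\Frac(R)\otimes_{K}L=\Frac(B)\otimes_{K^{1/p^{m}}}L$ and $C':=\Frac(R^{1/p^{m}})\otimes_{K^{1/p^{m}}}L$, a $\Frac(B)$-basis of $\Frac(R^{1/p^{m}})$ containing $1$ exhibits $C'$ as a free $A'$-module with $1$ among the basis vectors; since $\Frac(A)=\Frac(A')$ occupies the $1$-component and $x\in C\subseteq C'$, reading off that component gives $x\in A'$, hence $x\in C\cap A'$. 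Finally, tensoring the left-exact sequence $0\to R(m)\to R^{1/p^{m}}\to\Frac(R^{1/p^{m}})/\Frac(B)$ with the flat $K^{1/p^{m}}$-module $L$ identifies $C\cap A'$ with $R(m)\otimes_{K^{1/p^{m}}}L$, completing the proof. The \emph{main obstacle} is that total fraction rings do not commute with transcendental base change, so one cannot simply base-change the defining intersection $R(m)=\Frac(B)\cap R^{1/p^{m}}$ along $L/K^{1/p^{m}}$; the two basis arguments above, together with the fraction-field identity valid for the purely inseparable extensions $k^{1/p^{m}}/k$ of geometrically reduced algebras, are exactly what is needed to pin $\Frac(A)$ to the trivial component and circumvent this.
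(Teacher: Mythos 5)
Your overall architecture is genuinely different from the paper's: the paper first proves the statement for $L=K^{1/p^{m}}$ by showing $R(m)=\Frac(R\otimes_{K}K^{1/p^{m}})\cap R^{1/p^{m}}$, then base-changes this intersection to a \emph{perfect} field $L$ (where $(R\otimes_{K}L)^{1/p^{m}}=R^{1/p^{m}}\otimes_{K^{1/p^{m}}}L$ holds on the nose), and finally descends to general $L$ along the faithfully flat extension $L\hookrightarrow L^{1/p^{\infty}}$. You instead work with an arbitrary $L$ directly and trap an element of $\Frac(R\otimes_{K}L)\cap(R\otimes_{K}L)^{1/p^{m}}$ inside $C\cap A'$ (with $B=R\otimes_{K}K^{1/p^{m}}$, $C=R^{1/p^{m}}\otimes_{K^{1/p^{m}}}L$, $A'=\Frac(B)\otimes_{K^{1/p^{m}}}L$) by two free-module component arguments. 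Much of this is sound and the first component argument, using a basis of $L^{1/p^{m}}$ over $L$, is a nice substitute for the passage to the perfect closure. There are, however, two genuine gaps.

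First, the identity $R(m)=\Frac(B)\cap R^{1/p^{m}}$, which you call ``the defining intersection'' and which is exactly the left-exactness of your sequence $0\to R(m)\to R^{1/p^{m}}\to\Frac(R^{1/p^{m}})/\Frac(B)$, is not the definition: $R(m)$ is defined as the integral closure of $B$ in $\Frac(B)$. This identity is precisely where the hypothesis that $R$ is normal must be used (if $x\in\Frac(R^{1/p^{m}})$ is integral over $R$, then $x^{p^{m}}\in\Frac(R)$ is integral over $R$, hence lies in $R$ by normality, so $x\in R^{1/p^{m}}$; conversely every element of $R^{1/p^{m}}$ is integral over $B$). Your proof never otherwise invokes normality, so this step must be supplied rather than cited as a definition. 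Second, and more seriously, the parenthetical ``a normal ring is a finite product of normal domains'' is false without a finiteness hypothesis: it requires finitely many minimal primes (e.g.\ Noetherianity), and the lemma assumes neither. Your second component argument --- choosing a $\Frac(B)$-basis of $\Frac(R^{1/p^{m}})$ containing $1$ --- genuinely needs this reduction, since for a normal ring with infinitely many minimal primes $\Frac(R^{1/p^{m}})$ need not be a free $\Frac(B)$-module and the componentwise reading is unavailable. Every application in the paper concerns a local Noetherian domain, so your argument covers those cases; but to obtain the lemma in its stated generality you would have to either add a hypothesis or replace this step, for instance by the paper's device of base-changing the intersection to the perfect closure of $L$ and descending, which never requires a basis of $\Frac(R^{1/p^{m}})$ over $\Frac(B)$.
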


\begin{proof}
First, we show the assertion in the case where $L=K^{1/p^{m}}$.
Since $R^{1/p^{m}}$ coincides with the integral closure of $R$ in $\Frac(R^{1/p^{m}})$, it holds that
\begin{equation}
R(m)= \Frac(R\otimes_{K}K^{1/p^{m}})\cap R^{1/p^{m}} (\subset\Frac(R^{1/p^{m}})).
\label{pmcase}
\end{equation}

Next, we consider the case where $L$ is perfect.
We show that we have
\begin{equation*}
\begin{split}
R(m)\otimes_{K^{1/p^{m}}}L&=(\Frac(R\otimes_{K}K^{1/p^{m}})\cap R^{1/p^{m}})\otimes_{K^{1/p^{m}}}L\\
&=((\Frac(R\otimes_{K}K^{1/p^{m}}))\otimes_{K^{1/p^{m}}}L)\cap(R^{1/p^{m}}\otimes_{K^{1/p^{m}}}L)\\
&=(\Frac(R\otimes_{K}L))\cap(R^{1/p^{m}}\otimes_{K^{1/p^{m}}}L)\\
&=(\Frac(R\otimes_{K}L))\cap(R\otimes_{K}L)^{1/p^{m}}.
\end{split}
\end{equation*}
The first equality follows from the formula (\ref{pmcase}).
The second equality holds since $L$ is flat over $K^{1/p^{m}}$.
The third equality follows from the isomorphism (\ref{geomnormalfld}).
Since $L$ is perfect, we have $(R\otimes_{K}L)^{1/p^{m}}=R^{1/p^{m}}\otimes_{K^{1/p^{m}}}L$.
Thus, the fourth equality holds.

Finally, we consider a general $L$.
Let $x$ be an element of $\Frac(R\otimes_{K}L)$.
Since $L^{1/p^{\infty}}$ is faithfully flat over $L$, $x^{p^{m}}\in R\otimes_{K}L$ (resp.\,$x\in R(m)\otimes_{K^{1/p^{m}}}L$) if and only if $x^{p^{m}}\in R\otimes_{K}L^{1/p^{\infty}}$ (resp.\,$x\in R(m)\otimes_{K^{1/p^{m}}}L^{1/p^{\infty}}$).
Hence, we finish the proof of Lemma \ref{element}.
\end{proof}

\begin{cor}
Let $K$ be a field of characteristic $p$ and $R$ an algebra over $K$.
Then $R$ is geometrically normal if and only if $R\otimes_{K}K^{1/p}$ is normal.
\label{geomnor}
\end{cor}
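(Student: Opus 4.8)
The forward implication is immediate: geometric normality means $R\otimes_K K'$ is normal for every field extension $K'/K$, so in particular $R\otimes_K K^{1/p}$ is normal. For the converse I would show that $R\otimes_K K^{1/p^m}$ is normal for every $m\geq 1$. Since every finite purely inseparable extension of $K$ is contained in some $K^{1/p^m}$, and normality descends along the faithfully flat maps $R\otimes_K K'\to R\otimes_K K^{1/p^m}$, this yields normality of $R\otimes_K K'$ for all such $K'$, and hence geometric normality of $R$ by the characterization recalled at the beginning of this section.

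To invoke Lemma \ref{element} I first need its two hypotheses. Normality of $R$ follows by faithfully flat descent of normality along the faithfully flat inclusion $R\hookrightarrow R\otimes_K K^{1/p}$ (concretely, the relation $I\cdot(R\otimes_K K^{1/p})\cap R=I$ for ideals $I$ gives $\Frac(R)\cap(R\otimes_K K^{1/p})=R$, which propagates to every localization). Geometric reducedness of $R$ over $K$ follows because $R\otimes_K K^{1/p}$ is normal, hence reduced, and reducedness after the single base change $K^{1/p}/K$ already detects geometric reducedness in characteristic $p$.

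The heart of the argument is a closure property extracted from Lemma \ref{element}. Apply that lemma with its exponent equal to $1$ and with $L=K^{1/p^m}$: since $R\otimes_K K^{1/p}$ is normal we have $R(1)=R\otimes_K K^{1/p}$, and base changing along $K^{1/p}\hookrightarrow K^{1/p^m}$ identifies the left-hand side with $R\otimes_K K^{1/p^m}$. Writing $B_m=R\otimes_K K^{1/p^m}$, this gives
\[
B_m=\{x\in\Frac(B_m)\mid x^{p}\in B_m\};
\]
that is, $B_m$ is closed under those $p$-th roots that happen to lie in $\Frac(B_m)$. Now I would iterate: given $x\in\Frac(B_m)$ with $x^{p^m}\in B_m$, apply this closure property successively to $x^{p^{m-1}},x^{p^{m-2}},\dots,x$, where at each stage the $p$-th power of the current element has just been shown (or was assumed, in the first stage) to lie in $B_m$; this forces $x\in B_m$. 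By Lemma \ref{element}, now applied with exponent $m$ and $L=K^{1/p^m}$, the set $\{x\in\Frac(B_m)\mid x^{p^m}\in B_m\}$ is exactly the normalization $R(m)$, so $R(m)=B_m$ and $B_m=R\otimes_K K^{1/p^m}$ is normal.

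The genuinely delicate points are the two preliminary reductions feeding Lemma \ref{element} — faithfully flat descent of normality and the fact that reducedness of $R\otimes_K K^{1/p}$ already forces geometric reducedness — together with the final passage from ``normal after every $K^{1/p^m}$'' to geometric normality; once the closure identity is in hand, the Frobenius-power iteration upgrading $p$-th roots to $p^m$-th roots is entirely elementary. I would also keep in mind the standing convention, implicit in the use of the normalizations $R(m)$, under which ``reduced and integrally closed in the total ring of fractions'' coincides with ``normal''.
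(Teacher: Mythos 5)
Your proposal is in substance the paper's own argument: both hinge on Lemma \ref{element}, via the observation that normality of $R\otimes_{K}K^{1/p}$ forces the base change to be closed under taking $p$-th roots inside its total ring of fractions, followed by the elementary Frobenius iteration upgrading this to $p^{m}$-th roots. Your preliminary reductions (descent of normality along the faithfully flat map $R\hookrightarrow R\otimes_{K}K^{1/p}$, and geometric reducedness of $R$ from reducedness of $R\otimes_{K}K^{1/p}$) are correct, and the organizational difference --- you work level by level with $K^{1/p^{m}}$ and then descend to finite purely inseparable subextensions, whereas the paper passes directly to $K^{1/p^{\infty}}$ --- is immaterial; to match the characterization recalled at the start of Section \ref{geomnormalsection} you would pass to the filtered colimit over $m$, which preserves normality.

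The one step you cannot leave as a ``standing convention'' is the final inference from $R(m)=R\otimes_{K}K^{1/p^{m}}$ to normality of $R\otimes_{K}K^{1/p^{m}}$. The paper is explicit at the start of Section \ref{geomnormalsection} that ``reduced and integrally closed in the total ring of fractions'' is strictly weaker than ``normal'' unless the ring has finitely many minimal primes, and $R$ here is an arbitrary $K$-algebra. The repair is exactly the reduction the paper performs before invoking Lemma \ref{element}: normality is checked on localizations at primes, and $\Spec(R\otimes_{K}K^{1/p^{m}})\to\Spec R$ is a homeomorphism, so one may assume from the outset that $R$ is local; then $R\otimes_{K}K^{1/p}$ is a normal local domain, each $R\otimes_{K}K^{1/p^{m}}$ is a reduced local ring with irreducible spectrum, hence a domain, and only then does ``equal to its own normalization in its field of fractions'' mean ``normal''. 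With that localization inserted at the beginning, your proof is complete.
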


\begin{proof}
If $R$ is geometrically normal, $R\otimes_{K}K^{1/p}$ is normal by the definition of the geometrical normality.
Suppose that $R\otimes_{K}K^{1/p}$ is normal.
It suffices to show that $R\otimes_{K}K^{1/p^{\infty}}$ is normal.
Thus, we may and do assume that $R$ is a local ring and $R\otimes_{K}K^{1/p}$ is a normal local domain.
To show that the integral domain $R\otimes_{K}K^{1/p^{\infty}}$ is normal, it suffices to show that
$$\{r\in\Frac(R\otimes_{K}K^{1/p^{\infty}})\mid \exists i\in\Z, r^{p^{i}}\in R\otimes_{K}K^{1/p^{\infty}}\}=R\otimes_{K}K^{1/p^{\infty}}.$$
Hence, Corollary \ref{geomnor} follows from Lemma \ref{element} and the assumption that $R\otimes_{K}K^{1/p}$ is normal.
\end{proof}

\begin{not-dfn}[cf.\,Remark \ref{B?}]
Let $K$ be a field of characteristic $p$, $B$ a subset of a $p$-basis of $K$ over $K^{p}$, and $R$ an algebra over $K$.
We shall write $K(B^{1/p^{n}})$ (resp.\,$K(B^{1/p^{\infty}})$) for the field $K(x^{1/p^{n}}\mid x\in B)$ (resp.\,$\bigcup_{n\geq 1}K(B^{1/p^{n}})$).
We shall say that $R$ is {\it $B$-reduced} (resp.\,{\it $B$-integral}; {\it $B$-normal}) if $R\otimes_{K}K(B^{1/p^{\infty}})$ is reduced (resp.\,integral; normal).
For a $B$-integral algebra $R$, we shall say that $R$ is {\it $B$-Japanese} if, for any natural number $n$, the normalization of $R\otimes_{K}K(B^{1/p^{n}})$ in its field of fractions is finite over $R\otimes_{K}K(B^{1/p^{n}})$.
\label{x-geom}
\end{not-dfn}

\begin{rem}
We use the same notation in Notation-Definition \ref{x-geom}.
Let $\mathscr{P}$ be one of the properties written in italics in Notation-Definition \ref{x-geom}.
Note that, for any $p$-basis $B'$ of $K^p(B)$ over $K^p$, $R$ is $B$-$\mathscr{P}$ if and only if $R$ is $B'$-$\mathscr{P}$.
However, we do not adopt the terminology ``$K(B^{1/p})$-$\mathscr{P}$" because these could be misunderstood as properties of algebras over $K(B^{1/p})$.
\label{B?}
\end{rem}

\begin{prop}
Let $K$, $B$, and $R$ be as in Notation-Definition \ref{x-geom}.
\begin{enumerate}
\item 
$R$ is $B$-integral if and only if $\Spec R$ is irreducible and $R$ is $B$-reduced.
\item
$R$ is $B$-reduced (resp.\,$B$-normal) if and only if $R\otimes_{K}K(B^{1/p})$ is reduced (resp.\,normal).
\item
$R$ is $B$-reduced (resp.\,$B$-normal) if and only if $R$ is $B'$-reduced (resp.\,$B'$-normal) for any finite subset $B'\subset B$.
\end{enumerate}
Suppose that $R$ is a $B$-integral discrete valuation ring and write $R(n_{B})$ for the normalization of $R\otimes_{K}K(B^{1/p^{n}})$ in its field of fractions.
\begin{enumerate}
\setcounter{enumi}{3}
\item
$R(n_{B})$ is a discrete valuation ring.
Moreover, if $R$ is $B$-Japanese, then $R\otimes_{K}K(B^{1/p^{n}})$ is Noetherian.
\item
If $R$ is $B$-Japanese, then $R$ is $B'$-Japanese for any subset $B'\subset B$.
\item
$R$ is $B$-Japanese if and only if $R(1_{B})$ is finite over $R\otimes_{K}K(B^{1/p})$.
\item
If $R$ is $B$-Japanese, then the completion $\widehat{R}$ is $B$-integral.
\end{enumerate}
\label{B-prop}
\end{prop}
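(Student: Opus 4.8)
The plan is to dispatch the three ``$B$-$\mathscr{P}$'' assertions (1)--(3) by reducing every statement to a single inseparable step, using repeatedly that $\{x^{1/p}\mid x\in B\}$ is again part of a $p$-basis of $K(B^{1/p})$ and that $K(B^{1/p^\infty})/K$ is purely inseparable. For assertion~1, pure inseparability makes $\Spec(R\otimes_K K(B^{1/p^\infty}))\to\Spec R$ a universal homeomorphism, so $\Spec R$ is irreducible exactly when the source is; since a reduced ring is a domain iff its spectrum is irreducible, $B$-integrality is equivalent to irreducibility together with $B$-reducedness. In assertion~2 the content is the implication from level $1/p$ to level $1/p^\infty$. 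For the reduced case I would prove the single step ``$R\otimes_K K(B^{1/p^{m}})$ reduced $\Rightarrow R\otimes_K K(B^{1/p^{m+1}})$ reduced'' by a direct Frobenius computation: writing $D=R\otimes_K K(B^{1/p^{m+1}})$, the Frobenius of $D$ factors through the inclusion $R\otimes_K K(B^{1/p^{m}})\hookrightarrow D$, and expanding $\xi^p=0$ in the basis of monomials in the $p^{m+1}$-th roots exhibits each ``Frobenius coordinate'' as a $p$-th power in the reduced ring $R\otimes_K K(B^{1/p^{m}})$, hence zero, so Frobenius is injective on $D$. Iterating, after replacing $(K,B)$ by $(K(B^{1/p^{m}}),B^{1/p^{m}})$, upgrades one level to all levels; the converse is trivial since a subring of a reduced ring is reduced. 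The normal case runs on the same scheme but must be supplemented by an integral-closure descent modelled on Lemma~\ref{element} and Corollary~\ref{geomnor}: an element of $\Frac D$ integral over $D$ has $p$-th power integral over $\Frob(D)\subseteq R\otimes_K K(B^{1/p^{m}})$, which is normal by hypothesis, and a $B$-analogue of the description $R(m)=\{x\mid x^{p^{m}}\in R\otimes_K L\}$ returns the element to $D$. Assertion~3 is then formal: $K(B^{1/p})=\varinjlim_{B'}K(B'^{1/p})$ over finite $B'\subseteq B$ with faithfully flat transitions, and both reducedness and normality pass through such filtered colimits.

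For assertion~4 I would argue valuation-theoretically. Because $K(B^{1/p^{n}})/K$ is purely inseparable, the normalized valuation $v$ of $R$ extends uniquely to $\Frac(R\otimes_K K(B^{1/p^{n}}))$ and $R(n_B)$ is precisely its valuation ring. The crucial point, valid even for infinite $B$, is that every element $a$ of this fraction field satisfies $a^{p^{n}}\in\Frac(R)$ (each generator $x^{1/p^{n}}$ has $p^{n}$-th power $x\in K\subseteq\Frac(R)$), so the value group lies in $\tfrac{1}{p^{n}}v(\Frac(R)^\times)=\tfrac{1}{p^{n}}\Z$; a nontrivial discrete subgroup of $\tfrac{1}{p^{n}}\Z$ is isomorphic to $\Z$, so $R(n_B)$ is a DVR. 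If $R$ is $B$-Japanese, $R(n_B)$ is a module-finite, hence Noetherian, overring of $R\otimes_K K(B^{1/p^{n}})$, and Eakin--Nagata then forces $R\otimes_K K(B^{1/p^{n}})$ to be Noetherian.

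The core is assertions (5)--(7), which I would organize around the completion. For finite $B$, completion commutes with the module-finite base change, so $\widehat{R\otimes_K K(B^{1/p^{n}})}=\widehat R\otimes_K K(B^{1/p^{n}})$; Nagata's analytic-unramifiedness criterion for $1$-dimensional Noetherian local domains then yields ``$R(n_B)$ finite over $R\otimes_K K(B^{1/p^{n}})$'' $\Leftrightarrow$ ``$\widehat R\otimes_K K(B^{1/p^{n}})$ reduced'', and by assertion~2 a single level controls all levels. This gives, for finite $B$, both assertion~6 and the equivalence $R\ \text{$B$-Japanese}\Leftrightarrow \widehat R\ \text{$B$-integral}$ (assertion~7 being one implication, with irreducibility of $\Spec\widehat R$ and assertion~1 supplying integrality). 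To reach infinite $B$ and the restriction statement~5, I would use that $B$-Japaneseness makes $A_1:=R\otimes_K K(B^{1/p})$ Noetherian (assertion~4), whence $\widehat{A_1}$ is reduced; for each finite $B'\subseteq B$ the flat local inclusion $A_{1,B'}\hookrightarrow A_1$ of Noetherian local rings induces a faithfully flat, hence injective, map on completions $\widehat R\otimes_K K(B'^{1/p})=\widehat{A_{1,B'}}\hookrightarrow\widehat{A_1}$, so $\widehat R\otimes_K K(B'^{1/p})$ is reduced for every finite $B'$, and passing to the colimit shows $\widehat R$ is $B$-reduced, hence $B$-integral. Assertion~5 then drops out by running this equivalence for both $B$ and $B'$ and using that $\widehat R$ $B$-integral implies $\widehat R$ $B'$-integral whenever $B'\subseteq B$.

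I expect the main obstacle to be that $\m$-adic completion does not commute with the infinite base change $K\hookrightarrow K(B^{1/p})$: for infinite $B$ one cannot identify $\widehat{R\otimes_K K(B^{1/p^{n}})}$ with $\widehat R\otimes_K K(B^{1/p^{n}})$, so Nagata's criterion is not directly applicable and a naive ``finite for all finite $B'$ implies finite for $B$'' assembly of normalizations can fail. The device that rescues this is the reduction to finite subsets combined with faithful flatness of completion along the flat local maps $A_{1,B'}\hookrightarrow A_1$, together with the ramification bound $e\le p^{n}$ from assertion~4, which keeps the module rank of $R(n_B)$ controlled; making this bookkeeping precise, and securing the Noetherian hypotheses (ultimately via Eakin--Nagata applied to the DVR $R(n_B)$) needed for the faithful-flatness input, is the delicate part. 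A secondary difficulty is the normal case of assertion~2, where the Frobenius single step must be married to the integral-closure descent; producing the right $B$-analogue of Lemma~\ref{element} for $K(B^{1/p})$ in place of the full perfection $K^{1/p^{m}}$ is the technical heart of that case.
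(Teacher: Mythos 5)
Your treatment of assertions 1--4 matches the paper's (homeomorphism of spectra for 1, the Frobenius/monomial decomposition plus the $B$-analogue of Lemma \ref{element} for 2, the filtered colimit for 3, the valuation $r\mapsto v(r^{p^{n}})$ and Eakin--Nagata for 4), and your route to assertion 7 via Nagata's analytic-unramifiedness criterion applied to the Noetherian local domain $R\otimes_{K}K(B^{1/p})$, followed by the faithfully flat injections $\widehat{R}\otimes_{K}K(B'^{1/p})\hookrightarrow\widehat{R\otimes_{K}K(B^{1/p})}$ over finite $B'$, is a legitimate alternative to the paper's more elementary embedding of $\widehat{R}\otimes_{K}K(B^{1/p})$ into the complete discrete valuation ring $\widehat{R(1_{B})}$. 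Your Nagata-criterion equivalence also gives a clean proof of assertion 6 when $B$ is finite.

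The genuine gap is in assertions 5 and 6 for infinite index sets, and it sits exactly where you yourself flag ``the delicate part.'' Your architecture needs the implication ``$\widehat{R}$ is $B'$-integral $\Rightarrow$ $R$ is $B'$-Japanese'' for an arbitrary (possibly infinite) $B'\subset B$, but Nagata's criterion is unavailable there: $R\otimes_{K}K(B'^{1/p^{n}})$ is not known to be Noetherian (its Noetherianity in assertion 4 is \emph{deduced from} finiteness of the normalization, so invoking it here is circular), and its $\fm$-adic completion is not $\widehat{R}\otimes_{K}K(B'^{1/p^{n}})$. The proposed rescue via a ramification bound does not help, because the obstruction is not the rank of $R(n_{B'})$ (which is $1$ over $R\otimes_{K}K(B'^{1/p^{n}})$, the fraction fields being equal) but the finiteness of a birational integral extension of a possibly non-Noetherian ring; ``finite over every finite sub-level'' does not assemble to finiteness. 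The paper resolves these two points with two specific devices you would need to supply. For assertion 5: the chain of injections $(R\otimes_{K}K(B'^{1/p^{n}}))\otimes_{K(B'^{1/p^{n}})}K(B^{1/p^{n}})\hookrightarrow R(n_{B'})\otimes_{K(B'^{1/p^{n}})}K(B^{1/p^{n}})\hookrightarrow R(n_{B})$ exhibits the middle term as a submodule of a finite module over the Noetherian ring $R\otimes_{K}K(B^{1/p^{n}})$, and faithfully flat descent along $K(B'^{1/p^{n}})\to K(B^{1/p^{n}})$ then yields finiteness of $R(n_{B'})$. For assertion 6 with $B$ infinite: a Frobenius twist --- the finiteness of $R(2_{B})$ over $R(1_{B})\otimes_{K(B^{1/p})}K(B^{1/p^{2}})$ is equivalent, after applying $(-)^{p}$ and base changing along the faithfully flat $R(1_{B})^{p}\to R$, to the finiteness of $R(2_{B})^{p}\otimes_{R(1_{B})^{p}}R$ over $R\otimes_{K}K(B^{1/p})$, and this ring is a subring of $R(1_{B})$, which is finite over the Noetherian ring $R\otimes_{K}K(B^{1/p})$. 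Without these (or equivalent) arguments, your proof establishes assertions 5 and 6 only for finite $B'$ and finite $B$ respectively, which is weaker than the stated proposition.
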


\begin{proof}
Since the morphism $\Spec R\otimes_{K}K(B^{1/p^{\infty}})\to \Spec R$ is a homeomorphism, assertion 1 holds.

Next, we show assertion 2.
If $R$ is $B$-reduced (resp.\,$B$-normal),
$R\otimes_{K}K(B^{1/p})$ is also reduced (resp.\,normal) since $K(B^{1/p^{\infty}})$ is faithfully flat over $K(B^{1/p})$.
Suppose that $R\otimes_{K}K(B^{1/p})$ is reduced, or equivalently, the absolute Frobenius endomorphism $F$ of $R\otimes_{K}K(B^{1/p})$ is injective.
Note that the image of $F$ is contained in $R$.
Then the absolute Frobenius endomorphism $F'$ of $R\otimes_{K}K(B^{1/p^{2}})$ can be identified with $\bigoplus_{\varphi}F$ via the commutative diagram
$$
\xymatrix{
R\otimes_{K}K(B^{1/p^{2}})\ar[r]_-{F'}\ar@{=}[d]
&R\otimes_{K}K(B^{1/p^{2}})\\
\underset{\varphi}\bigoplus ((R\otimes_{K}K(B^{1/p}))\underset{b\in B}{\prod}b^{\varphi(b)/p^{2}})\ar[r]_-{\bigoplus_{\varphi}F}
&\underset{\varphi}\bigoplus (R\underset{b\in B}{\prod}b^{\varphi(b)/p}),\ar@{_{(}->}[u]
}
$$
where $\varphi$ ranges over the maps from $B$ to $\{0,\ldots,p-1\}$ such that $\varphi^{-1}(\{1,\ldots,p-1\})$ is a finite set.
Therefore, $F'$ is injective, and hence $R$ is $B$-reduced by induction.

Next, suppose that $R\otimes_{K}K(B^{1/p})$ is normal.
Note that, by the above argument, $R$ is $B$-reduced.
To show that $R$ is $B$-normal, it suffices to show that the first and second parts of the proof of Lemma \ref{element} work even if we replace $L, R(1), K^{1/p^{m}}$, and the assumption that $R$ is geometrically reduced over $K$ with $K(B^{1/p^{\infty}}), R(1_{B}), K(B^{1/p^{m}})$, and the assumption that $R$ is $B$-reduced, respectively.
The only thing we should confirm is the validity of the fourth equality which we checked in the second part of the proof of Lemma \ref{element}.
Hence, we should show
\begin{align*}
R^{1/p}\otimes_{K(B^{1/p})}K(B^{1/p^{\infty}})
=(R\otimes_{K}K(B^{1/p^{\infty}}))^{1/p}.
\end{align*}
This follows from
\begin{align*}
R^{1/p}\otimes_{K(B^{1/p})}K(B^{1/p^{\infty}})
&=(R\otimes_{K^{p}(B)}K^{p}(B^{1/p^{\infty}}))^{1/p}\\
&=(R\otimes_{K}K\otimes_{K^{p}(B)}K^{p}(B^{1/p^{\infty}}))^{1/p}\\
&=(R\otimes_{K}K(B^{1/p^{\infty}}))^{1/p}.
\end{align*}

Assertion 3 follows from the fact that
$$R\otimes_{K}K(B^{1/p})=\varinjlim_{B'}R\otimes_{K}K(B'^{1/p}),$$
where $B'$ ranges over the finite subsets of $B$.

Next, we prove assertion 4.
Write $v_{R}$ for the normalized valuation of $R$ and $v_{R(n_{B})}$ for the map
$$R(n_{B})\to \N\cup\{\infty\}: r\mapsto v_{R}(r^{p^{n}}).$$
Then $v_{R(n_{B})}$ is a (possibly not normalized) nontrivial discrete valuation of $R(n_{B})$, which shows that $R(n_{B})$ is a discrete valuation ring.
If $R$ is $B$-Japanese, $R\otimes_{K}K(B^{1/p^{n}})$ is Noetherian by the Eakin-Nagata theorem.

Next, we suppose that $R$ is $B$-Japanese and prove assertion 5.
Since $R$ is $B$-integral, we have natural injections
$$(R\otimes_{K}K(B'^{1/p^{n}}))\otimes_{K(B'^{1/p^{n}})}K(B^{1/p^{n}})
\hookrightarrow
R(n_{B'})\otimes_{K(B'^{1/p^{n}})}K(B^{1/p^{n}})
\hookrightarrow
R(n_{B}).$$
Since $K(B^{1/p^{n}})$ is faithfully flat over $K(B'^{1/p^{n}})$ and $R\otimes_{K}K(B^{1/p^{n}})$ is Noetherian by 4, $R(n_{B'})$ is finite over $R\otimes_{K}K(B'^{1/p^{n}})$.
Hence, assertion 5 holds.

Next, we prove assertion 6.
By the proof of assertion 4, $R\otimes_{K}K(B^{1/p})$ is Noetherian.
It suffices to show that $R(2_{B})$ is finite over $R(1_{B})\otimes_{K(B^{1/p})}K(B^{1/p^{2}})$, which is equivalent to that $R(2_{B})^{p}$ is finite over $R(1_{B})^{p}\otimes_{K^{p}(B)}K^{p}(B^{1/p})$.
Since $R$ is faithfully flat over $R(1_{B})^{p}$, it suffices to show that $R(2_{B})^{p}\otimes_{R(1_{B})^{p}}R$ is finite over $$(R(1_{B})^{p}\otimes_{K^{p}(B)}K^{p}(B^{1/p}))\otimes_{R(1_{B})^{p}}R
=R\otimes_{K}K(B^{1/p}).$$
Since $R(2_{B})^{p}\otimes_{R(1_{B})^{p}}R$ is integral over $R\otimes_{K}K(B^{1/p})$ and $R$ is flat over $R(1)^{p}$, $R(2_{B})^{p}\otimes_{R(1_{B})^{p}}R$ is a subring of $R(1_{B})$.
Since $R(1_{B})$ is finite over a Noetherian ring $R\otimes_{K}K(B^{1/p})$, assertion 6 holds.

Finally, we assume that $R$ is a $B$-Japanese discrete valuation ring and prove assertion 7.
We may assume that $B$ is a finite set by 3 and 5.
By 1 and 2, it suffices to show that $\widehat{R}\otimes_{K}K(B^{1/p})=\widehat{R}\otimes_{R}(R\otimes_{K}K(B^{1/p}))$ is reduced.
Since $R\otimes_{K}K(B^{1/p})$ and hence $R(1_{B})$ is finite over $R$, $\widehat{R}\otimes_{R}(R\otimes_{K}K(B^{1/p}))=\widehat{R\otimes_{K}K(B^{1/p})}$ is a subring of $\widehat{R}\otimes_{R}(R(1_{B}))=\widehat{R(1_{B})}$.
Since $\widehat{R(1_{B})}$ is a discrete valuation ring, assertion 7 holds. 
\end{proof}

The next corollary will be improved in Proposition \ref{Bgeqp}.
We note that $g_{10}$ in Corollary \ref{geqp} is a local analogue of a genus change of a curve.

\begin{cor}
Let $K$ be a field of characteristic $p$, $B$ a $p$-basis of $K$ over $K^{p}$, and $R$ a discrete valuation ring geometrically integral over $K$.
Moreover, we use the notation of Proposition \ref{B-prop}.
Suppose that $R$ is $B$-Japanese and the residue field $L$ of $R$ is finite over $K$.
Write $g_{10}$ (resp.\,$g_{21}$) for
\begin{equation*}
\begin{split}
&\dim_{K^{1/p}}R(1)/R\otimes_{K}K^{1/p}\\
(\text{resp.\,}&\dim_{K^{1/p^{2}}}R(2)/R(1)\otimes_{K^{1/p}}K^{1/p^{2}}).
\end{split}
\end{equation*}
Then we have $g_{10}\geq pg_{21}$.
\label{geqp}
\end{cor}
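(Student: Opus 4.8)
The plan is to reduce the inequality to a statement about three orders in a single discrete valuation ring, and then to extract the factor $p$ from the fact that Frobenius multiplies valuations by $p$. First I would record the Frobenius inclusions. Since $R$ and $R(1)$ are normal and geometrically reduced, Lemma \ref{element} gives $R(1)=\{x\in\Frac(R\otimes_{K}K^{1/p})\mid x^{p}\in R\otimes_{K}K^{1/p}\}$, and similarly for $R(2)$ over $K^{1/p^{2}}$; raising an integral equation to the $p$-th power then yields $R(1)^{p}\subseteq R$ and $R(2)^{p}\subseteq R(1)$. Because $g_{10}$ and $g_{21}$ are $K^{1/p}$- and $K^{1/p^{2}}$-dimensions of finite-length modules supported at the closed point, they are unchanged under completion (Proposition \ref{B-prop}.7 guarantees $\widehat{R}$ is again $B$-integral), and by Proposition \ref{B-prop}.3 and \ref{B-prop}.5 I may assume $[K:K^{p}]<\infty$. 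So I reduce to the complete case with all relevant field extensions finite.

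Next I would reformulate everything inside the discrete valuation ring $A_{2}:=R(2)$. Put $A_{0}:=R\otimes_{K}K^{1/p^{2}}$ and $A_{1}:=R(1)\otimes_{K^{1/p}}K^{1/p^{2}}$; these are orders with common normalization $A_{2}$. Flat base change along $K^{1/p^{2}}/K^{1/p}$ gives a canonical isomorphism $A_{1}/A_{0}\cong\bigl(R(1)/(R\otimes_{K}K^{1/p})\bigr)\otimes_{K^{1/p}}K^{1/p^{2}}$, so that $\dim_{K^{1/p^{2}}}A_{1}/A_{0}=g_{10}$ and, directly from the definitions, $\dim_{K^{1/p^{2}}}A_{2}/A_{1}=g_{21}$. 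The Frobenius inclusions upgrade to $A_{2}^{p}\subseteq A_{1}$ and $A_{1}^{p}\subseteq A_{0}$, and the claim becomes
\[
\dim_{K^{1/p^{2}}}A_{1}/A_{0}\ \ge\ p\,\dim_{K^{1/p^{2}}}A_{2}/A_{1}.
\]

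The heart of the matter is the factor $p$, and here is where the real work lies. Let $w$ be the normalized valuation of $A_{2}$, so that $\phi(x)=x^{p}$ multiplies $w$ by $p$ and satisfies $\phi(A_{i+1})\subseteq A_{i}$. I would compute each invariant $\dim A_{2}/A_{i}$ from the complete-intersection presentations of $A_{0}$ over $R$ and of $A_{1}$ over $R(1)$ (for $[K:K^{p}]=p$ these are the hypersurfaces $R[\theta]/(\theta^{p^{2}}-s)$ and $R(1)[\theta]/(\theta^{p}-s^{1/p})$, whence the conductor, and hence $\delta$, is read off from the best-approximation defects $q_{0}=\sup_{r\in R}w(r^{p}-s)$ and its level-one analogue $q_{1}=\sup_{c\in R(1)}(\,$val$(c^{p}-s^{1/p})\,)$). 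The crucial point is that $\phi$ forces a relation $q_{0}\gtrsim p\,q_{1}$: if $c\in R(1)$ realizes the defect at level one, then $c^{p^{2}}\in R$ and $c^{p^{2}}-s=(c^{p}-s^{1/p})^{p}$, so its $w$-valuation is exactly $p$ times that of $c^{p}-s^{1/p}$ (up to the ramification index of $R(1)/R$). Feeding this valuation relation into the $\delta$-formulas is what produces $g_{10}\ge p\,g_{21}$. I would stress that the formal containments $A_{i+1}^{p}\subseteq A_{i}\subseteq A_{i+1}$ by themselves only bound $g_{10}$ from \emph{above} (for instance $g_{10}=\dim_{K^{1/p^{2}}}A_{1}/A_{0}\le\dim_{K^{1/p^{2}}}A_{1}/A_{1}^{p}$, since $A_{1}/A_{0}$ is a quotient of $A_{1}/A_{1}^{p}$), so the desired lower bound genuinely requires the precise computation of the two $\delta$-invariants, not merely these inclusions.

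The main obstacle is therefore the bookkeeping: tracking the residue-degree and ramification contributions to $g_{10}$ and $g_{21}$ at the same time, and converting $q_{0}\gtrsim p\,q_{1}$ into the clean inequality when $e(R(1)/R)$ is itself a power of $p$ (the regime in which the two cases of the $\delta$-formula differ). This is exactly the combinatorial core, the Frobenius-coin-problem phenomenon, that is isolated later in Theorem \ref{intromain2}; for the present corollary one needs only the resulting inequality, so I would carry out a stripped-down version of that count — enough to produce the factor $p$ — together with the reduction to $[K:K^{p}]<\infty$ that lets one argue one element of the $p$-basis at a time.
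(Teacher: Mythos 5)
Your strategy --- computing both local genus changes through the approximation invariants $q$ and the explicit $\delta$-formulas, then extracting the factor $p$ from the interplay of Frobenius with ramification indices and residue degrees --- is viable, and it is in fact the route the paper takes for the \emph{stronger} statement, Proposition \ref{Bgeqp} (via Theorem \ref{q} and Lemma \ref{qlem}). But it is not the paper's proof of this corollary, and as written your argument is incomplete at exactly the point you identify as ``the heart of the matter'': converting ``$q_{0}\gtrsim p\,q_{1}$ up to the ramification index'' into $g_{10}\geq pg_{21}$ requires the full case analysis of Lemma \ref{qlem} over the four possibilities for $(e_{1},e_{2})\in\{1,p\}^{2}$ --- and in the case $e_{1}=e_{2}=p$ one only gets $q(x^{1/p})=q(x)$ and $\delta_{21}=\delta_{10}$, so the factor $p$ comes entirely from the residue-degree bookkeeping rather than from any valuation relation. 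You explicitly defer this count (``I would carry out a stripped-down version of that count''), and since you yourself correctly observe that the bare containments $A_{i+1}^{p}\subseteq A_{i}\subseteq A_{i+1}$ prove nothing in the desired direction, the proposal as it stands does not establish the inequality.

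The paper's own proof is much shorter and bypasses all of this machinery. By the proof of Proposition \ref{B-prop}.6, $R(2)^{p}\otimes_{R(1)^{p}}R$ is a subring of $R(1)$ containing $R\otimes_{K}K^{1/p}$, so $\dim_{K^{1/p}}\bigl(R(2)^{p}\otimes_{R(1)^{p}}R/(R\otimes_{K}K^{1/p})\bigr)\leq g_{10}$. Applying Frobenius identifies $g_{21}$ with $\dim_{K^{1/p}}\bigl(R(2)^{p}/(R(1)^{p}\otimes_{K}K^{1/p})\bigr)$, and tensoring this finite-length $R(1)^{p}$-module along the finite flat extension $R(1)^{p}\subset R$ multiplies its dimension by the product $ef$ of the ramification index and residue degree of that extension; a short lemma shows $ef>1$, hence $ef\geq p$ since it is a power of $p$. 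Thus the factor $p$ is produced purely by flat base change along $R(1)^{p}\subset R$, with no conductor or $\delta$-invariant computations and no case analysis. To make your route rigorous you would need to actually execute the four-case count of Lemma \ref{qlem} (including the one-variable-at-a-time iteration for general $B$, as in Proposition \ref{Bgeqp}); alternatively, the Frobenius-twist argument above proves the corollary directly.
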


\begin{proof}
By the proof of Proposition \ref{B-prop}.6, it suffices to show that
$$\dim_{K^{1/p}}R\otimes_{R(1)^{p}}R(2)^{p}/R\otimes_{K}K^{1/p} \geq pg_{21}.$$
Since we have
\begin{equation*}
\begin{split}
g_{21}=&\dim_{K^{1/p^{2}}}R(2)/R(1)\otimes_{K^{1/p}}K^{1/p^{2}}\\
=&\dim_{K^{1/p}}R(2)^{p}/R(1)^{p}\otimes_{K}K^{1/p}.
\end{split}
\end{equation*}
and
\begin{equation*}
\begin{split}
R\otimes_{R(1)^{p}}R(2)^{p}/R\otimes_{K}K^{1/p}=&(R(2)^{p}/R(1)^{p}\otimes_{K}K^{1/p})\otimes_{R(1)^{p}}R,
\end{split}
\end{equation*}
Corollary \ref{geqp} follows from the next lemma.
\end{proof}

\begin{lem}
The residue degree or ramification index of the extension $R(1)^{p}\subset R$ is greater than $1$.
\end{lem}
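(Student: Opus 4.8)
The plan is to prove the sharper statement that $R$ is a free module of rank $p$ over $R(1)^{p}$, so that the product $ef$ of the ramification index and the residue degree of $R(1)^{p}\subset R$ equals $p$; since $p$ is prime, one of the two factors is then forced to be $p$.

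First I would pin down the fraction field extension. Writing $F=\Frac(R)$, we have $\Frac(R(1))=\Frac(R\otimes_{K}K^{1/p})=F\otimes_{K}K^{1/p}$, which is a field because $R$ is geometrically integral, so that $\Frac(R(1)^{p})=(F\otimes_{K}K^{1/p})^{p}=K\cdot F^{p}$. The extension $F/KF^{p}$ is purely inseparable, since $x^{p}\in F^{p}\subseteq KF^{p}$ for every $x\in F$, and I claim its degree is exactly $p$. As $R$ is a geometrically integral discrete valuation ring over $K$ with residue field finite over $K$, the field $F$ is a separably generated extension of $K$ of transcendence degree one; fixing a separating transcendence basis $t$, the $K$-derivation $\partial\colon K(t)\to K(t)$ with $\partial t=1$ extends uniquely to $F$ along the separable algebraic extension $F/K(t)$. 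Every element of $KF^{p}$ is killed by $\partial$ (the scalars from $K$ because $\partial$ is $K$-linear, the $p$-th powers because $\partial(y^{p})=p\,y^{p-1}\partial y=0$), whereas $\partial t=1\neq 0$; hence $t\notin KF^{p}$. Since $F/K(t)$ separable gives $F=K(t)\cdot F^{p}=KF^{p}(t)$ and $t^{p}\in F^{p}\subseteq KF^{p}$, we conclude $[F:KF^{p}]=p$.

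Next I would identify $R$ with the integral closure of $R(1)^{p}$ in $F$. Every element of $R(1)$ is a root of $X^{p}-x^{p}$ with $x^{p}\in R(1)^{p}$, so $R(1)$, and hence the intermediate ring $R$ (recall $R(1)^{p}\subseteq R\subseteq R(1)$), is integral over $R(1)^{p}$; as $R$ is a normal domain with fraction field $F$, it coincides with the integral closure of $R(1)^{p}$ in $F$. Granting that this integral closure is module-finite over the discrete valuation ring $R(1)^{p}$, it is a finitely generated torsion-free, hence free, module over $R(1)^{p}$, necessarily of rank $[F:KF^{p}]=p$. Reducing modulo $\mathfrak{m}_{R(1)^{p}}$ and using that $R$ is local yields $\dim_{k}\!\big(R/\mathfrak{m}_{R(1)^{p}}R\big)=e\cdot f=p$, where $k$ is the residue field of $R(1)^{p}$; since $p$ is prime this forces $e=p$ or $f=p$, proving the lemma.

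The main obstacle is exactly the finiteness (equivalently, the absence of defect) in the degree-$p$ purely inseparable extension $R(1)^{p}\subseteq R$: without it one could a priori have $e=f=1$. This is precisely where the $B$-Japanese hypothesis is used, and it is the content of the finiteness statements in Proposition \ref{B-prop}, which guarantee that the normalizations in the relevant purely inseparable base changes are finite and hence that $R$ is module-finite over $R(1)^{p}$. Everything else in the argument is formal once $[F:KF^{p}]=p$ and this finiteness are in hand.
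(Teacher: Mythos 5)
Your strategy---show that $R$ is finite free of rank $p$ over $R(1)^{p}$, so that $ef=p$ and one factor must be $p$---would prove a sharper statement, but it hinges on two claims that are not available in the generality in which this lemma is stated and used. The lemma sits in the setting of Corollary \ref{geqp}: $R$ is only assumed to be a geometrically integral, $B$-Japanese discrete valuation ring over $K$ with $[L:K]<\infty$; it is \emph{not} assumed to be essentially of finite type over $K$, and $[K:K^{p}]$ may be infinite. Your assertion that $F=\Frac(R)$ is separably generated of transcendence degree one over $K$ already fails here: $R=K[[t]]$ satisfies all the hypotheses (with $L=K$), yet $K((t))$ has infinite transcendence degree over $K$, so there is no separating transcendence basis $\{t\}$ on which to run the derivation argument. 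Worse, when $[K:K^{p}]=\infty$ the conclusion $[F:KF^{p}]=p$ is genuinely false for this $R$: by separability $KF^{p}=K\otimes_{K^{p}}K^{p}((t^{p}))$, whose elements are Laurent series with coefficients in a finite-dimensional $K^{p}$-subspace of $K$, so an element such as $\sum_{j}s_{j}t^{pj}$ with $\{s_{j}\}$ an infinite $p$-independent family lies outside it and $[F:KF^{p}]=\infty$; correspondingly $R$ is not module-finite over $R(1)^{p}$, and the rank count $\sum e_{i}f_{i}=[F:\Frac(R(1)^{p})]$ is unavailable. (The lemma still holds there, with $e=p$, $f=1$, but not for your reason.) Note also that the finiteness you defer to Proposition \ref{B-prop} is finiteness of normalizations over the base changes $R\otimes_{K}K(B^{1/p^{n}})$; it does not assert, and does not directly give, finiteness of $R$ over the subring $R(1)^{p}$. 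Without both the degree computation and the finiteness, your argument cannot exclude $e=f=1$, which is exactly the content of the lemma.

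The paper's proof avoids global degree counts altogether and works with residue fields and value groups: one may assume $L(1)=L^{1/p}$ (otherwise the residue degree of $R(1)^{p}\subset R$ is already $>1$), and then one shows that the ramification index of $R(1)$ over $R$ equals $1$ by inserting the intermediate discrete valuation ring $R\otimes_{K}K(B')$ for a suitable \emph{finite} subset $B'\subset K^{1/p}$ generating the residue extension---the finiteness of $[L:K]$ is what tames the possibly infinite $K^{1/p}/K$. Since $e(R(1)/R(1)^{p})=p$, this forces $e(R/R(1)^{p})=p$. If you add the hypotheses that $R$ is essentially of finite type over $K$ and that $R$ is indeed finite over $R(1)^{p}$, your argument becomes correct and yields the cleaner identity $ef=p$; but as written it does not prove the lemma as stated.
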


\begin{proof}
Write $L(1)$ for the residue field of $R(1)$.
We may and do assume that $L(1)=L^{1/p}$.
It suffices to show that the ramification index of $R(1)$ over $R$ is $1$.
Let $B'$ be a subset of $K^{1/p}$ which is a $p$-basis of $L K^{1/p} / L$.
Then $R\otimes_{K}K(B')$ is a discrete valuation ring whose residue field is $LK^{1/p}$ and the ramification index of the extension $R \otimes_K K(B') \supset R$ is $1$ by {\cite[I \S6 Proposition 15]{Ser}}.
Let $B''$ be a subset of $K^{1/p}$ such that $B'\cup B''$ is a $p$-basis of $K^{1/p}/K$ and $B'\cap B''=\emptyset$.
Note that the cardinality of $B''$ is $\log_{p}[K^{1/p}:K(B')]$.
Since we have
$$[K^{1/p}:K(B')]\leq[L(B'):K(B')]\leq[L:K],$$
$B''$ is a finite set.
Moreover, it holds that
\begin{equation*}
\begin{split}
&[L^{1/p}:L(B')]=[L^{1/p}(B'^{1/p^{\infty}}):L(B'^{1/p^{\infty}})]\\
=&[K^{1/p}(B'^{1/p^{\infty}}):K(B'^{1/p^{\infty}})]=[K^{1/p}:K(B')],
\end{split}
\end{equation*}
where the second equality holds since we have $L^{1/p}(B'^{1/p^{\infty}})=(L(B'^{1/p^{\infty}}))^{1/p}$ is finite over $K^{1/p}(B'^{1/p^{\infty}})=(K(B'^{1/p^{\infty}}))^{1/p}$.
Since we have $L(1)=L^{1/p}$, the ramification index of $R(1)$ over $R\otimes_{K}K(B')$ is $1$.
\end{proof}

\label{geomnormalsection}

\section{The Picard schemes of regular curves}
In this section, we explain structures of $p$-power torsion subgroups of the Picard schemes of regular geometrically integral curves.

Let $K$ be a field of characteristic $p$, $\overline{K}$ an algebraic closure of $K$, and $C$ a proper regular curve over $K$.
For any natural number $m$, write $C(m)$ for the normalization of $C\times_{\Spec K}\Spec K^{1/p^{m}}$ in its field of fractions and $g_{m}$ for the arithmetic genus of $C(m)$.
Moreover, for any field extension $L$ over $K^{1/p^{m}}$, we write $C(m)_{L}$ for the scheme $C(m)\times_{\Spec K^{1/p^{m}}}\Spec L$.
Note that we have the following sequence of curves over $\overline{K}$:
$$C(m)_{\overline{K}}\to C(m-1)_{\overline{K}}\to \ldots \to C(1)_{\overline{K}}\to C(0)_{\overline{K}}(=C\times_{\Spec K}\Spec \overline{K}).$$
There exists a natural number $n$ such that $C(n)_{\overline{K}}$ is normal.
Write $n_{0}$ for the minimal one.

First, we recall fundamental properties of the Picard schemes of curves.
\begin{lem}
Let $m$ be a natural number.
We have the following:
\begin{enumerate}
\item
$\Pic_{C(m)/K^{1/p^{m}}}$ is a smooth group scheme of dimension $g_{m}$ over $K^{1/p^{m}}$.
\item
The N\'eron-Severi group $\Pic_{C(m)/K^{1/p^{m}}}(\overline{K})/\Pic^{0}_{C(m)/K^{1/p^{m}}}(\overline{K})$ of $C(m)$ is isomorphic to $\Z$.
\item
There exists a short exact sequence of algebraic groups over $\overline{K}$
$$0\to G(m) \to \Pic^{0}_{C(m)_{\overline{K}}/\overline{K}} \to A(m) \to 0$$
such that $A(m)$ is an abelian variety and $G(m)$ is a connected smooth unipotent algebraic group.
Moreover, these properties characterize the algebraic subgroup $G(m)$ of $\Pic^{0}_{C(m)_{\overline{K}}/\overline{K}}$ and the quotient algebraic group $A(m)$ of $\Pic^{0}_{C(m)_{\overline{K}}/\overline{K}}$.
\item
The natural homomorphism $\Pic_{C(m)_{\overline{K}}/\overline{K}}\to \Pic_{C(m+1)_{\overline{K}}/\overline{K}}$ is surjective.
This homomorphism induces an isomorphism $A(m)\to A(m+1)$ and an isomorphism between the N\'eron-Severi groups of $C(m)$ and $C(m+1)$.
\item
$\Pic^{0}_{C(n_{0})/K^{1/p^{n_{0}}}}$ is an abelian variety over $K^{1/p^{n_{0}}}$.
In other words, $G(n_{0})$ is a trivial group scheme and we have $\Pic^{0}_{C(n_{0})_{\overline{K}}/\overline{K}} \simeq A(n_{0})$.
\end{enumerate}
\label{fundamentalstructure}
\end{lem}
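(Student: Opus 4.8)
The plan is to deduce all five assertions from the representability and structure theory of Picard schemes of curves together with the two inputs proved earlier, Lemma~\ref{surj} and Proposition~\ref{connunip}. The organizing remark is that every transition morphism in the tower $C(m)_{\overline{K}}\to C(m-1)_{\overline{K}}\to\cdots\to C(0)_{\overline{K}}$ is finite, birational, and universally homeomorphic: each $C(m)$ is a partial normalization inside the purely inseparable tower $K^{1/p^{m}}/K$, so these maps are purely inseparable, hence universally homeomorphic, and they are birational since they are isomorphisms on function fields. In particular, for every $m\le n_{0}$ the normalization $\widetilde{C}=C(n_{0})_{\overline{K}}$ of $C(m)_{\overline{K}}$ maps to $C(m)_{\overline{K}}$ by a universally homeomorphic birational morphism, which is precisely the situation needed to apply Lemma~\ref{surj} and Proposition~\ref{connunip} over the base field $\overline{K}$.

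For assertion~1, the Picard functor of $C(m)$ is representable because $C(m)$ is proper and geometrically integral over $K^{1/p^{m}}$; smoothness follows from {\cite[Proposition 8.4.2]{BLR}} (the deformation obstruction lies in $H^{2}(C(m),\mathcal{O}_{C(m)})=0$), and the dimension equals $h^{1}(C(m),\mathcal{O}_{C(m)})$, namely the arithmetic genus $g_{m}$. Assertion~2 is the standard identification of the N\'eron--Severi group of a proper geometrically integral curve over $\overline{K}$ with $\Z$ via the degree map.

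For assertion~3, I would set $A(m):=\Pic^{0}_{\widetilde{C}/\overline{K}}$, which is the Jacobian of the smooth proper curve $\widetilde{C}$ and hence an abelian variety, and $G(m):=\Ker(\Pic^{0}_{C(m)_{\overline{K}}/\overline{K}}\to\Pic^{0}_{\widetilde{C}/\overline{K}})$. Proposition~\ref{connunip} shows $G(m)$ is connected, smooth, and unipotent (and, being connected, coincides with the kernel on the full Picard schemes), while Lemma~\ref{surj} shows the pullback to $\widetilde{C}$ is surjective, so the displayed sequence is exact. Uniqueness is Chevalley's structure theorem over the perfect field $\overline{K}$: since $G(m)$ is affine and the quotient is an abelian variety, $G(m)$ is forced to be the maximal connected affine subgroup and $A(m)$ the abelian-variety quotient, both of which are canonical. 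For assertion~4, surjectivity of $\Pic_{C(m)_{\overline{K}}}\to\Pic_{C(m+1)_{\overline{K}}}$ is again Lemma~\ref{surj}; since $C(m)_{\overline{K}}$ and $C(m+1)_{\overline{K}}$ have the same normalization $\widetilde{C}$, functoriality of pullback along $\widetilde{C}\to C(m+1)_{\overline{K}}\to C(m)_{\overline{K}}$ makes the two projections to $\Pic^{0}_{\widetilde{C}}$ agree, so the induced map $A(m)\to A(m+1)$ is the identity of $\Pic^{0}_{\widetilde{C}}$, and, because the morphism is birational and therefore preserves degrees, the induced map on N\'eron--Severi groups is the identity of $\Z$. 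Assertion~5 is then immediate: by minimality of $n_{0}$ the curve $C(n_{0})_{\overline{K}}$ is normal, hence smooth over $\overline{K}$ and equal to $\widetilde{C}$, so $G(n_{0})=0$ and $\Pic^{0}_{C(n_{0})_{\overline{K}}}=A(n_{0})$; since properness descends along the faithfully flat extension $\overline{K}/K^{1/p^{n_{0}}}$, the smooth connected group scheme $\Pic^{0}_{C(n_{0})/K^{1/p^{n_{0}}}}$ is proper, i.e.\ an abelian variety.

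The main obstacle is not any single computation but the clean assembly of the structure theory: one must be sure that the unipotent group $G(m)$ furnished by Proposition~\ref{connunip} is the \emph{entire} affine part of $\Pic^{0}_{C(m)_{\overline{K}}}$, so that Chevalley's decomposition carries no toric contribution. This is exactly where the universally homeomorphic (purely inseparable) nature of the tower is indispensable, since a non-unibranch degeneration would instead produce a torus. The second delicate point is the compatibility in assertion~4 ensuring that $A(m)\to A(m+1)$ is a genuine isomorphism, indeed the canonical identification $\Pic^{0}_{\widetilde{C}}=\Pic^{0}_{\widetilde{C}}$, rather than merely an isogeny.
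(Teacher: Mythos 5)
Your argument is correct and follows essentially the same route as the paper: representability and smoothness of the Picard scheme plus $\dim = h^{1}(\cO)$ for assertion 1, the degree map for assertion 2, Lemma~\ref{surj} and Proposition~\ref{connunip} combined with Chevalley's decomposition over $\overline{K}$ for assertions 3 and 4, and normality (hence smoothness) of $C(n_{0})_{\overline{K}}$ for assertion 5. The paper's proof is just a list of the corresponding citations ([BLR, 8.2.1, 8.4.2, 9.2.1, 9.2.3, 9.2.14] and Proposition~\ref{connunip}); you have supplied the same ingredients with the details written out.
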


\begin{rem}
By assertion 4, $A(m)\simeq A(0)$ for any natural number $m$.
\end{rem}

\begin{proof}[Proof of Lemma \ref{fundamentalstructure}]
Assertion 1 follows from {\cite[$\mathrm{n}^{\circ}$ 232, Theorem 3.1]{FGA}} (or {\cite[Theorem 8.2.1]{BLR}}) and {\cite[Proposition 8.4.2]{BLR}}.
Assertion 2 follows from {\cite[Corollary 9.2.14]{BLR}}.
Assertion 5 follows from {\cite[Proposition 9.2.3]{BLR}}.
Assertions 3 and 4 follow from assertion 1, {\cite[Theorem 9.2.1]{BLR}}, Proposition \ref{connunip}, and assertion 5.
\end{proof}

For any natural number $0 \leq m\leq n_{0}$, let $G(m)$ be as in Lemma \ref{fundamentalstructure}.
Write $A$ (resp.\,$G$) for $A(0)$ (resp.\,$G(0)$) in Lemma \ref{fundamentalstructure}.
By Lemma \ref{fundamentalstructure}, we have a sequence of connected smooth unipotent commutative algebraic groups
$$G=G(0)\to G(1)\to \ldots\to G(n_{0})=0.$$
Write $G_{i}$ for the kernel of the homomorphism $G\to G(i)$.
Then $G$ has an increasing filtration
$$0=G_{0}\hookrightarrow G_{1}\hookrightarrow\ldots\hookrightarrow G_{n_{0}}=G.$$

\begin{thm}[cf.\,Theorem \ref{intromain1}]
Let $0\leq i \leq n_{0}$ be a natural number.
The following four algebraic subgroups $G_{i},G'_{i},G''_{i}, G'''_{i}$ of $G$ coincide:
\begin{itemize}
\item
$G_{i}(=\Ker(G\to G(i)))$.
\item
$G'_{i}:=\Ker(\Pic_{C_{\overline{K}}/\overline{K}}\to\Pic_{C(i)_{\overline{K}}/\overline{K}})$.
\item
$G''_{i}:=G_{s}(i)$, i.e., the largest subgroup of $G$ that is $p^{i}$-torsion, $\overline{K}$-split, and unipotent (cf.\,Lemma \ref{maximalvec} and Remark \ref{whyred}.1).
\item
$G'''_{i}:=(\Ker(p^{i}\times\colon G\to G))_{\red}$, i.e., the maximal reduced closed subscheme of $\Ker(p^{i}\times\colon G\to G)$ (cf.\,Remark \ref{whyred}.2).
\end{itemize}
\label{overclosed}
\end{thm}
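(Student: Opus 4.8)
**The plan is to prove the four subgroups coincide by establishing the chain of inclusions $G_i' \subseteq G_i'' \subseteq G_i''' \subseteq G_i \subseteq G_i'$, or some cycle closing all four.** I would organize the argument around the structural facts already assembled: each $G_i = \Ker(G \to G(i))$ is a connected smooth unipotent commutative group (from Lemma~\ref{fundamentalstructure} and Proposition~\ref{connunip}), and by Lemma~\ref{connsmunip} together with Lemma~\ref{Weilstr} the successive quotients $G_{j+1}/G_j$ arise from cokernels of the Weil-restriction units $\eta$, which become $\overline{K}$-split after base change — and here we are already over $\overline{K}$, so they \emph{are} split. The key elementary input is that each step $C(j) \to C(j+1)$ is a purely inseparable degree-$p$ type operation, so multiplication by $p$ should carry $G_{j+1}$ into $G_j$, giving the torsion bound.

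\medskip

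\noindent\textbf{Step 1: $G_i' = G_i$.} Since $G_i' = \Ker(\Pic_{C_{\overline{K}}/\overline{K}} \to \Pic_{C(i)_{\overline{K}}/\overline{K}})$ and $G = G(0)$ is by definition the unipotent part of $\Pic^0_{C_{\overline{K}}/\overline{K}}$, I would observe that the abelian-variety part $A(m)$ is preserved isomorphically under all the transition maps (Lemma~\ref{fundamentalstructure}.4), so the kernel of $\Pic_{C_{\overline{K}}/\overline{K}} \to \Pic_{C(i)_{\overline{K}}/\overline{K}}$ is entirely contained in $G$ and equals the kernel of $G \to G(i)$, which is precisely $G_i$. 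One must also check the N\'eron--Severi groups match (again Lemma~\ref{fundamentalstructure}.4), so nothing escapes into the component group.

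\medskip

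\noindent\textbf{Step 2: $G_i$ is $p^i$-torsion, $\overline{K}$-split, unipotent, hence $G_i \subseteq G_i''$.} Unipotence and $\overline{K}$-splitness follow because $G_i$ is a successive extension of split groups $G_{j+1}/G_j$ (each a vector group over $\overline{K}$, by Lemma~\ref{connsmunip}.3 and Lemma~\ref{splitvector}), and splitness is stable under extension by \cite[Exercises 14.3.12.(2)]{Sp}. For the $p^i$-torsion claim I would show by induction that $p \times$ maps $G_{j+1}$ into $G_j$; this is the crux and rests on the geometry of the single inseparable step $C(j)_{\overline{K}} \to C(j+1)_{\overline{K}}$, where Frobenius-type relations force $p$-th powers of functions trivial modulo the next stage. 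Granting this, $p^i G_i = 0$, so $G_i$ satisfies all three defining properties of the maximal such subgroup $G_i'' = G_s(i)$ from Lemma~\ref{maximalvec}, whence $G_i \subseteq G_i''$.

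\medskip

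\noindent\textbf{Step 3: $G_i'' \subseteq G_i'''$ and $G_i''' \subseteq G_i'$, closing the loop.} The inclusion $G_i'' \subseteq G_i'''$ is immediate: $G_i''$ is $p^i$-torsion, so it lands in $\Ker(p^i\times)$, and being smooth it lands in the maximal reduced subscheme $G_i'''$. The final and hardest inclusion $G_i''' \subseteq G_i'$ requires identifying the reduced $p^i$-torsion with the kernel to the $i$-th normalization; \textbf{I expect this to be the main obstacle}, since it demands showing that a reduced $p^i$-torsion class already becomes trivial on $C(i)_{\overline{K}}$. Here the plan is to use that $G/G_i \cong G(i)$ injects (on $\overline{K}$-points, via smoothness and reducedness) compatibly with the $p$-power filtration, so that the ``$p\times$ injective on graded pieces'' phenomenon (the abstract abelian-group lemma quoted in the introduction) forces a reduced $p^i$-torsion element of $G$ to die in $G(i)$. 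Combining Steps 1--3 yields $G_i' = G_i \subseteq G_i'' \subseteq G_i''' \subseteq G_i'$, forcing all four to be equal.
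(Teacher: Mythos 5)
Your overall architecture (the cycle $G'_i=G_i\subseteq G''_i\subseteq G'''_i\subseteq G'_i$) matches the shape of the paper's argument, and Step 1, the splitness/unipotence of $G_i$, and the inclusion $G''_i\subseteq G'''_i$ are fine. But the two steps you yourself flag as ``the crux'' and ``the main obstacle'' --- that $G_i$ is $p^i$-torsion, and that $G'''_i\subseteq G'_i$ --- are exactly where the proof lives, and neither is actually supplied. Both require the same concrete input, which your proposal never invokes: the element-wise description of the normalization from Lemma \ref{element}, namely that $\cO_{C(i)_{\overline{K}},s_i}=\{f\in\Frac\mid f^{p^i}\in\cO_{C_{\overline{K}},s}\}$. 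The paper first identifies $G(\overline{K})$ with $\bigoplus_{s}\cO^{\ast}_{C(n_0)_{\overline{K}},s_{n_0}}/\cO^{\ast}_{C_{\overline{K}},s}$ and $G'_i(\overline{K})$ with $\bigoplus_{s}\cO^{\ast}_{C(n_0)_{\overline{K}},s_{n_0}}/\cO^{\ast}_{C(i)_{\overline{K}},s_i}$ (via the long exact cohomology sequences for $\cO^{\ast}$), and then Lemma \ref{element} translates ``the class of $f$ is killed by $p^i$'' directly into ``$f\in\cO^{\ast}_{C(i)_{\overline{K}},s_i}$''. That single computation gives $G'_i(\overline{K})=\Ker(p^i\times)(\overline{K})$, hence $G'_i=G'''_i$ by smoothness of $G'_i$, and the $p^i$-torsion of $G_i$ falls out as a byproduct. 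Your Step 2 gestures at ``Frobenius-type relations'' without this lemma, which is a gap in detail but repairable.

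The real problem is your proposed route for $G'''_i\subseteq G'_i$. You want to deduce it from the abstract fact that $p\times\colon A[p^{i+2}]/A[p^{i+1}]\to A[p^{i+1}]/A[p^i]$ is injective for a $p$-power torsion abelian group. That lemma is a formal consequence of a filtration \emph{being} the $p$-power torsion filtration; it cannot be used to prove that the geometric filtration $\{G_i\}$ \emph{coincides with} the torsion filtration, which is what $G'''_i\subseteq G'_i$ asserts. Indeed, a filtration whose graded pieces are $p$-torsion can be strictly finer than the torsion filtration (e.g.\ $0\subset\Z/p\times 0\subset\Z/p\times\Z/p$), and a priori $G(i)\cong G/G_i$ is itself $p^{n_0-i}$-torsion, so ``a $p^i$-torsion element of $G$ must die in $G(i)$'' has no group-theoretic reason to hold. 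In the paper this inclusion is not obtained from any torsion bookkeeping but from the geometric normality criterion of Section \ref{geomnormalsection}; without importing Lemma \ref{element} (or an equivalent statement that $f^{p^i}\in\cO_{C_{\overline{K}},s}$ forces $f$ to be integral over $\cO_{C_{\overline{K}},s}\otimes_K K^{1/p^i}$, hence to lie in the normalization), your argument for the hardest inclusion does not close.
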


\begin{proof}
First, by Lemmas \ref{fundamentalstructure}.3 and 4, we have $G_{i}=G'_{i}$.

Next, we show that $G'_{i}=G'''_{i}$ holds.
Write $S$ for the closed subset of $C_{\overline{K}}$ consisting of singular points.
For any $s\in S$, write $s_{n_{0}}$ (resp.\,$s_{i}$) for the closed point of $C(n_{0})_{\overline{K}}$ (resp.\,$C(i)_{\overline{K}}$) whose image in $C_{\overline{K}}$ is $s$.
Write $\pi_{i}$ (resp.\,$\pi$) for the natural morphism $C(i)_{\overline{K}}\to C_{\overline{K}}$ (resp.\,$C(n_{0})_{\overline{K}}\to C_{\overline{K}}$).
We have a commutative diagram with exact horizontal lines
\begin{equation}
\xymatrix{
0\ar[r]
&\cO^{\ast}_{C_{\overline{K}}}\ar[r]\ar@{=}[d]
&\pi_{i\ast}\cO^{\ast}_{C(i)_{\overline{K}}}\ar[r]\ar[d]
&\underset{s\in S}{\bigoplus}(\cO_{C(i)_{\overline{K}},s_{i}}^{\ast}/\cO_{C_{\overline{K}},s}^{\ast})\ar[r]\ar[d]
&0\\
0\ar[r]
&\cO^{\ast}_{C_{\overline{K}}}\ar[r]
&\pi_{\ast}\cO^{\ast}_{C(n_{0})_{\overline{K}}}\ar[r]
&\underset{s\in S}{\bigoplus}(\cO_{C(n_{0})_{\overline{K}},s_{n_{0}}}^{\ast}/\cO_{C_{\overline{K}},s}^{\ast})\ar[r]
&0.
}
\label{forPic}
\end{equation}
Here, the sheaves $\cO_{C(n_{0})_{\overline{K}},s_{n_{0}}}^{\ast}/\cO_{C_{\overline{K}},s}^{\ast}$ and $\cO_{C(n_{0})_{\overline{K}},s_{n_{0}}}^{\ast}/\cO_{C(i)_{\overline{K}},s_{i}}^{\ast}$ are skyscraper sheaves whose supports are concentrated on $\{s\}$, from which it follows that the first cohomology groups of these sheaves are trivial.
By taking the long exact sequences of the cohomology groups for (\ref{forPic}), we have the following commutative diagram with exact horizontal lines:
\begin{equation}
\xymatrix{
0\ar[r]
&\underset{s\in S}{\bigoplus}(\cO_{C(i)_{\overline{K}},s_{i}}^{\ast}/\cO_{C_{\overline{K}},s}^{\ast})\ar[r]\ar[d]
&H^{1}(C_{\overline{K}},\cO^{\ast}_{C_{\overline{K}}})\ar[r]\ar@{=}[d]
&H^{1}(C(i)_{\overline{K}},\cO^{\ast}_{C(i)_{\overline{K}}})\ar[r]\ar[d]
&0\\
0\ar[r]
&\underset{s\in S}{\bigoplus}(\cO_{C(n_{0})_{\overline{K}},s_{n_{0}}}^{\ast}/\cO_{C_{\overline{K}},s}^{\ast})\ar[r]
&H^{1}(C_{\overline{K}},\cO^{\ast}_{C_{\overline{K}}})\ar[r]
&H^{1}(C(n_{0})_{\overline{K}},\cO^{\ast}_{C(n_{0})_{\overline{K}}})\ar[r]
&0.
}
\label{cohPic}
\end{equation}
On the other hand, we have the following commutative diagram with exact horizontal lines:
\begin{equation}
\xymatrix{
0\ar[r]
&G'_{i}(\overline{K})\ar[r]\ar[d]
&\Pic_{C_{\overline{K}}/\overline{K}}(\overline{K})\ar[r]\ar@{=}[d]
&\Pic_{C(i)_{\overline{K}}/\overline{K}}(\overline{K})\ar[r]\ar[d]
&0\\
0\ar[r]
&G(\overline{K})\ar[r]
&\Pic_{C_{\overline{K}}/\overline{K}}(\overline{K})\ar[r]
&\Pic_{C(n_{0})_{\overline{K}}/\overline{K}}(\overline{K})\ar[r]
&0.
}
\label{Picexact}
\end{equation}
Hence, we have natural isomorphisms
$$G(\overline{K})=\underset{s\in S}{\bigoplus}(\cO_{C(n_{0})_{\overline{K}},s_{n_{0}}}^{\ast}/\cO_{C_{\overline{K}},s}^{\ast})$$
and
$$G'_{i}(\overline{K})=\underset{s\in S}{\bigoplus}(\cO_{C(n_{0})_{\overline{K}},s_{n_{0}}}^{\ast}/\cO_{C(i)_{\overline{K}},s_{i}}^{\ast}).$$
From these isomorphisms and Lemma \ref{element}, we obtain
$$G'_{i}(\overline{K})=\Ker(p^{i}\times\colon G(\overline{K})\to G(\overline{K}))\simeq\Ker(p^{i}\times\colon G\to G)(\overline{K}).$$
Since $G'_{i}$ is a smooth (and hence reduced) algebraic subgroup of $\Pic_{C_{\overline{K}}/\overline{K}}$ by Proposition \ref{connunip}, we have a natural isomorphism $G'_{i}\simeq G'''_{i}$.

Finally, we see that there are natural inclusions $G'_{i}\hookrightarrow G''_{i}\hookrightarrow G'''_{i}$.
By definition, we have a natural inclusion $G''_{i}\hookrightarrow G'''_{i}$.
The fact that $G'_{i}$ is a connected smooth unipotent algebraic group over $\overline{K}$ follows from Proposition \ref{connunip}.
Then the $\overline{K}$-splitness of $G'_{i}$ follows from {\cite[Expos\'e XVII Proposition 4.1.1]{SGA3}}.
Since $G'_{i}=G'''_{i}$, $G'_{i}$ is $p^{i}$-torsion.
Hence, we have a natural inclusion $G'_{i}\hookrightarrow G''_{i}$.
Therefore, we finish the proof of Theorem \ref{overclosed}.
\end{proof}

\begin{rem}
\begin{enumerate}
\item
Note that, for an algebraic group $H$ over an algebraically closed field, $H$ is $p^{i}$-torsion, split, and unipotent if and only if $p^{i}$-torsion, connected, smooth, and unipotent by {\cite[Expos\'e XVII Proposition 4.1.1]{SGA3}}.
\item
Since the maximal reduced closed subscheme of an algebraic group over a perfect field is again an algebraic group, $G'''_{i}$ is an algebraic group over $\overline{K}$.
Note that $\Ker(p\times\colon G\to G)$ is reduced if and only if $G_{1}=G$.
Indeed, if $G=G_{1}$, $\Ker(p\times\colon G\to G)=G$.
On the other hand, if $G\neq G_{i}$, $pG$ is nontrivial algebraic subgroup of $G$.
Since $G$ and $pG$ are smooth and the natural homomorphism $G\to pG$ induces a trivial homomorphism between their Lie algebras, the algebraic group $\Ker(G\to pG)$ is not smooth.
\end{enumerate}
\label{whyred}
\end{rem}

\begin{cor}
For every natural number $i$, we have a natural homomorphism
$p\times\colon G_{i+2}/G_{i+1}\to G_{i+1}/G_{i}$
inducing an injection $(G_{i+2}/G_{i+1})(\overline{K})\hookrightarrow(G_{i+1}/G_{i})(\overline{K})$.
\label{main1cor}
\end{cor}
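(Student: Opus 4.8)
The plan is to build the map at the level of algebraic groups and then to check injectivity on $\overline{K}$-points, where the statement collapses to an elementary fact about $p$-power torsion abelian groups. By Theorem \ref{overclosed}, for every $0\leq j\leq n_{0}$ we have $G_{j}(\overline{K})=\Ker(p^{j}\times\colon G(\overline{K})\to G(\overline{K}))$, the subgroup of $p^{j}$-torsion elements of the abelian group $A:=G(\overline{K})$. First I would observe that the endomorphism $p\times\colon G\to G$ carries $G_{j+1}$ into $G_{j}$ for each $j$: the scheme-theoretic image of the reduced scheme $G_{j+1}$ under $p\times$ is a reduced closed subgroup of $G$ whose $\overline{K}$-points are $p\cdot A[p^{j+1}]\subseteq A[p^{j}]=G_{j}(\overline{K})$, and since $G_{j}$ is a reduced closed subscheme and $\overline{K}$ is algebraically closed, this image is contained in $G_{j}$. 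Applying this with $j=i+1$ and $j=i$ and composing with the quotient $G_{i+1}\to G_{i+1}/G_{i}$, the map $G_{i+2}\xrightarrow{p\times}G_{i+1}\to G_{i+1}/G_{i}$ kills $G_{i+1}$, and therefore factors through the desired homomorphism $p\times\colon G_{i+2}/G_{i+1}\to G_{i+1}/G_{i}$.

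For injectivity on $\overline{K}$-points, I would first pass from the algebraic quotients to quotients of point groups. Since $G_{i+2}$ and $G_{i+1}$ are smooth over the algebraically closed field $\overline{K}$, the quotient map is smooth and surjective, hence surjective on $\overline{K}$-points, so
\[
(G_{i+2}/G_{i+1})(\overline{K})=G_{i+2}(\overline{K})/G_{i+1}(\overline{K})=A[p^{i+2}]/A[p^{i+1}],
\]
and likewise $(G_{i+1}/G_{i})(\overline{K})=A[p^{i+1}]/A[p^{i}]$, the induced map being multiplication by $p$. This reduces the claim to the elementary statement that for a $p$-power torsion abelian group $A$ the homomorphism $p\times\colon A[p^{i+2}]/A[p^{i+1}]\to A[p^{i+1}]/A[p^{i}]$ is injective: if $a\in A[p^{i+2}]$ has $pa\in A[p^{i}]$, then $p^{i+1}a=p^{i}(pa)=0$, so $a\in A[p^{i+1}]$ and its class vanishes.

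I expect the only real subtlety to lie in the first paragraph, namely in legitimately producing a \emph{morphism} of algebraic groups rather than merely a map of point sets: one must know that $p\times$ sends the reduced subscheme $G_{j+1}$ into $G_{j}$ scheme-theoretically. This is exactly where Theorem \ref{overclosed} does the work, identifying each $G_{j}$ with the reduced $p^{j}$-torsion and guaranteeing that all the groups in sight are reduced; over the algebraically closed field $\overline{K}$ the scheme-theoretic image of a reduced group is reduced and equals the closure of its $\overline{K}$-points, so the containment of point images suffices to conclude the containment of closed subschemes. Once the morphism is in place, the remaining steps are the routine identification of $\overline{K}$-points of the smooth quotients and the one-line torsion computation.
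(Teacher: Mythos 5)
Your proof is correct and follows essentially the same route as the paper's: first establish $pG_{i+1}\subset G_{i}$ via Theorem \ref{overclosed} to obtain the homomorphism of quotients, then identify the $\overline{K}$-points of $G_{j+1}/G_{j}$ with $A[p^{j+1}]/A[p^{j}]$ and conclude by the elementary fact about $p$-power torsion abelian groups. The only (harmless) variation is in the first step, where the paper deduces $pG_{i+1}\subset G_{i}$ from the maximality characterization $G_{i}=G''_{i}$ (the largest $p^{i}$-torsion $\overline{K}$-split unipotent subgroup), whereas you use the reducedness characterization $G_{i}=G'''_{i}$ together with a scheme-theoretic image argument; both are immediate consequences of the same theorem.
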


\begin{proof}
Since $G_{i+1}$ is connected smooth unipotent $\overline{K}$-split and $p^{i+1}$-torsion by Theorem \ref{overclosed}, $pG_{i+1}$ is connected smooth unipotent $\overline{K}$-split and $p^{i}$-torsion.
Hence, by Theorem \ref{overclosed}, we have $pG_{i+1}\subset G_{i}$ and a homomorphism
$$p\times\colon G_{i+2}/G_{i+1}\to G_{i+1}/G_{i}.$$
Again by Theorem \ref{overclosed}, we have a natural isomorphism
$$(G_{i+1}/G_{i})(\overline{K})\simeq \Ker(p^{i+1}\times\colon G(\overline{K})\to G(\overline{K}))/\Ker(p^{i}\times\colon G(\overline{K})\to G(\overline{K})).$$
Therefore, the induced homomorphism
$$(G_{i+2}/G_{i+1})(\overline{K})\hookrightarrow(G_{i+1}/G_{i})(\overline{K})$$
is injective.
\end{proof}

\begin{rem}
By Corollary \ref{main1cor}, Lemma \ref{fundamentalstructure}, and Theorem \ref{overclosed}, it holds that $\dim G_{i+2}/G_{i+1}\leq\dim  G_{i+1}/G_{i}$, and hence we have $g_{i+1}-g_{i+2}\leq g_{i}-g_{i+1}$.
Note that a stronger inequality $p(g_{i+1}-g_{i+2})\leq g_{i}-g_{i+1}$ follows from Corollary \ref{geqp}.
\label{gpgenuschange}
\end{rem}

\label{main1section}

\section{The case of simple extensions} 
In this section, we give calculations of conductors and $\delta$-invariants.
Here, $\delta$-invariants can be regarded as certain local variants of ``genus changes'' of curves.

Let $K$ be a field of characteristic $p$, $R$ a discrete valuation ring over $K$, $\fm$ the maximal ideal of $R$, $\varpi$ a uniformizer of $R$, $v$ the normalized valuation of $R$, and $L$ the residue field of $R$.
Let $x$ be an element of $K$ satisfying $x\in K\setminus K^{p}$.
Suppose that $R$ is $\{x\}$-integral.
Write $R(1_{x})$ for the normalization of $R\otimes_{K}K(x^{1/p})$ in its field of fractions, which is a discrete valuation ring.
(Note that, in Section \ref{geomnormalsection}, we write $R(1_{\{x\}})$ for $R(1_{x})$.)
Write $v_{1}$ (resp.\,$L(1_{x})$; $\fm(1_{x})$; $\varpi_{1}$; $e_{1}$; $f_{1}$) for the valuation of $R(1_{x})$ (resp.\,the residue field of $R(1_{x})$; the maximal ideal of $R(1_{x})$; a uniformizer of $R(1_{x})$; the ramification index of $R(1_{x})$ over $R$; the residue degree of $R(1_{x})$ over $R$).
Suppose that $R(1_{x})$ is finite over $R$ (or equivalently, $\{x\}$-Japanese, cf.\,Proposition \ref{B-prop}.5).
Then we have $e_{1}f_{1}=p$ by {\cite[I \S4 Proposition 10]{Ser}}.
Moreover, note that $R, R\otimes_{K}K(x^{1/p})$, and $R(1_{x})$ are local Noetherian domains of dimension $1$.
For such a local domain $A$, write $\widehat{A}$ for the completion of $A$.
Since $R(1_{x})$ is finite over $R$, we have $\widehat{A}\simeq A\otimes_{R}\widehat{R}$.
Since $\widehat{R}$ is a flat over $R$, all such $\widehat{A}$ are local domains.

\begin{not-dfn}
\begin{enumerate}
\item 
We write $\delta_{10}$ for the natural number $$\length_{R\otimes_{K}K(x^{1/p})}(R(1_{x})/(R\otimes_{K}K(x^{1/p}))),$$
and refer to this as the {\it $\delta$-invariant} (of $R\otimes_{K}K(x^{1/p})$) (cf.\,\cite[0C3Q]{Stacks}).
\item
If $L$ is finite over $K$, we write
$g_{10}$ for the natural number $$\dim_{K(x^{1/p})}(R(1_{x})/(R\otimes_{K}K(x^{1/p}))).$$
\item
We write $\cC_{10}$ for the largest ideal of $R(1_{x})$ contained in $R\otimes_{K}K(x^{1/p})$ and refer to this as {\it the conductor} (cf.\,{\cite[III \S6]{Ser}} and {\cite[Definition 2.13]{PW}}).
\end{enumerate}
\label{deltaconddef}
\end{not-dfn}

\begin{rem}
\label{remdelta}
In \cite[Section 5]{IIL}, the definition of the $\delta$-invariant is a little different.
In their definition, they replace $K(x^{1/p})$ in our definition with an algebraic closure of $K$.
Since we want to consider the step-by-step behavior of invariants, we follow the convention in \cite[0C3Q]{Stacks}.
\end{rem}

\begin{lem}
\begin{enumerate}
\item
If $x^{1/p}\notin L$, then $R(1_{x})=R\otimes_{K}K(x^{1/p})$.
Moreover, $R$ is $\{x\}$-normal.
\item
$\delta_{10}=\length_{R}(R(1_{x})/(R\otimes_{K}K(x^{1/p})))$
\item
If $L$ is finite over $K$ and $x^{1/p}\in L$, we have
$$g_{10}=\delta_{10}[L:K(x^{1/p})].$$
\end{enumerate}
\label{calcul}
\end{lem}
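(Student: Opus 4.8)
The plan is to analyze the finite free rank-$p$ extension $S:=R\otimes_{K}K(x^{1/p})=R[t]/(t^{p}-x)$ of $R$ and to reduce all three assertions to a single computation of the residue field of $S$. First I would record that $S$ is a one-dimensional Noetherian local domain: it is a domain by $\{x\}$-integrality (it has a field of fractions, as used in the running setup), it is finite free of rank $p$ over $R$, and its normalization $R(1_{x})$ is a discrete valuation ring, hence local; applying lying-over to the integral extension $S\subseteq R(1_{x})$ then forces $S$ to have a unique maximal ideal $\fn$. Since $x\in K\setminus K^{p}$ is a unit of $R$, its residue $\bar{x}\in L$ is nonzero, and I would compute $S/\fm S\simeq L[t]/(t^{p}-\bar{x})$, which is the object governing the whole lemma.

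For assertion 1, assume $x^{1/p}\notin L$, i.e.\ $\bar{x}\notin L^{p}$. Then $t^{p}-\bar{x}$ is irreducible over $L$, so $S/\fm S\simeq L(x^{1/p})$ is a field; hence $\fm S=\fn$ is maximal and generated by the uniformizer $\varpi$. A one-dimensional Noetherian local domain with principal maximal ideal is a discrete valuation ring, so $S$ is normal and therefore coincides with its own normalization $R(1_{x})$. The $\{x\}$-normality of $R$ is then immediate from Proposition \ref{B-prop}.2, since that asserts $R$ is $\{x\}$-normal if and only if $R\otimes_{K}K(x^{1/p})=S$ is normal.

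The crux for the remaining assertions is the residue field $\kappa:=S/\fn$, together with the finite-length $S$-module $M:=R(1_{x})/S$ (finite over $S$ with the same fraction field). The key dichotomy I would establish is that $\kappa=L$ whenever $M\neq 0$: by assertion 1, $x^{1/p}\notin L$ forces $M=0$, so if $M\neq 0$ then $\bar{x}\in L^{p}$, whence $t^{p}-\bar{x}=(t-\beta)^{p}$ for some $\beta\in L$ and $S/\fm S\simeq L[t]/(t-\beta)^{p}$ is local with residue field $L$, giving $\kappa=L$. With this, assertions 2 and 3 follow from multiplicativity along an $S$-composition series of $M$ whose factors are all isomorphic to $\kappa$: each factor has $R$-length $[\kappa:L]$ and $K(x^{1/p})$-dimension $[\kappa:K(x^{1/p})]$, so $\length_{R}M=[\kappa:L]\cdot\delta_{10}$ and $g_{10}=[\kappa:K(x^{1/p})]\cdot\delta_{10}$. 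When $M=0$ both identities read $0=0$; when $M\neq 0$ the dichotomy gives $\kappa=L$, so $[\kappa:L]=1$ yields assertion 2, and $[\kappa:K(x^{1/p})]=[L:K(x^{1/p})]$ yields assertion 3 (using $K(x^{1/p})\subseteq L$, valid since $x^{1/p}\in L$, and $L/K$ finite). The only genuinely delicate point is the structural input that $S$ is local with residue field exactly $L$ precisely when the singularity is nontrivial; once the implication ``$M\neq 0\Rightarrow\kappa=L$'' is secured through the factorization of $t^{p}-\bar{x}$, the length and dimension identities are routine bookkeeping.
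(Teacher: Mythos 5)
Your argument is correct and takes essentially the same route as the paper: assertion 1 reduces to showing that $R\otimes_{K}K(x^{1/p})$ is already a discrete valuation ring when $x^{1/p}\notin L$ (where the paper cites \cite[I \S 6, Proposition 15]{Ser}, you verify directly that $L[t]/(t^{p}-\bar{x})$ is a field, so $\fm S$ is a principal maximal ideal), and assertions 2 and 3 are the same length/dimension bookkeeping based on the observation that the residue field of $R\otimes_{K}K(x^{1/p})$ is $L$ once $x^{1/p}\in L$. There are no gaps.
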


\begin{proof}
Assertion 1 follows from {\cite[I \S6 Proposition 15]{Ser}} and Proposition \ref{B-prop}.2.
To show assertion 2, we may assume $x^{1/p}\in L$ by assertion 1.
Since the residue field of $R\otimes_{K}K(x^{1/p})$ is naturally isomorphic to $L$, we have assertion 2.
Moreover, if $L$ is finite over $K$, then we have
\begin{align*}
[L:K(x^{1/p})] &\length_{R\otimes_{K}K(x^{1/p})}(R(1_{x})/(R\otimes_{K}K(x^{1/p}))) \\
= &\dim_{K(x^{1/p})}(R(1_{x})/(R\otimes_{K}K(x^{1/p}))).
\end{align*}
Hence, assertion 3 holds.
\end{proof}

\begin{nota}
For any lift $r\in R$ of $x^{1/p}$, we shall write $q_{r}$ for $v(r^{p}-x)$.
Moreover, we shall write
$$q(x)=\sup_{r\in R}v(r^{p}-x).$$
Note that if $x^{1/p}\in L$, $q(x)$ coincides with $\underset{r}{\sup}\,q_{r}$, where $r$ ranges over all the lifts of $x^{1/p}$.
Moreover, note that if $x^{1/p}\notin L$, $q(x)=0$.
\label{notationq}
\end{nota}
In the following, suppose that $x^{1/p}\in L$.
Note that $pv_{1}(r-x^{1/p})=v_{1}(r^{p}-x)=e_{1}v(r^{p}-x)=e_{1}q_{r}$.
The following is the first main theorem of this section.
\begin{thm}
\begin{enumerate}
\item
The inequalities $1\leq q(x)<\infty$ holds (cf.\,Remark \ref{metricinterprete}).
\item
$e_{1}=p$ if and only if $q(x)$ is not divisible by $p$.
\item
If $e_{1}=p$, we have
\begin{align*}
\cC_{10}=\fm(1_{x})^{(p-1)(q(x)-1)}\\
\delta_{10}=\frac{(p-1)(q(x)-1)}{2}.
\end{align*}
In particular, $R$ is $\{x\}$-normal if and only if $q(x)=1$.
\item
If $f_{1}=p$, we have
\begin{eqnarray*}
\cC_{10}=\fm(1_{x})^{\frac{(p-1)q(x)}{p}}\\
\delta_{10}=\frac{(p-1)q(x)}{2}.
\end{eqnarray*}
In particular, if $p\neq 2$, then $\delta_{10}$ is divisible by $p$.
\item
We have
$$q(x)=\max_{r'\in R, x'\in K^{p}(x)\setminus K^{p}} v(r'^{p}-x')=\max_{r''\in R^{p}, x'\in K^{p}(x)\setminus K^{p}}v(r''-x').$$
In particular, for any element $x'\in K^{p}(x)\setminus K^{p}$, we have $q(x')=q(x)$ (cf.\,Remark \ref{metricinterprete}).
\end{enumerate}
\label{q}
\end{thm}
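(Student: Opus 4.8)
The plan is to pass to the completion and describe $\widehat{R(1_x)}$ together with its subring $\widehat{R\otimes_K K(x^{1/p})}$ explicitly in terms of $v_1$, reducing the computation of $\delta_{10}$ and $\cC_{10}$ to a combinatorial count. Write $y=x^{1/p}\in R(1_x)$, so $R\otimes_K K(x^{1/p})=R[y]$ with $y^p=x$, and recall the identity $p\,v_1(y-r)=e_1 q_r$ noted before the theorem. For part 1, the bound $q(x)\ge 1$ is immediate, since any lift $r$ of $x^{1/p}\in L$ satisfies $r^p\equiv x\pmod{\fm}$. For finiteness I would argue by contradiction: if $q(x)=\infty$ there is a sequence $r_n\in R$ with $v(r_n^p-x)\to\infty$, and since $v(r_n-r_m)=\tfrac1p v(r_n^p-r_m^p)\to\infty$ the $r_n$ converge in $\widehat R$ to an $a$ with $a^p=x$, so that $\widehat R\otimes_K K(x^{1/p})=\widehat R[T]/(T-a)^p$ is non-reduced; this contradicts Proposition \ref{B-prop}.7 (the hypothesis that $R(1_x)$ is finite over $R$ makes $R$ $\{x\}$-Japanese by Proposition \ref{B-prop}.5, .6, whence $\widehat R$ is $\{x\}$-integral). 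Thus $q(x)<\infty$ and the supremum is attained by a lift $r_0$; set $q=q(x)=v(\pi_0)$ with $\pi_0:=r_0^p-x$ and $z:=y-r_0$, so $v_1(z)=e_1q/p$.

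For part 2, the integrality of $v_1(z)=e_1q/p$ forces $p\mid e_1q$, so $p\nmid q$ already gives $e_1=p$. For the converse I would show that $e_1=p$ and $p\mid q$ is impossible: writing $q=pq'$ and letting $\bar c\in L^{\times}$ be the residue of $\pi_0/\varpi^{q}$, either $-\bar c\in L^p$, in which case replacing $r_0$ by $r_0+\varpi^{q'}t$ for a lift $t$ of $(-\bar c)^{1/p}$ cancels the leading term and gives $v((r_0+\varpi^{q'}t)^p-x)>q$, contradicting maximality; or $-\bar c\notin L^p$, in which case $z/\varpi^{q'}$ is a unit of $R(1_x)$ whose residue satisfies $\overline{(z/\varpi^{q'})}^{\,p}=-\bar c\notin L^p$, forcing $L(1_x)\supsetneq L$ and hence $f_1=p$, contrary to $e_1=p$.

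For the main computation (parts 3, 4), since $\delta_{10}=\length_R(R(1_x)/R[y])$ by Lemma \ref{calcul}.2 and length is preserved under completion, I would work inside $\mathcal{O}:=\widehat{R(1_x)}$ with its free rank-$p$ $\widehat R$-submodule $M:=\widehat{R[y]}=\widehat R[z]=\bigoplus_{i=0}^{p-1}\widehat R z^i$, where $v_1(z^i)=i\,e_1q/p$. In the case $e_1=p$ (so $L(1_x)=L$, $v_1(\varpi)=p$, $v_1(z)=q$ with $\gcd(p,q)=1$ by part 2) the residues $iq\bmod p$ are pairwise distinct, so $v_1(\sum c_iz^i)=\min_i(p\,v(c_i)+iq)$ is attained by a unique index and no cancellation occurs; hence $v_1(M\setminus 0)$ is exactly the numerical semigroup $S=\langle p,q\rangle$, and the associated graded of $M$ for the $\fm(1_x)$-adic filtration is $\bigoplus_{n\in S}L$ inside $\bigoplus_{n\ge 0}L=\mathrm{gr}\,\mathcal{O}$. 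Consequently $\delta_{10}=|\N\setminus S|$ and $\cC_{10}=\fm(1_x)^{c}$ with $c$ the conductor of $S$; the Frobenius coin problem for the coprime pair $(p,q)$ gives $|\N\setminus S|=(p-1)(q-1)/2$ and $c=(p-1)(q-1)$, which is part 3 (with the normality criterion $\delta_{10}=0\iff q=1$). In the case $f_1=p$ (so $e_1=1$, $\varpi$ a uniformizer of $R(1_x)$, $p\mid q$, $q=pq'$) I would instead use the unramified $\widehat R$-basis $\{1,w,\dots,w^{p-1}\}$ of $\mathcal{O}$ with $w:=z/\varpi^{q'}$ a unit; then $z^i=\varpi^{iq'}w^i$ yields $M=\bigoplus_i\varpi^{iq'}\widehat R w^i$ and $\mathcal{O}=\bigoplus_i\widehat R w^i$, so $\mathcal{O}/M\cong\bigoplus_{i=0}^{p-1}\widehat R/\varpi^{iq'}$, giving $\delta_{10}=\sum_{i=0}^{p-1}iq'=(p-1)q/2$ and $\cC_{10}=\varpi^{(p-1)q'}\mathcal{O}=\fm(1_x)^{(p-1)q/p}$; the divisibility of $\delta_{10}$ by $p$ for $p\neq 2$ is read off from $\delta_{10}=p\cdot q'(p-1)/2$.

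Finally, for part 5 the two displayed maxima agree trivially via $r''=r'^{p}$, so it suffices to prove $q(x')=q(x)$ for $x'\in K^p(x)\setminus K^p$. Writing $x'=\sum b_i^p x^i$ with $b_i\in K$ gives $x'^{1/p}=\sum b_i(x^{1/p})^i\in K(x^{1/p})$, and since $[K^p(x):K^p]=p$ is prime we get $K(x'^{1/p})=K(x^{1/p})$; hence $R\otimes_K K(x'^{1/p})=R\otimes_K K(x^{1/p})$ and $R(1_{x'})=R(1_x)$ share the same $e_1,f_1$ and the same $\delta_{10}$, and $x'$ inherits all standing hypotheses (in particular $\overline{x'^{1/p}}=\sum\bar b_i(\bar x^{1/p})^i\in L$), so the formulas of parts 3, 4 force $q(x')=q(x)$. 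The main obstacle is the $e_1=p$ case: identifying $v_1(M\setminus 0)$ with $\langle p,q\rangle$ (the uniqueness of the minimizing index, hence absence of cancellation) and converting the semigroup's gap count and conductor into the module length $\delta_{10}$ and the ideal $\cC_{10}$; the dichotomy argument in part 2 and the completion argument for finiteness in part 1 are the other delicate points.
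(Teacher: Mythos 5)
Your proposal is correct and follows essentially the same route as the paper: reduce to the completion, prove finiteness of $q(x)$ via $\{x\}$-integrality of $\widehat R$, handle part 2 by the leading-term dichotomy (this is exactly the content of the paper's Lemma \ref{q<q}), compute the ramified case via the numerical semigroup $\langle p,q(x)\rangle$ and the Frobenius coin problem (the paper does the same after choosing a Cohen coefficient field, via its Lemma \ref{coin}), decompose $\widehat{R(1_x)}=\bigoplus_i\widehat R\,w^i$ in the unramified case (the paper's Lemma \ref{choiceofr}, which also supplies the justification that $\bar w\notin L$ so that $\{1,w,\dots,w^{p-1}\}$ is indeed a basis), and deduce part 5 from $K(x'^{1/p})=K(x^{1/p})$ together with parts 3 and 4. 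No gaps.
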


\begin{rem}
There exists an element of $R$ which is one of the nearest elements to $x$ with respect to the metric of $R(1_{x})$, which attains the distance between $R^{p}$ and $K^{p}(x)\setminus K^{p}(=K^{p}(x)\setminus R^{p})$.
\label{metricinterprete}
\end{rem}

\begin{rem}
\label{conductorvsgenuschange}
From Theorems \ref{q}.2 and 3, we obtain a relation
\begin{align*}
2\length_{R}(R(1_{x})/R\otimes_{K}K(x^{1/p}))
&=2\delta_{10}\\
&=[L(1_{x}):L]\length_{R(1_{x})}(R(1_{x})/\cC_{10})\\
&=\length_{R}(R(1_{x})/\cC_{10})
\end{align*}
of the conductor and the $\delta$-invariant.
As we will see below, this equation follows from the theory of duality of modules.
Let $A\subset B$ be an extension of $1$-dimensional Gorenstein local domains such that $B/A$ is an $A$-module of finite length.
We will show that
\begin{align}
\label{2times}
2 \length_{A}(B/A) = \length_{A} (B/\cC),
\end{align}
where $\cC$ is the image of 
\[
\omega_{B/A}
= \Hom_{A}(B, A)
\hookrightarrow A
\]
via the trace map.
Note that, in the setting of Notation-Definition \ref{deltaconddef}, the extension $R\otimes_{K}(K(x^{1/p}))\subset R(1_{x})$ satisfies the assumptions on $A\subset B$.
We also note that, in this case, $\cC$ is no other than $\cC_{10}$, and $\length_{R}$ and $\length_{A}$ have the same value for $A$-modules.
To show that (\ref{2times}) holds, it suffices to show that 
\[
\length_{A}(A/\cC) = \length_{A}(B/A),
\]
since we have $(B/\cC)/(A/\cC)=B/A$.
By applying $R\Hom_{A}(-,A)$ to the exact sequence
\[
0 \rightarrow A \rightarrow B \rightarrow B/A \rightarrow 0,
\]
we obtain an exact sequence
\[
0 \rightarrow \Hom_{A}(B,A) \rightarrow \Hom_{A}(A,A) \rightarrow \Ext^{1}_{A}(B/A,A) \rightarrow 0,
\]
i.e.,\,we have $A/\cC\simeq\Ext^{1}_{A}(B/A,A)$.
By the local duality,
\[
\Ext^{1}_{A}(B/A,A) \simeq \Hom_{A}(B/A,E),
\]
where $E$ is an injective hull of the residue field of $A$.
Since the Matlis duality preserves the length of modules, we obtain the desired equality.
\end{rem}

\begin{proof}[Proof of Theorem \ref{q}.1]
Since $x^{1/p}\in L$, we have $1 \leq q(x)$.
Next we show that $q(x)<\infty$.
Note that, since $R$ is geometrically reduced over $K$, we have $x^{1/p}\notin R$.
Since $\widehat{R}$ is faithfully flat over $R$, we also have $x^{1/p}\notin\widehat{R}$.
Therefore, since $\widehat{R}$ is a closed subset of $\widehat{R(1_{x})}$, there exists an element $r$ of $R$ which satisfies $q_{r}=q(x)$.
\end{proof}

To show the rest of Theorem \ref{q}, we need some lemmas.

\begin{lem}
For any lift $r\in R$ of $x^{1/p}$, $q_{r}<q(x)$ if and only if $p$ divides $q_{r}$ and the image of $r-x^{1/p}$ in $\fm(1_{x})^{e_{1}q_{r}/p}/\fm(1_{x})^{(e_{1}q_{r}/p)+1}$ is contained in $\fm^{q_{r}/p}/\fm^{(q_{r}/p)+1}$.
\label{q<q}
\end{lem}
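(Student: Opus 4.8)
The plan is to reduce the whole statement to a single leading-term cancellation argument, exploiting that in characteristic $p$ the Frobenius is additive. First I would set $s:=r-x^{1/p}\in R(1_{x})$ and record the two elementary facts that drive everything. Since $\mathrm{char}=p$ we have $s^{p}=(r-x^{1/p})^{p}=r^{p}-x$, so $v_{1}(s)=\tfrac1p v_{1}(r^{p}-x)=\tfrac{e_{1}q_{r}}{p}$; and any other lift $r'$ of $x^{1/p}$ differs from $r$ by an element $\delta:=r'-r\in\fm\subset R$, because $r$ and $r'$ have the same image $x^{1/p}$ in $L$. Since $q(x)=\sup_{r'}q_{r'}$ is finite and attained by Theorem \ref{q}.1, the inequality $q_{r}<q(x)$ is equivalent to the assertion that $q_{r}$ can be strictly improved, i.e.\ that there is a $\delta\in\fm$ with $q_{r+\delta}>q_{r}$.

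The core computation is then the following. For $r'=r+\delta$ with $\delta\in R$, additivity of Frobenius gives $r'^{p}-x=(r^{p}-x)+\delta^{p}=(s+\delta)^{p}$, whence $q_{r'}=\tfrac1{e_{1}}v_{1}(r'^{p}-x)=\tfrac{p}{e_{1}}v_{1}(s+\delta)$. Comparing with $q_{r}=\tfrac{p}{e_{1}}v_{1}(s)$, I conclude that $q_{r'}>q_{r}$ holds if and only if $v_{1}(s+\delta)>v_{1}(s)=\tfrac{e_{1}q_{r}}{p}$, that is, precisely when $\delta\in R$ cancels the leading term of $s$. The improvability of $q_{r}$ is therefore equivalent to the existence of some $\delta\in R$ whose class in $\fm(1_{x})^{e_{1}q_{r}/p}/\fm(1_{x})^{(e_{1}q_{r}/p)+1}$ equals minus the class of $s$.

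Next I would translate this existence statement into the subgroup condition in the lemma. Because $v_{1}|_{R}=e_{1}v$, an element $\delta\in R$ has $v_{1}(\delta)=\tfrac{e_{1}q_{r}}{p}$ exactly when $v(\delta)=\tfrac{q_{r}}{p}$; in particular a suitable $\delta$ can exist only if $p\mid q_{r}$, and then its class lies in the image of $\fm^{q_{r}/p}/\fm^{(q_{r}/p)+1}$. Conversely, if $p\mid q_{r}$ and the class of $s$ already lies in this subgroup, any representing $\delta\in\fm^{q_{r}/p}$ provides the desired cancellation. Combining with the previous paragraph, $q_{r}<q(x)$ holds if and only if $p\mid q_{r}$ and the image of $r-x^{1/p}$ in $\fm(1_{x})^{e_{1}q_{r}/p}/\fm(1_{x})^{(e_{1}q_{r}/p)+1}$ lies in $\fm^{q_{r}/p}/\fm^{(q_{r}/p)+1}$, which is the claim.

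The hard part will be the bookkeeping that makes the phrase ``the image of $r-x^{1/p}$ is contained in $\fm^{q_{r}/p}/\fm^{(q_{r}/p)+1}$'' literally meaningful and equivalent to the cancellation condition. Concretely, I must check that the natural map $\fm^{q_{r}/p}/\fm^{(q_{r}/p)+1}\to\fm(1_{x})^{e_{1}q_{r}/p}/\fm(1_{x})^{(e_{1}q_{r}/p)+1}$ induced by $R\hookrightarrow R(1_{x})$ is injective, so that the subgroup in the statement is genuinely a subgroup of the target. This reduces to verifying that an element of $\fm^{q_{r}/p}$ lands in $\fm(1_{x})^{(e_{1}q_{r}/p)+1}$ only if it already lies in $\fm^{(q_{r}/p)+1}$, which follows from $v_{1}|_{R}=e_{1}v$ together with the integrality of the valuations in both cases $e_{1}=1$ and $e_{1}=p$. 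I would also confirm once and for all that $e_{1}q_{r}/p\in\N$ throughout the argument, automatic when $e_{1}=p$ and guaranteed by the hypothesis $p\mid q_{r}$ when $e_{1}=1$, so that all the graded pieces above are the stated ones.
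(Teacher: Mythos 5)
Your proof is correct and follows essentially the same route as the paper's: both arguments reduce $q_r<q(x)$ to the existence of a perturbation $\delta\in R$ (the paper's $r'$) with $v_1(r+\delta-x^{1/p})>v_1(r-x^{1/p})$ via Frobenius additivity, observe that such a $\delta$ must have $v(\delta)=q_r/p$ (forcing $p\mid q_r$), and then translate leading-term cancellation into the graded-piece condition, using the injection $\fm^{q_r/p}/\fm^{(q_r/p)+1}\hookrightarrow\fm(1_x)^{e_1q_r/p}/\fm(1_x)^{(e_1q_r/p)+1}$. Your final paragraph's bookkeeping (injectivity of that map and integrality of $e_1q_r/p$) is exactly the point the paper records with the phrase ``we have a natural injection,'' so nothing is missing.
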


\begin{proof}
By the definition of $q(x)$, $q_{r}<q(x)$ if and only if there exists an element $r'\in R$ such that 
\begin{align}
v(r'^{p}+r^{p}-x)>v(r^{p}-x)(\Leftrightarrow v_{1}(r'+r-x^{1/p})>v_{1}(r-x^{1/p})).
\label{forq<q}
\end{align}
Since the inequality (\ref{forq<q}) holds only if $v(r'^{p})=v(r^{p}-x)$, we may assume that $p$ divides $q_{r}$.
Then we have a natural injection $$\fm^{q_{r}/p}/\fm^{(q_{r}/p)+1}\hookrightarrow \fm(1_{x})^{e_{1}q_{r}/p}/\fm(1_{x})^{(e_{1}q_{r}/p)+1}.$$
Then there exists $r'$ satisfying the inequality (\ref{forq<q}) if and only if the image of $r-x^{1/p}$ in $\fm(1_{x})^{e_{1}q_{r}/p}/\fm(1_{x})^{(e_{1}q_{r}/p)+1}$ is contained in $\fm^{q_{r}/p}/\fm^{(q_{r}/p)+1}$.
Indeed, if such an element $r'$ exists, then $r-x^{1/p}= -r' + (r'+r-x^{1/p})$ is contained in $\fm^{q_{r}/p}/\fm^{(q_{r}/p)+1}$, and the converse also holds in a similar way.
Hence, Lemma \ref{q<q} holds.
\end{proof}

\begin{lem}
$e_{1}=p$ if and only if there exists a lift $t\in R$ of $x^{1/p}$ such that $q_{t}$ is not divisible by $p$.
In this case, such $t$ satisfies $q(x)=q_{t}$.
In particular, $e_{1}=p$ if and only if $q(x)$ is not divisible by $p$.
\label{r-x}
\end{lem}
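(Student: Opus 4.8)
The plan is to exploit the identity $p\,v_{1}(r-x^{1/p})=e_{1}q_{r}$ recorded before the statement, together with Lemma \ref{q<q}, and to treat the two implications of the equivalence separately, reducing everything to the divisibility of $e_{1}q_{r}$ by $p$.

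First I would record two preliminary observations. Since $x^{1/p}\in L$, every $q_{r}$ with $r$ a lift of $x^{1/p}$ is a positive integer, and by Notation \ref{notationq} together with Theorem \ref{q}.1 the number $q(x)$ equals $\sup_{r}q_{r}$ over such lifts and is finite; as the $q_{r}$ form a nonempty set of natural numbers bounded above, this supremum is attained, so I may fix a lift $t_{0}$ with $q_{t_{0}}=q(x)$. Second, for any lift $r$ the quantity $v_{1}(r-x^{1/p})=e_{1}q_{r}/p$ is a nonnegative integer (as $r-x^{1/p}\neq 0$), so $p\mid e_{1}q_{r}$.

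Next I would prove that a lift $t$ with $p\nmid q_{t}$ forces $e_{1}=p$ and $q_{t}=q(x)$. From $p\mid e_{1}q_{t}$ and $p\nmid q_{t}$ we get $p\mid e_{1}$, hence $e_{1}=p$, as $e_{1}\in\{1,p\}$ by $e_{1}f_{1}=p$. Moreover, by Lemma \ref{q<q} the strict inequality $q_{t}<q(x)$ can hold only when $p\mid q_{t}$; since this fails, $q_{t}\geq q(x)$, and combined with $q_{t}\leq q(x)$ this gives $q_{t}=q(x)$. This yields one direction together with the clause ``such $t$ satisfies $q(x)=q_{t}$''.

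The converse — that $e_{1}=p$ produces such a lift — is the step I expect to be the main obstacle, and here Lemma \ref{q<q} does the real work. Assuming $e_{1}=p$ (so $f_{1}=1$ and $L(1_{x})=L$), I take $t_{0}$ with $q_{t_{0}}=q(x)$ and argue by contradiction that $p\nmid q(x)$. If instead $p\mid q_{t_{0}}$, then the natural injection $\fm^{q_{t_{0}}/p}/\fm^{(q_{t_{0}}/p)+1}\hookrightarrow\fm(1_{x})^{e_{1}q_{t_{0}}/p}/\fm(1_{x})^{(e_{1}q_{t_{0}}/p)+1}$ of Lemma \ref{q<q} is an $L$-linear injection between one-dimensional $L$-vector spaces (the target is one-dimensional over $L(1_{x})=L$ because $f_{1}=1$), hence an isomorphism. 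Consequently the image of $t_{0}-x^{1/p}$ automatically lies in the image of $\fm^{q_{t_{0}}/p}/\fm^{(q_{t_{0}}/p)+1}$, so Lemma \ref{q<q} forces $q_{t_{0}}<q(x)$, contradicting $q_{t_{0}}=q(x)$. Therefore $p\nmid q(x)=q_{t_{0}}$, and $t_{0}$ is the desired lift. Finally, the ``in particular'' assertion follows by combining the two implications: $e_{1}=p$ gives $p\nmid q(x)$ by the converse, while $p\nmid q(x)$, applied to a lift attaining $q(x)$, gives $e_{1}=p$ by the first implication.
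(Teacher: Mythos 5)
Your proposal is correct and follows essentially the same route as the paper: both directions rest on the identity $pv_{1}(r-x^{1/p})=e_{1}q_{r}$ together with Lemma \ref{q<q}, with the key observation that when $e_{1}=p$ (so $L(1_{x})=L$) the injection $\fm^{q_{r}/p}/\fm^{(q_{r}/p)+1}\hookrightarrow\fm(1_{x})^{e_{1}q_{r}/p}/\fm(1_{x})^{(e_{1}q_{r}/p)+1}$ is an isomorphism, making the containment condition of Lemma \ref{q<q} automatic. Your write-up is merely a more explicit unpacking of the paper's two-case argument, so there is nothing to add.
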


\begin{proof}
If $e_{1}=1$, for any lift $r\in R$ of $x^{1/p}$, $q_{r}=pv_{1}(r-x^{1/p})$ is divisible by $p$.
In particular, in this case, $q(x)$ is divisible by $p$.
If $e_{1}=p$, we have a natural isomorphism $\fm^{pi}/\fm^{pi+1}\simeq \fm(1_{x})^{i}/\fm(1_{x})^{i+1}$ for every $i\in \N$.
In this case, for any lift $r\in R$ of $x^{1/p}$, $q_{r}$ is not divisible by $p$ if and only if $q_{r}=q(x)$ by Lemma \ref{q<q}.
 (Note that such $r$ exists by Theorem \ref{q}.1.)
\end{proof}

\begin{proof}[Proof of Theorems \ref{q}.2 and \ref{q}.3]
Theorem \ref{q}.2 follows from Lemma \ref{r-x}.

Suppose that $e_{1}=p$ and take an element $s\in R$ satisfying the condition on $t$ in Lemma \ref{r-x}.
Then $\varpi_{1}^{p}$ is a uniformizer of $R$.
Note that $R\otimes_{K}K(x^{1/p})=R[x^{1/p}]=R[s-x^{1/p}]$.
Since the characteristic of the field of fractions of $\widehat{R}$ is $p$, there exists a subfield $L'$ of $\widehat{R}$ such that the composite homomorphism $L'\hookrightarrow \widehat{R}\twoheadrightarrow L$ is an isomorphism by the Cohen structure theorem.
Then $\widehat{R}\hookrightarrow (R\otimes_{K}K(x^{1/p}))^{\wedge}\hookrightarrow \widehat{R(1_{x})}$ can be identified with $L'[[\varpi_{1}^{p}]]\hookrightarrow L'[[\varpi_{1}^{p}, s-x^{1/p}]]\hookrightarrow L'[[\varpi_{1}]]$ by {\cite[I \S6 Proposition 18]{Ser}}.
(Here, $(R\otimes_{K}K(x^{1/p}))^{\wedge}$ denotes the completion of $R\otimes_{K}K(x^{1/p})$.)
Then we have
\begin{equation*}
\begin{split}
\delta_{10}&=\length_{R}(R(1_{x})/(R\otimes_{K}K(x^{1/p})))\\
&=\length_{\widehat{R}}(\widehat{R(1_{x})}/(R\otimes_{K}K(x^{1/p}))^{\wedge})\\
&=\dim_{L'}(L'[[\varpi_{1}]]/L'[[\varpi_{1}^{p}, s-x^{1/p}]])\\
&=\frac{(p-1)(q(x)-1)}{2}
\end{split}
\end{equation*}
and
$$\cC_{10}=\fm(1_{x})^{(p-1)(q(x)-1)}$$
by Lemma \ref{r-x} and Lemma \ref{coin}.
\end{proof}

\begin{lem}
Let $M$ be a field, $T$ a variable, and $m, n$ positive integers.
Suppose that $m$ and $n$ are coprime.
Let $M[[T]]$ be the ring of formal power series and $\gamma(T), \delta(T)\in M[[T]]$ units.
Then we have
$$\dim_{M}(M[[T]]/M[[T^{m}\gamma(T), T^{n}\delta(T)]])=\frac{(m-1)(n-1)}{2}$$
and the conductor of $M[[T^{m}\gamma(T), T^{n}\delta(T)]]$ in $M[[T]]$ is $(T^{(m-1)(n-1)})$.
\label{coin}
\end{lem}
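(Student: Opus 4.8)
The plan is to realize the statement as an instance of the Frobenius coin problem attached to the numerical semigroup $W := \langle m,n\rangle = \{mi+nj : i,j\in\N\}$. Work in the complete discrete valuation ring $A := M[[T]]$ with valuation $v$ normalized by $v(T)=1$, and set $B := M[[u,w]]$ with $u := T^m\gamma(T)$, $w := T^n\delta(T)$, so that $v(u)=m$, $v(w)=n$ and $\gamma(0),\delta(0)\neq 0$. Since $\gcd(m,n)=1$, the combinatorics I would invoke is classical: $W$ contains every integer $\geq (m-1)(n-1)$, its largest omitted value (the Frobenius number) is $mn-m-n$, and by Sylvester $\#(\N\setminus W)=(m-1)(n-1)/2$. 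I would reformulate everything through the value semigroup $S := \{v(b) : b\in B\setminus\{0\}\}$, the goal being the identity $S=W$; both assertions of the lemma then drop out.

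For the dimension I would use the $T$-adic filtrations $A_k := T^kA$ and $B_k := B\cap A_k$. Each graded piece $A_k/A_{k+1}$ is one-dimensional over $M$, and the injection $B_k/B_{k+1}\hookrightarrow A_k/A_{k+1}$ shows $\dim_M(B_k/B_{k+1})=1$ if $k\in S$ and $0$ otherwise. Filtering $A/B$ by the images of the $A_k$ and using the modular law gives $\dim_M(A/B)=\#(\N\setminus S)$. The inclusion $W\subseteq S$ is immediate, since $u^iw^j$ has valuation $mi+nj$ with nonzero leading coefficient $\gamma(0)^i\delta(0)^j$.

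The crux, and the step I expect to be the main obstacle, is the reverse inclusion $S\subseteq W$: a priori an element $b=\sum c_{ij}u^iw^j$ could acquire a valuation outside $W$ through cancellation of leading terms. I would control this by a standard-basis style reduction built on the single relation $\rho := \delta(0)^m u^n-\gamma(0)^n w^m$, which one checks satisfies $v(\rho)>mn$ (the degree-$mn$ terms cancel by construction). Fixing a target $N\notin W$, assuming $v(b)=N$, and working modulo $T^{N+1}$, I would truncate $b$ to its finitely many monomials of valuation $\leq N$ (all of which lie in $W$, hence are $<N$), and then repeatedly apply $w^m=\gamma(0)^{-n}(\delta(0)^m u^n-\rho)$ to rewrite each minimal-valuation monomial into the unique representative $u^{i_\ast}w^{j_\ast}$ of that valuation with $j_\ast<m$, at the cost of terms of strictly larger valuation. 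Since $v(b)=N$ strictly exceeds the current minimal valuation $\nu$, the leading coefficients at $\nu$ cancel, so after rewriting the valuation-$\nu$ part vanishes and the minimal valuation strictly increases. As all surviving monomial valuations remain in the finite set $W\cap[0,N-1]$, the process empties the support modulo $T^{N+1}$, forcing $v(b)>N$, a contradiction. The delicate points are the termination bookkeeping and checking that each rewriting preserves $b$ while introducing only higher-valuation monomials.

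Granting $S=W$, the formula $\dim_M(A/B)=\#(\N\setminus W)=(m-1)(n-1)/2$ follows. For the conductor I would first show $T^kA\subseteq B$ for $k\geq c:=(m-1)(n-1)$: for such $k\in W$ the scaled monomial $\beta_k := (\gamma(0)^i\delta(0)^j)^{-1}u^iw^j$ with $mi+nj=k$ equals $T^k+O(T^{k+1})$, and solving $T^k=\sum_{j\geq k}\lambda_j\beta_j$ by the evident triangular recursion produces a $T$-adically convergent sum lying in $B$ by completeness. Hence the conductor $\mathfrak{c}=\{a\in A:aA\subseteq B\}=(T^d)$ has $d\leq c$. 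On the other hand $c-1=mn-m-n\notin W=S$, so $T^{c-1}\notin B$ and $d\geq c$; therefore $\mathfrak{c}=(T^{(m-1)(n-1)})$. (Alternatively, once the dimension is in hand one can deduce $d=2\dim_M(A/B)$ from the Gorenstein symmetry of Remark \ref{conductorvsgenuschange}, using that $B\cong M[[X,Y]]/(F)$ is a hypersurface and hence Gorenstein.)
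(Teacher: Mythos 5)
Your overall route --- the value semigroup $S=\{v(b):b\in B\setminus\{0\}\}$, the graded count $\dim_M(A/B)=\#(\N\setminus S)$, and the identification of $S$ with $W=\langle m,n\rangle$ --- is in substance the same Frobenius-coin-problem argument as the paper's, which phrases it as a direct sum decomposition $M[[T]]=B\oplus\bigoplus_i MT^{\alpha_i}$ over the gaps $\alpha_i$ of $W$; your conductor argument is also fine. The step I would push back on is the rewriting procedure you offer for the crux $S\subseteq W$. As written, its bookkeeping does not close up: each application of $w^m=\gamma(0)^{-n}(\delta(0)^mu^n-\rho)$ leaves behind a term $\gamma(0)^{-n}c_{ij}\,u^iw^{j-m}\rho$, and this is not a higher-valuation monomial in any usable sense. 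If you re-expand $\rho=\delta(0)^mu^n-\gamma(0)^nw^m$ as monomials in $u,w$, you recover monomials of the \emph{same} valuation $mi+nj$, which simply undoes the rewriting; if instead you keep $u^iw^{j-m}\rho$ as an element of $M[[T]]$ of valuation $>\nu$, then after one pass your ``support'' contains elements that are not monomials and whose valuations need not lie in $W$, so the assertion that all surviving valuations remain in $W\cap[0,N-1]$, and with it the induction on the minimal valuation, breaks down.

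Fortunately the difficulty you are arming yourself against never occurs, and this is exactly the observation the paper's proof rests on (``for any natural number $l<nm$, there is at most one pair $(i,j)$ with $ni+mj=l$''). Every gap $N\notin W$ satisfies $N\le mn-m-n<mn$, and every integer $l<mn$ admits at most one representation $l=mi+nj$ with $i,j\in\N$ (two distinct representations would force $i\ge n$ or $j\ge m$). Hence for $b=\sum c_{ij}u^iw^j\in B$: if some monomial in the support has valuation $<mn$, the minimal-valuation monomial is unique, its leading coefficient $c_{ij}\gamma(0)^i\delta(0)^j$ cannot cancel against anything, and $v(b)$ equals that minimal valuation, which lies in $W$; if all monomials have valuation $\ge mn$, then $v(b)\ge mn>N$. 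Either way $v(b)\ne N$, giving $S\subseteq W$ with no rewriting and no use of $\rho$ at all. With that replacement your proof is complete and agrees with the paper's.
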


\begin{proof}
In the case where $\gamma(T)=\delta(T)=1$, this lemma is the so-called ``Frobenius coin problem".
In this case, there exist natural numbers $\alpha_{1},\ldots,\alpha_{\frac{(m-1)(n-1)}{2}}$ such that
$$M[[T]]=M[[T^{m},T^{n}]]\oplus\bigoplus_{1\leq i\leq \frac{(m-1)(n-1)}{2}}MT^{\alpha_{i}},$$
and, for any $\beta\geq (m-1)(n-1)$, there also exists polynomials $F_{\beta}(X,Y)\in M[X,Y]$  such that
$$T^{\beta}=F_{\beta}(T^{m},T^{n}).$$

Next, we prove Lemma \ref{coin} for general $\gamma(T)$ and $\delta(T)$.
We may assume that the constant terms of $\gamma(T)$ and $\delta(T)$ are $1$.
It follows that
$$M[[T]]=M[[T^{m}\gamma(T),T^{n}\delta(T)]]\oplus\bigoplus_{1\leq i\leq \frac{(m-1)(n-1)}{2}}MT^{\alpha_{i}}$$
from induction argument and the fact that, for any natural number $l<nm$, there are at most $1$ pair $(i,j)$ of natural numbers satisfying $ni+mj=l$.
Moreover, for any natural number $\beta\geq(m-1)(n-1)$, we have
$$T^{\beta}= F_{\beta}(T^{m}\gamma(T),T^{n}\delta(T)) \in \fm(1_{x})^{\beta}/\fm(1_{x})^{\beta+1}.$$
Again by induction argument, we can show that the conductor of $M[[T^{m}\gamma(T), T^{n}\delta(T)]]$ in $M[[T]]$ is $(T^{(m-1)(n-1)})$.
\end{proof}

\begin{lem}
Suppose that $f_{1}=p$.
Let $r\in R$ be a lift of $x^{1/p}$.
For such $r$, we write $q'_{r}$ for $q_{r}/p$, $u_{r}$ for the element $(r-x^{1/p})/(\varpi^{q'_{r}})$ in $R(1_x)$, and $\overline{u_{r}}$ for the image of $u_{r}$ in $L(1_{x})$.
The following are equivalent:
\begin{enumerate}
\item
$R[u_{r}]=R(1_{x})$.
\item
$L[\overline{u_{r}}]=L(1_{x})$.
\item
$q_{r}=q(x)$.
\end{enumerate}
\label{choiceofr}
\end{lem}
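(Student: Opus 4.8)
The plan is to establish the two equivalences $(1)\Leftrightarrow(2)$ and $(2)\Leftrightarrow(3)$ separately: the first from Nakayama's lemma, the second from Lemma \ref{q<q} combined with the fact that $[L(1_{x}):L]=p$ is prime. Since $f_{1}=p$ and $e_{1}f_{1}=p$, we have $e_{1}=1$, so $\varpi$ is also a uniformizer of $R(1_{x})$, $\fm(1_{x})=\varpi R(1_{x})$, and $R(1_{x})/\fm(1_{x})=L(1_{x})$. From $pv_{1}(r-x^{1/p})=e_{1}q_{r}=q_{r}$ we get $v_{1}(r-x^{1/p})=q'_{r}$, so $u_{r}=(r-x^{1/p})/\varpi^{q'_{r}}$ is a unit of $R(1_{x})$ reducing to a nonzero $\overline{u_{r}}\in L(1_{x})$.

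For $(1)\Leftrightarrow(2)$, I would use that $R(1_{x})$ is a finite $R$-module (by hypothesis) and apply Nakayama's lemma to the $R$-submodule $R[u_{r}]\subseteq R(1_{x})$. Reducing modulo $\fm$, the canonical isomorphism $R(1_{x})\otimes_{R}L\simeq R(1_{x})/\fm(1_{x})=L(1_{x})$ carries the image of $R[u_{r}]$ onto the $L$-subalgebra generated by $\overline{u_{r}}$, namely $L[\overline{u_{r}}]$. Hence $R[u_{r}]+\fm R(1_{x})=R(1_{x})$ if and only if $L[\overline{u_{r}}]=L(1_{x})$, and by Nakayama the former is equivalent to $R[u_{r}]=R(1_{x})$.

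For $(2)\Leftrightarrow(3)$, the key is to reinterpret the criterion of Lemma \ref{q<q}. Since $e_{1}=1$, we have $p\mid q_{r}$ automatically, and that lemma says $q_{r}<q(x)$ precisely when the class of $r-x^{1/p}$ in $\fm(1_{x})^{q'_{r}}/\fm(1_{x})^{q'_{r}+1}$ lies in the image of $\fm^{q'_{r}}/\fm^{q'_{r}+1}$. Using $\varpi^{q'_{r}}$ as a basis identifies $\fm(1_{x})^{q'_{r}}/\fm(1_{x})^{q'_{r}+1}$ with the $L(1_{x})$-line spanned by the class of $\varpi^{q'_{r}}$, inside which $\fm^{q'_{r}}/\fm^{q'_{r}+1}$ is the $L$-line; as $r-x^{1/p}=u_{r}\varpi^{q'_{r}}$, its class is $\overline{u_{r}}$ times that of $\varpi^{q'_{r}}$, so the inclusion holds if and only if $\overline{u_{r}}\in L$. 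Thus $q_{r}<q(x)\Leftrightarrow\overline{u_{r}}\in L$, i.e. $q_{r}=q(x)\Leftrightarrow\overline{u_{r}}\notin L$. Finally, because $[L(1_{x}):L]=f_{1}=p$ is prime, $L(1_{x})/L$ admits no proper intermediate field, so $L[\overline{u_{r}}]=L(1_{x})$ if and only if $\overline{u_{r}}\notin L$, which is exactly $(3)$.

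The only delicate point I anticipate is the translation in the last paragraph: one must keep careful track of the two nested graded pieces, the $L(1_{x})$-line $\fm(1_{x})^{q'_{r}}/\fm(1_{x})^{q'_{r}+1}$ and the $L$-line $\fm^{q'_{r}}/\fm^{q'_{r}+1}$ inside it, and verify that the class of $r-x^{1/p}$ is $\overline{u_{r}}$ times a generator. Granting this computation, all three equivalences follow immediately.
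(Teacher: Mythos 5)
Your proof is correct and follows essentially the same route as the paper: the equivalence $(1)\Leftrightarrow(2)$ is exactly {\cite[I \S6 Proposition 15]{Ser}} (which the paper cites and you reprove via Nakayama, using $e_{1}=1$ so that $\fm R(1_{x})=\fm(1_{x})$), and the equivalence $(2)\Leftrightarrow(3)$ is obtained, as in the paper, by identifying the class of $r-x^{1/p}=u_{r}\varpi^{q'_{r}}$ in $\fm(1_{x})^{q'_{r}}/\fm(1_{x})^{q'_{r}+1}$ and invoking Lemma \ref{q<q} together with the primality of $[L(1_{x}):L]=p$.
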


\begin{proof}
By {\cite[I \S6 Proposition 15]{Ser}}, $R(1_{x})=R[u_{r}]$ if and only if $L(1_{x})=L(\overline{u_{r}})$.
Since we have $L\varpi^{q'_{r}}=\fm^{q'_{r}}/\fm^{q'_{r}+1}\subset \fm(1_{x})^{q'_{r}}/\fm(1_{x})^{q'_{r}+1}$, $\overline{u_{r}}\in L$ if and only if the image of $r-x^{1/p}$ in $\fm(1_{x})^{q'_{r}}/\fm(1_{x})^{q'_{r}+1}$ is contained in $\fm^{q'_{r}}/\fm^{q'_{r}+1}$, which is equivalent to $q_{r}<q(x)$ by Lemma \ref{q<q}. 
\end{proof}

\begin{proof}[Proof of Theorems \ref{q}.4 and \ref{q}.5]
We keep the notation and the assumption of Lemma \ref{choiceofr}.
Let $s\in R$ be a lift of $x^{1/p}$ satisfying $q(x)=q_{s}$ (cf.\,Theorem \ref{q}.1).
Write $q'(x)$ for $q(x)/p$.
We have $R\otimes_{K}K(x^{1/p})=R[u_{s}\varpi^{q'(x)}]$.
Since $s-x^{1/p}=\varpi^{q'(x)}u_{s}$ and $R\otimes_{K}K(x^{1/p})=R[x^{1/p}]$, we have $R\otimes_{K}K(x^{1/p})=R[u_{s}\varpi^{q'(x)}]$.
Note that we have $R(1_{x})=R[u_{s}]=\sum_{0\leq i\leq p-1}Ru_{s}^{i}$.
Then we have
$$R(1_{x})\varpi^{pq'(x)}=\sum_{0\leq i\leq p-1}Ru_{s}^{i}\varpi^{pq'(x)}=\sum_{0\leq i\leq p-1}R(s-x)^{i}\varpi^{(p-i)q'(x)}\subset R\otimes_{K}K(x^{1/p}).$$
Therefore, we have
\begin{equation*}
\begin{split}
\delta_{10}&=\length_{R}(R(1_{x})/R\otimes_{K}K(x^{1/p}))\\
&=\length_{R}(R(1_{x})/R[u_{s}]\varpi^{pq'(x)})/(R[u_{s}\varpi^{q'(x)}]/R[u_{s}]\varpi^{pq'(x)}).
\end{split}
\end{equation*}
Write $\overline{\fm(1_{x})}$ for the image of $\fm(1_{x})$ in $R(1_{x})/R[u_{s}]\varpi^{pq'(x)}$.
Then, for every natural number $j$, we have
$$\overline{\fm(1_{x})}^{j}/\overline{\fm(1_{x})}^{j+1}=\underset{\substack{1\leq i\leq p-1\\j<pq'(x)i}}{\bigoplus}L u_{s}^{i}\varpi^{j}.$$
Therefore, we have
$$\delta_{10}=\frac{p(p-1)q'(x)}{2}=\frac{(p-1)q(x)}{2},\quad
\cC_{10}=\fm(1_{x})^{pq'(x)-q'(x)}=\fm(1_{x})^{\frac{(p-1)q(x)}{p}}.$$

Theorem \ref{q}.5 follows from Theorems \ref{q}.1, \ref{q}.3, and \ref{q}.4.
\end{proof}

Suppose that $x^{1/p^{2}}\notin L(1_{x})$.
By replacing $R$ and $K$ with $R(1_{x})$ and $K(x^{1/p})$, respectively, we define $R(2_{x}), e_{2}, f_{2}, g_{21}, \delta_{21}$, and $q(x^{1/p})$ in a similar way.
Note that $R(2_{x})$ is finite over $R(1_{x})$ since $R$ is $\{x\}$-Japanese.

\begin{lem}
Let $r_{1}\in R(1_{x})$ be a lift of $x^{1/p^{2}}\in L(1_{x})$ satisfying $v_{1}(r_{1}^{p}-x^{1/p})=q(x^{1/p})$.
Note that $r_{1}^{p}\in R$ is a lift of $x^{1/p}$.
\begin{enumerate}
\item
Suppose that $e_{1}=e_{2}=p$.
We have $q(x^{1/p})=q_{r_{1}^{p}}=q(x)$ and $\delta_{21}=\delta_{10}$.
\item
Suppose that $e_{1}=p$ and $e_{2}=1$.
We have $q(x^{1/p})=q_{r_{1}^{p}}< q(x)$ and $\delta_{21}\leq \delta_{10}$. 
\item
Suppose that $e_{1}=e_{2}=1$.
We have $pq(x^{1/p})=q_{r_{1}^{p}}\leq q(x)$ and $p\delta_{21}\leq \delta_{10}$. 
Moreover, the equality $pq(x^{1/p})=q(x)$ holds if and only if the equality $p\delta_{21}=\delta_{10}$ holds.
Furthermore, $L(2_{x})$ is a simple extension field over $L$ if and only if $pq(x^{1/p})=q(x)$.
\item
Suppose that $e_{1}=1$ and $e_{2}=p$.
We have $pq(x^{1/p})=q_{r_{1}^{p}}\leq q(x)$ and $p\delta_{21}+\frac{p(p-1)}{2}\leq \delta_{10}$. 
Moreover, the equality $p q(x^{1/p})= q(x)$ holds if and only if the equality $p\delta_{21}+\frac{p(p-1)}{2} = \delta_{10}$ holds.
\end{enumerate}
\label{qlem}
\end{lem}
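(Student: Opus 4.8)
The plan is to reduce all four cases to a single valuation identity between the two steps and then read off the $\delta$-relations from Theorem \ref{q}. First I would establish the identity
\[
p\,q(x^{1/p}) = e_{1}\,q_{r_{1}^{p}}.
\]
Since $r_{1}^{p}\in R$ is a lift of $x^{1/p}$ (as noted in the statement), and since in characteristic $p$ we have $(r_{1}^{p}-x^{1/p})^{p}=r_{1}^{p^{2}}-x\in R$, applying $v_{1}$ to this equation and using $v_{1}(r_{1}^{p}-x^{1/p})=q(x^{1/p})$ together with $v_{1}|_{\Frac(R)}=e_{1}v$ and $v(r_{1}^{p^{2}}-x)=q_{r_{1}^{p}}$ yields the identity. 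The trivial bound $q_{r_{1}^{p}}\leq q(x)$ (because $r_{1}^{p}$ is a lift of $x^{1/p}$ in $R$) then gives, for $e_{1}=p$, that $q(x^{1/p})=q_{r_{1}^{p}}$, and for $e_{1}=1$, that $p\,q(x^{1/p})=q_{r_{1}^{p}}\leq q(x)$, which is the $q$-part of cases 3 and 4.

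To pin down the remaining $q$-comparisons I would use the divisibility dichotomy of Theorem \ref{q}.2, valid in each step ($e_{i}=p$ iff the corresponding $q$ is prime to $p$), together with Lemma \ref{r-x} (a lift whose $q$-value is prime to $p$ computes $q(x)$). In case 1 ($e_{1}=e_{2}=p$), $q(x^{1/p})=q_{r_{1}^{p}}$ is prime to $p$, so $q_{r_{1}^{p}}=q(x)$; in case 2 ($e_{1}=p$, $e_{2}=1$), $q(x^{1/p})=q_{r_{1}^{p}}$ is divisible by $p$ while $q(x)$ is not, forcing $q_{r_{1}^{p}}<q(x)$. The $\delta$-statements are then bookkeeping: for each of $\delta_{10}$ and $\delta_{21}$ I substitute the formula of Theorem \ref{q}.3 (when the relevant ramification index is $p$) or Theorem \ref{q}.4 (when it is $1$) and simplify. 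For example, in case 4 one gets
\[
p\,\delta_{21}+\frac{p(p-1)}{2}=\frac{(p-1)\,q_{r_{1}^{p}}}{2}\leq\frac{(p-1)q(x)}{2}=\delta_{10},
\]
with equality exactly when $q_{r_{1}^{p}}=q(x)$, i.e.\ $p\,q(x^{1/p})=q(x)$; cases 1, 2, and 3 are analogous.

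The main obstacle is the final assertion of case 3, the simplicity of $L(2_{x})$ over $L$, which requires Lemma \ref{choiceofr} in both steps. Since $e_{1}=e_{2}=1$, $\varpi$ is a common uniformizer, and writing $u_{r_{1}}=(r_{1}-x^{1/p^{2}})/\varpi^{q(x^{1/p})/p}$ and $u_{r_{1}^{p}}=(r_{1}^{p}-x^{1/p})/\varpi^{q_{r_{1}^{p}}/p}$, the identity $q_{r_{1}^{p}}=p\,q(x^{1/p})$ gives $u_{r_{1}}^{p}=u_{r_{1}^{p}}$, hence $\overline{u_{r_{1}}}^{\,p}=\overline{u_{r_{1}^{p}}}$ in the residue field. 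Because $r_{1}$ computes $q(x^{1/p})$, Lemma \ref{choiceofr} gives $L(2_{x})=L(1_{x})(\overline{u_{r_{1}}})$, so $L(2_{x})^{p}=L(1_{x})^{p}(\overline{u_{r_{1}^{p}}})\subseteq L(\overline{u_{r_{1}^{p}}})$ using $L(1_{x})^{p}\subseteq L$. By Lemma \ref{choiceofr} in the first step, $q_{r_{1}^{p}}=q(x)$ iff $L(1_{x})=L(\overline{u_{r_{1}^{p}}})$, i.e.\ iff $\overline{u_{r_{1}^{p}}}\notin L$. Thus if $p\,q(x^{1/p})=q(x)$ then $L(2_{x})^{p}\not\subseteq L$, so the purely inseparable degree-$p^{2}$ extension $L(2_{x})/L$ has exponent $2$ and is simple, whereas if $p\,q(x^{1/p})<q(x)$ then $L(2_{x})^{p}\subseteq L$, the extension has exponent $1$, and is not simple. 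I expect the delicate points to be the clean identification $u_{r_{1}}^{p}=u_{r_{1}^{p}}$ and the elementary but easily-misquoted fact that a degree-$p^{2}$ purely inseparable extension is simple precisely when its exponent is $2$.
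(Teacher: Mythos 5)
Your proposal is correct and follows essentially the same route as the paper's proof: the valuation identity $p\,q(x^{1/p})=e_{1}q_{r_{1}^{p}}$ (which the paper carries out separately in the cases $e_{1}=p$ and $e_{1}=1$) combined with the divisibility dichotomy of Theorem \ref{q}.2 and Lemma \ref{r-x} for the $q$-comparisons, substitution of the formulas of Theorems \ref{q}.3 and \ref{q}.4 for the $\delta$-relations, and Lemma \ref{choiceofr} applied in both steps via $u_{r_{1}}^{p}=u_{r_{1}^{p}}$ for the simplicity of $L(2_{x})/L$. The only differences are cosmetic: you unify the two ramification cases into one identity and spell out the degree-$p^{2}$/exponent-$2$ argument that the paper leaves implicit.
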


Before we prove Lemma \ref{qlem}, we state one of the main theorems of this paper (cf.\,Theorem \ref{intromain2}), which follows immediately from this lemma.

\begin{thm}
We keep the notation of Lemma \ref{qlem}.
In each assertion $1\leq i \leq 4$ of this theorem, we assume the same assumption as that in assertion $i$ of Lemma \ref{qlem}. 
Suppose that $L$ is finite over $K$.
The we have the following:
\begin{enumerate}
\item
$pg_{21}=g_{10}$.
\item
$pg_{21}\leq g_{10}$.
\item
$pg_{21}\leq g_{10}$.
Moreover, the equality holds if $L(2_{x})$ is a simple field extension over $L$.
\item
$pg_{21}+\frac{p(p-1)}{2}\leq g_{10}$.
\end{enumerate}
\label{mainineq}
\end{thm}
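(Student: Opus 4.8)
The plan is to derive Theorem \ref{mainineq} from Lemma \ref{qlem} by converting each assertion about $\delta$-invariants into the corresponding assertion about $g_{10}$ and $g_{21}$ through the residue-degree formula of Lemma \ref{calcul}.3. Since $L$ is finite over $K$ and $x^{1/p}\in L$, that lemma gives
\[
g_{10}=\delta_{10}[L:K(x^{1/p})].
\]
Applying the same lemma in the shifted situation obtained by replacing $(R,K)$ with $(R(1_{x}),K(x^{1/p}))$ (so that $x^{1/p}$ plays the role of $x$, the element $x^{1/p^{2}}\in L(1_{x})$ plays the role of $x^{1/p}$, and $L(1_{x})$ plays the role of $L$) yields
\[
g_{21}=\delta_{21}[L(1_{x}):K(x^{1/p^{2}})].
\]
Here $L(1_{x})$ is finite over $K(x^{1/p})$ because it is finite over $L$, and the hypothesis $x^{1/p^{2}}\in L(1_{x})$ is exactly what makes Lemma \ref{calcul}.3 applicable to $g_{21}$.

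Next I would carry out the field-theoretic degree computation relating the two residue-degree factors. From the towers $K(x^{1/p})\subset L\subset L(1_{x})$ and $K(x^{1/p})\subset K(x^{1/p^{2}})\subset L(1_{x})$, together with $[L(1_{x}):L]=f_{1}$, one obtains
\[
[L(1_{x}):K(x^{1/p^{2}})]=\frac{f_{1}}{p}[L:K(x^{1/p})],
\]
provided $[K(x^{1/p^{2}}):K(x^{1/p})]=p$. This last equality must be checked: if $x^{1/p^{2}}\in K(x^{1/p})$, then $x^{1/p}=(x^{1/p^{2}})^{p}\in K(x^{1/p})^{p}=K^{p}(x)\subset K$, contradicting $x\notin K^{p}$. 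Combining the two displays produces the clean identities
\[
pg_{21}=f_{1}\,\delta_{21}\,[L:K(x^{1/p})],\qquad g_{10}=\delta_{10}\,[L:K(x^{1/p})],
\]
which reduce every comparison between $pg_{21}$ and $g_{10}$ to a comparison between $f_{1}\delta_{21}$ and $\delta_{10}$.

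Finally I would run through the four cases, reading off $f_{1}$ from $e_{1}f_{1}=p$ and inserting Lemma \ref{qlem}. In cases $1$ and $2$ we have $f_{1}=1$, so $f_{1}\delta_{21}=\delta_{21}$, and Lemma \ref{qlem}.1 ($\delta_{21}=\delta_{10}$) and Lemma \ref{qlem}.2 ($\delta_{21}\leq\delta_{10}$) give $pg_{21}=g_{10}$ and $pg_{21}\leq g_{10}$. In cases $3$ and $4$ we have $f_{1}=p$, so $f_{1}\delta_{21}=p\delta_{21}$; Lemma \ref{qlem}.3 ($p\delta_{21}\leq\delta_{10}$, an equality exactly when $L(2_{x})/L$ is simple) gives $pg_{21}\leq g_{10}$ together with the stated equality criterion, while Lemma \ref{qlem}.4 ($p\delta_{21}+\frac{p(p-1)}{2}\leq\delta_{10}$), after multiplication by $[L:K(x^{1/p})]\geq1$, gives
\[
pg_{21}+\frac{p(p-1)}{2}\leq pg_{21}+\frac{p(p-1)}{2}[L:K(x^{1/p})]\leq g_{10}.
\]
I expect no real obstacle: once Lemma \ref{qlem} and Lemma \ref{calcul}.3 are available the argument is pure bookkeeping, and the only points needing genuine care are the verification that $[K(x^{1/p^{2}}):K(x^{1/p})]=p$ and the correct matching of the shifted invariants.
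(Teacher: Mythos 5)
Your proof is correct and is essentially the argument the paper intends: the paper gives no separate proof, stating only that Theorem \ref{mainineq} ``follows immediately'' from Lemma \ref{qlem}, and your conversion of $\delta$-invariants to $g$-invariants via Lemma \ref{calcul}.3, together with the check that $[K(x^{1/p^{2}}):K(x^{1/p})]=p$ and the identity $pg_{21}=f_{1}\delta_{21}[L:K(x^{1/p})]$, is exactly the bookkeeping being left to the reader. All four cases are handled correctly.
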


\begin{proof}[Proof of Lemma \ref{qlem}]
First, we have an inequality $q_{r_{1}^{p}}\leq q(x)$ by the definition of $q(x)$.

We show assertions 1 and 2.
Suppose that $e_{1}=p$.
Then we have
$$q(x^{1/p})=v_{1}(r_{1}^{p}-x^{1/p})=v((r_{1}^{p})^{p}-x)=q_{r_{1}^{p}}.$$
If $e_{2}=p$, since $q_{r_{1}^{p}}=q(x^{1/p})$ is not divisible by $p$ by Lemma \ref{r-x}, $q_{r_{1}^{p}}=q(x)$ again by Lemma \ref{r-x}.
Hence, assertion 1 holds.
If $e_{2}=1$, since $q(x)$ is divisible by $p$ and $q(x^{1/p})$ is not divisible by $p$, we have $q(x^{1/p})<q(x)$.

Next, we show assertions 3 and 4.
Suppose that $e_{1}=1$.
Then we have
$$pq(x^{1/p})=pv_{1}(r_{1}^{p}-x^{1/p})=v((r_{1}^{p})^{p}-x)=q_{r_{1}^{p}}.$$
If $e_{2}=p$, assertion 4 follows from Theorems \ref{q}.2, \ref{q}.3, and \ref{q}.4.
Suppose that $e_{2}=1$.
Write $u'$ (resp.\,$\overline{u'}$) for the element 
\[
\frac{r_{1}^{p}-x^{1/p}}{\varpi^{q'(x^{1/p})}}\in R(2_{x})
\] 
(resp.\,the image of $u'$ in $L(2_{x})$).
Since $e_{1}=1$, $\varpi$ is a uniformizer of $R(1_{x})$ and $L(2_{x})=L(1_{x})[\overline{u'}]$ by Lemma \ref{choiceofr}.
Moreover, we have
$$u'^{p}=(\frac{r_{1}^{p}-x^{1/p}}{\varpi^{q'(x^{1/p})}})^{p}=\frac{(r_{1}^{p})^{p}-x}{\varpi^{pq'(x^{1/p})}}.$$
Then, by Lemma \ref{choiceofr}, the following are equivalent:
\begin{itemize}
\item
$L(2_{x})$ is a simple field extension over $L$.
\item
$L(1_{x})=L[\overline{u'}^{p}]$.
\item
$q_{r_{1}^{p}}=q(x)$.
\end{itemize}
Therefore, assertion 3 follows from Theorems \ref{q}.2 and \ref{q}.4.
\end{proof}

\begin{cor}
Suppose that $e_{1}=e_{2}=p$ and $R(1_{x})$ is $\{x^{1/p}\}$-normal.
Then $R$ is $\{x\}$-normal.
\end{cor}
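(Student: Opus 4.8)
The plan is to reduce the statement to the numerical criterion for $\{x\}$-normality supplied by Theorem \ref{q}.3, and then to transport the relevant invariant across the extension $R\subset R(1_{x})$ by means of Lemma \ref{qlem}.1. Concretely, since $e_{1}=p$, Theorem \ref{q}.3 already tells us that $R$ is $\{x\}$-normal if and only if $q(x)=1$. Thus the entire corollary is equivalent to the single numerical assertion $q(x)=1$, and I would spend the proof establishing this equality.

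To produce it, I would invoke the same criterion in the shifted situation obtained by the substitution $R\rightsquigarrow R(1_{x})$, $K\rightsquigarrow K(x^{1/p})$, $x\rightsquigarrow x^{1/p}$, in which the role of $e_{1}$ is played by $e_{2}=p$. There Theorem \ref{q}.3 reads: $R(1_{x})$ is $\{x^{1/p}\}$-normal if and only if $q(x^{1/p})=1$. By hypothesis $R(1_{x})$ is $\{x^{1/p}\}$-normal, so this gives $q(x^{1/p})=1$. The two extensions then get linked by Lemma \ref{qlem}.1: under the assumption $e_{1}=e_{2}=p$ one has exactly $q(x)=q(x^{1/p})$. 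Combining, $q(x)=q(x^{1/p})=1$, and the reduction in the first paragraph finishes the proof that $R$ is $\{x\}$-normal.

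The step I expect to require the most care is the verification that Theorem \ref{q}.3 genuinely applies to the shifted datum $(R(1_{x}),K(x^{1/p}),x^{1/p})$, i.e.\ that all standing hypotheses of Section \ref{simplecase} persist. I would check that $x^{1/p}\in K(x^{1/p})\setminus (K(x^{1/p}))^{p}$, which holds because $(K(x^{1/p}))^{p}=K^{p}(x)\subset K$ while $x^{1/p}\notin K$; that $R(1_{x})$ is again a $\{x^{1/p}\}$-integral, $\{x^{1/p}\}$-Japanese discrete valuation ring, with $R(2_{x})$ finite over it (so that $\delta_{21}$ and $q(x^{1/p})$ are defined); and that $x^{1/p^{2}}$ lies in the residue field $L(1_{x})$, which follows since $e_{2}=p$ forces $f_{2}=1$, hence $L(2_{x})=L(1_{x})$ and $x^{1/p^{2}}\in L(2_{x})=L(1_{x})$. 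This last point is what places us in the ``$e=p$'' branch of Theorem \ref{q} rather than a degenerate one, and it also matches, via Proposition \ref{B-prop}.2, the hypothesis ``$R(1_{x})$ is $\{x^{1/p}\}$-normal'' with the condition $q(x^{1/p})=1$. Once this bookkeeping is in place, the substantive content is carried entirely by the invariance $q(x)=q(x^{1/p})$ of Lemma \ref{qlem}.1.
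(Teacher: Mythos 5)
Your proof is correct and is essentially the paper's own argument: the paper proves this corollary by exactly the combination of Theorem \ref{q}.3 (applied to both $R$ and the shifted datum $R(1_{x})$, giving the criterion $q=1$ for normality when $e=p$) and Lemma \ref{qlem}.1 (giving $q(x)=q(x^{1/p})$ when $e_{1}=e_{2}=p$). Your additional bookkeeping verifying that the standing hypotheses persist for $(R(1_{x}),K(x^{1/p}),x^{1/p})$ is a sound elaboration of what the paper leaves implicit.
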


\begin{proof}
This follows from Theorem \ref{q}.3 and Lemma \ref{qlem}.1.
\end{proof}

By using the above results, we can treat the case of general extensions as follows.

\begin{not-set}
Let $K, R, \fm, \varpi, v$, and $L$ be as above.
We suppose that $L$ is a finite extension of $K$.
Let $B$ be a subset of a $p$-basis over $K$.
Suppose that $R$ is $B$-integral and $B$-Japanese.
We write $R(1_{B})$ for the normalization of $R\otimes_{K}K(B^{1/p})$ in its field of fractions as before.
We put $g_{10}':= \dim_{K(B^{1/p})}(R(1_{B})/R\otimes_{K}K(B^{1/p}))$.
By replacing $K, R,$ and $B$ with $K(B^{1/p}), R(1_{B}),$ and $B^{1/p}$, respectively, we define $R(2_{B})$ and $g'_{21}$ in a similar way.
We note that $g_{10}',g'_{21} < \infty$ since $R$ is $B$-Japanese and $L$ is finite over $K$.
\label{notageneral}
\end{not-set}

\begin{prop}
In the setting of Notation-Setting \ref{notageneral},
we have $p g_{21}' \leq g_{10}'$.
\label{Bgeqp}
\end{prop}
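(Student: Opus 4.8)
The plan is to mimic the proof of Corollary \ref{geqp}, replacing the full $p$-basis used there by the subset $B$, and to pay close attention to the field over which the relevant dimensions are taken. First I would reduce to the case where $B$ is finite: the hypotheses of being $B$-integral and $B$-Japanese pass to finite subsets by Proposition \ref{B-prop}.3 and \ref{B-prop}.5, and since $g_{10}'$ and $g_{21}'$ are finite, the normalizations $R(1_B)$ and $R(2_B)$ are already reached over $K(B_0^{1/p})$ (resp.\,$K(B_0^{1/p^2})$) for some finite $B_0\subset B$, with the dimensions unchanged under the faithfully flat base change to $K(B^{1/p})$. So I may assume $B=\{x_1,\dots,x_d\}$.

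Next, running the argument in the proof of Proposition \ref{B-prop}.6, the multiplication map realizes $R(2_B)^p\otimes_{R(1_B)^p}R$ as a subring of $R(1_B)$, and this subring contains $(R(1_B)^p\otimes_K K(B^{1/p}))\otimes_{R(1_B)^p}R=R\otimes_K K(B^{1/p})$ because $R(1_B)^p\otimes_K K(B^{1/p})\subseteq R(2_B)^p$. Hence, writing $N:=R(2_B)^p/(R(1_B)^p\otimes_K K(B^{1/p}))$, I get
$$\dim_{K(B^{1/p})}(N\otimes_{R(1_B)^p}R)=\dim_{K(B^{1/p})}\frac{R(2_B)^p\otimes_{R(1_B)^p}R}{R\otimes_K K(B^{1/p})}\leq \dim_{K(B^{1/p})}\frac{R(1_B)}{R\otimes_K K(B^{1/p})}=g_{10}'.$$
It therefore suffices to show the reverse-type estimate $\dim_{K(B^{1/p})}(N\otimes_{R(1_B)^p}R)\geq p\,g_{21}'$.

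The crux is to compute this dimension. Frobenius induces a semilinear bijection from $R(2_B)/(R(1_B)\otimes_{K(B^{1/p})}K(B^{1/p^2}))$ onto $N$ which carries $K(B^{1/p^2})$ to $K^p(B^{1/p})$; thus $\dim_{K^p(B^{1/p})}N=g_{21}'$. Since $N$ is in fact a $K(B^{1/p})$-vector space, this gives $[K(B^{1/p}):K^p(B^{1/p})]\cdot\dim_{K(B^{1/p})}N=g_{21}'$, and a degree count yields $[K(B^{1/p}):K^p(B^{1/p})]=[K:K^p(B)]$. If this index is infinite, then $g_{21}'=0$ and the inequality is trivial; otherwise $[K:K^p]<\infty$, so $R$ is a finite torsion-free, hence free, module over the discrete valuation ring $R(1_B)^p$, and another degree count gives $[\Frac(R):\Frac(R(1_B)^p)]=p\,[K:K^p(B)]$. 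Base-changing, $R\otimes_K K(B^{1/p})$ is free of this rank over $R(1_B)^p\otimes_K K(B^{1/p})$, so
$$\dim_{K(B^{1/p})}(N\otimes_{R(1_B)^p}R)=p\,[K:K^p(B)]\cdot\dim_{K(B^{1/p})}N=p\,g_{21}',$$
and combined with the previous display this yields $p\,g_{21}'\leq g_{10}'$.

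The main obstacle I expect lies in the last paragraph, namely the bookkeeping of coefficient fields that is invisible in Corollary \ref{geqp}. There $K^p(B)=K$ makes the correction trivial, whereas for a proper subset $B$ the Frobenius twist forces $\dim_{K(B^{1/p})}N$ to differ from $g_{21}'$ by the factor $[K:K^p(B)]$, and this must cancel \emph{exactly} against the rank $p\,[K:K^p(B)]$ of $R$ over $R(1_B)^p$. Justifying the freeness of $R$ over $R(1_B)^p$ and this rank formula — equivalently, controlling the ramification index and residue degree of the purely inseparable local extension $R(1_B)^p\subseteq R$ when the residue fields are imperfect, as in the lemma following Corollary \ref{geqp} — is the technical heart. (One could instead try to decompose $K(B^{1/p})/K$ into simple extensions and sum the bounds of Theorem \ref{mainineq}, but the interleaving of the two levels of base change makes the direct generalization of Corollary \ref{geqp} above cleaner.)
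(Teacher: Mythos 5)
Your reduction to finite $B$ and the identification of $R(2_B)^p\otimes_{R(1_B)^p}R$ as a subring of $R(1_B)$ containing $R\otimes_K K(B^{1/p})$ both work (the latter is exactly the content of the proof of Proposition \ref{B-prop}.6), but the step you yourself flag as the technical heart is where the argument breaks. The module $N=R(2_B)^p/(R(1_B)^p\otimes_{K^p(B)}K^p(B^{1/p}))$ is \emph{not} a $K(B^{1/p})$-vector space in general: $R(2_B)^p$ is an algebra over $K^p(B^{1/p})$, but for $K(B^{1/p})=K\cdot K^p(B^{1/p})$ to act you would need $K\subseteq R(2_B)^p$, i.e.\ $K^{1/p}\subseteq \Frac(R)(B^{1/p^2})$, which fails as soon as $K$ contains elements $p$-independent from $K^p(B)$ whose $p$-th roots are not created by $R$. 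Your own argument exposes the problem: the identity $[K(B^{1/p}):K^p(B^{1/p})]\cdot\dim_{K(B^{1/p})}N=g_{21}'$ would force $g_{21}'=0$ whenever $[K:K^p(B)]=\infty$, but taking $K=k(t,s_1,s_2,\dots)$, $B=\{t\}$ and $R$ as in Example (1) of Section \ref{eg} gives $[K:K^p(B)]=\infty$ with $g_{21}'=\frac{(p-1)(n-1)}{2}>0$. Consequently the exact cancellation of the two factors of $[K:K^p(B)]$ on which your computation of $\dim_{K(B^{1/p})}(N\otimes_{R(1_B)^p}R)$ rests is illusory. A secondary issue is that the freeness of $R$ over $R(1_B)^p$ of rank $p[K:K^p(B)]$ is not available in the generality of Notation-Setting \ref{notageneral}, which assumes neither that $R$ is essentially of finite type nor that $[K:K^p]<\infty$; the correct bookkeeping has to be done with $\length_{R(1_B)^p}$ and the ramification index and residue degree of $R(1_B)^p\subseteq R$, and the resulting inequality is essentially equivalent to the proposition itself.

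The paper takes precisely the route you set aside in your final parenthesis: it decomposes $K(B^{1/p})/K$ into the simple steps $K\subset K(x_1^{1/p})\subset K(B^{1/p})\subset K(B^{1/p})(x_1^{1/p^2})$, compares the invariants $q_{r}$ and $q_{r'}$ of Notation \ref{notationq} across the two levels via $q_{r'}\leq p^{i-1}q_r$ (where $p^i$ is the ramification contribution of $K(B^{1/p})/K$), and then feeds these into the exact formulas of Theorem \ref{q} for $g_{10}^{x_1}$ and $g_{21}^{x_1}$, with a case division according to whether $p$ divides $q_r$ and $q_{r'}$; iterating over the elements of $B$ gives the claim. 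That approach avoids the coefficient-field problem entirely because each step is a simple extension where the $\delta$-invariant is computed exactly. To salvage your strategy you would need to replace the vector-space claim about $N$ by a length computation over $R(1_B)^p$ together with control of $e$ and $f$ for $R(1_B)^p\subseteq R$, at which point you are reproving the inequality case by case anyway.
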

\begin{proof}
For each $i \in \{ 1,2\}$, there exist elements $r_{i1}, \ldots, r_{is_{i}} \in R(i_{B})$ such that 
\[
R(i_{B})= \sum_{j}(R\otimes_{K}K(B^{1/p^{i}}))r_{ij}.
\]
Then we can take a finite subset $B' \subset B$ such that $r_{ij} \in \Frac (R\otimes_{K}K(B'^{1/p^{i}}))$ for any $i,j$.
Then the problem can be reduced to the case for $B'$, so we may suppose that $B$ is a finite set.
We write $B = \{x_{1}, \ldots, x_{m}\}.$
We consider extensions $K \subset K(x_{1}^{1/p}) \subset K(B^{1/p}) \subset K(B^{1/p})(x_{1}^{1/p^{2}})$.
We put 
\[
g_{10}^{x_{1}} := \dim_{K(x_{1}^{1/p})} (R(1_{x_{1}})/ R \otimes_{K} K (x_{1}^{1/p}))
\]
and
\[
g_{21}^{x_{1}} := \dim_{K(B^{1/p})(x_{1}^{1/p^{2}})} (R(1_{B})(1_{x_{1}^{1/p}})/ R(1_{B})\otimes_{K(B^{1/p})} K (B^{1/p})(x_{1}^{1/p^{2}}) ).
\]
It suffices to show that $p g_{21}^{x_{1}} \leq g_{10}^{x_{1}}$.
Indeed, by using the same inequality with respect to $x_{2}$ for the normalization of $R\otimes_{K} K(x_{1}^{1/p})$, and iterating these calculations, we obtain the desired inequality.

Let $v'$ be the normalized valuation of $R(1_{B})$ and $L'$ the residue field of $R(1_{B})$.
We put $v'|_{R}= p^{i}v$ and $[L':L] = p^{j}$.
We note that $i$ is $1$ or $0$, and $i+j=m$.
We may assume that $x_{1}^{1/p^{2}} \in L'$.
Otherwise, we have $g_{21}^{x_{1}} = 0$ by \cite[I \S6 Proposition 15]{Ser}.
Therefore, we can take $r\in R$ such that $q(x_{1}) = q_{r}$ as in Notation \ref{notationq}.
Moreover, we can take $r' \in R(1_{B})$ such that $q(x_{1}^{1/p}) = q_{r'}$.
We have
\begin{align*}
q_{r'} &= v' (r'^{p}- x_{1}^{1/p})\\
   &= p^{i-1} v (r'^{p^2} - x_{1})\\
   &\leq p^{i-1} v(r^{p}- x_{1}) = p^{i-1} q_{r}.
\end{align*}
First, we consider the case where $p \nmid q_{r'}$.
In this case, we have
\begin{align*}
g_{21}^{x_{1}} &=
\frac{p-1}{2} (q_{r'}-1) [L': K(B^{1/p})(x_{1}^{1/p^2})] \\
&\leq \frac{p-1}{2} (p^{i-1}q_{r}-1) [L:K(x_{1}^{1/p})] p^{j-m} \\
&\leq \frac{p-1}{2} p^{i-1} (q_{r}-1) [L:K(x_{1}^{1/p})] p^{j-m}  \leq \frac{1}{p} g_{10}^{x_{1}}.
\end{align*}
Next, we consider the case where $p \mid q_{r'}$.
If $p \mid q_{r}$, then we have
\begin{align*}
g_{21}^{x_{1}}
&= \frac{p-1}{2}q_{r'}[L':K(B^{1/p})(x_{1}^{1/p^{2}})]\\
& \leq \frac{p-1}{2} p^{i-1} q_{r} [L:K(x_{1}^{1/p})]p^{j-m}
=\frac{1}{p} g_{10}^{x_{1}}.
\end{align*}
On the other hand, if $p \nmid q_{r}$, then we have $i=1$, and the inequality
\begin{align*}
g_{21}^{x_{1}} &= \frac{p-1}{2}q_{r'}[L':K(B^{1/p})(x_{1}^{1/p^{2}})]\\
& \leq \frac{p-1}{2} (q_{r}-1) [L:K(x_{1}^{1/p})]\frac{p^{m-1}}{p^{m}}
=\frac{1}{p} g_{10}^{x_{1}}.
\end{align*}
follows from $q_{r'} \leq q_{r}-1$.
Therefore, we have $p g_{21}^{x_{1}} \leq g_{10}^{x_{1}}$ in any case, and we finish the proof of Proposition \ref{Bgeqp}.
\end{proof}

\begin{not-set}
Let $\overline{K}, C$, and $G$ be as in Section \ref{main1section}.
Let $c$ be a closed point of $C$.
Write $C_{K(x^{1/p})}$ (resp.\,$C(1_{x})$) for the scheme $C\times_{\Spec K}\Spec K(x^{1/p})$ (resp.\,the normalization of $C_{K(x^{1/p})}$ in its function field).
We use the notation of this section assuming $R=O_{C,c}$, and we suppose $x^{1/p}\in L$.
Let $G_{c}$ be the algebraic group over $K(x^{1/p})$ which represents the functor defined by $R(1_{x})^{\ast}/(R\otimes_{K}K(x^{1/p}))^{\ast}$ (cf.\,the proofs of Lemmas \ref{connsmunip}.3 and \ref{connsmunip}.4).
By the proofs of Lemma \ref{connsmunip}.2 and Proposition \ref{connunip}, $G_{c}$ is a connected unipotent algebraic group.
Moreover, $G_{c}$ is $p$-torsion by Theorem \ref{overclosed}.
\label{forKnot}
\end{not-set}

\begin{prop}
We work under the setting of Notation-Setting \ref{forKnot}.
Let $G_{c,v}$ (resp.\,$G_{c,s}$) be the largest vector subgroup (resp.\,the largest $K(x^{1/p})$-split algebraic subgroup) of $G_{c}$ (cf.\,the proof of Lemma \ref{maximalvec}).
(Note that $G_{c,v}$ coincides with the largest $p$-torsion $K(x^{1/p})$-split unipotent subgroup of $G_{c}$ by Lemma \ref{splitvector}.)
Then we have $G_{c,v}=G_{c,s}$.
Moreover, the following are equivalent:
\begin{enumerate}
\item
$G_{c,v}(=G_{c,s})=G_{c}$.
\item
$e_{1}=p$.
\end{enumerate}
\label{pointsubgroup}
\end{prop}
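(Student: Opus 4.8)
The plan is to detect everything from the multiplicative ``top layer'' of $G_c$, which is controlled by the residue extension $L(1_x)/L$ and hence by whether $e_1=p$. First I would dispose of the equality $G_{c,v}=G_{c,s}$: any $K(x^{1/p})$-split subgroup of $G_c$ is, being a subgroup of the $p$-torsion smooth connected commutative group $G_c$, itself $p$-torsion, hence a vector group by Lemma \ref{splitvector}; since conversely every vector group is split, the largest split subgroup $G_{c,s}$ coincides with the largest vector subgroup $G_{c,v}$. Next, writing $A:=R\otimes_K K(x^{1/p})$, recall $G_c$ is the fpqc sheaf attached to $A\mapsto R(1_x)^{\ast}/A^{\ast}$. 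Reduction modulo the maximal ideals $\fm(1_x)$ and $\fm_A$ produces a surjection of $G_c$ onto the group $W$ attached to $B\mapsto (L(1_x)\otimes_{K(x^{1/p})}B)^{\ast}/(L\otimes_{K(x^{1/p})}B)^{\ast}$, whose kernel is the image of $1+\fm(1_x)$. By transitivity of Weil restriction (as in the discussion before Lemma \ref{forsplit}) one identifies $W=\Coker\bigl(\eta\colon(\G_m)_{L/K(x^{1/p})}\to(\G_m)_{L(1_x)/K(x^{1/p})}\bigr)\simeq(\Coker\eta_{L(1_x)/L})_{L/K(x^{1/p})}$.

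For the implication $(2)\Rightarrow(1)$: if $e_1=p$ then $f_1=1$, so $L(1_x)=L$ and $W=0$. Thus $G_c$ is the image of $1+\fm(1_x)$, which carries the $\fm(1_x)$-adic filtration whose graded pieces are subquotients of the $L$-lines $\fm(1_x)^{i}/\fm(1_x)^{i+1}$; as $K(x^{1/p})$-groups these are vector groups $\G_a^{[L:K(x^{1/p})]}$. Refining this filtration into one with $\G_a$-quotients exhibits $G_c$ as $K(x^{1/p})$-split, whence $G_c=G_{c,s}=G_{c,v}$ and $(1)$ holds.

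For $(1)\Rightarrow(2)$ I argue contrapositively. If $e_1\neq p$ then $f_1=p$ and $L(1_x)/L$ is purely inseparable of degree $p$, so $\Coker\eta_{L(1_x)/L}$ is a nontrivial unipotent $L$-group which is \emph{not} $L$-split by Lemma \ref{Weilstr}, since there is no $L$-embedding $L(1_x)\hookrightarrow L$. Now base change of a split group is split and the counit $W\times_{K(x^{1/p})}L\to\Coker\eta_{L(1_x)/L}$ is an epimorphism, so $K(x^{1/p})$-splitness of $W$ would force $L$-splitness of $\Coker\eta_{L(1_x)/L}$ (a quotient of a split group is split, \cite[Exercises 14.3.12. (2)]{Sp}), a contradiction. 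Hence $W$ is not $K(x^{1/p})$-split; being a quotient of $G_c$, this forces $G_c$ itself to be non-split, so $G_{c,s}=G_{c,v}\subsetneq G_c$ and $(1)$ fails.

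The hard part will be the $(1)\Rightarrow(2)$ direction, specifically pinning down $W$ as a genuine quotient algebraic group of $G_c$ and verifying the non-splitness of the Weil restriction $(\Coker\eta_{L(1_x)/L})_{L/K(x^{1/p})}$. The cleanest route is to reduce to the already available Lemma \ref{Weilstr} via the counit epimorphism $W\times_{K(x^{1/p})}L\twoheadrightarrow\Coker\eta_{L(1_x)/L}$; justifying this surjectivity and the identification of $W$ with the Weil-restricted cokernel (by the same bookkeeping as in Lemma \ref{forsplit}) is where the actual work lies, while the remaining steps are formal consequences of Lemma \ref{splitvector} and the filtration of $G_c$.
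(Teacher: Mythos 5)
Your argument is correct and follows essentially the same route as the paper: the equality $G_{c,v}=G_{c,s}$ via Lemma \ref{splitvector}, reduction to the top graded piece $W$ attached to $(L(1_{x})\otimes-)^{\ast}/(L\otimes-)^{\ast}$ via the filtration of Lemma \ref{connsmunip}, and detection of (non-)splitness of $W$ after base change to $L$ using Lemma \ref{Weilstr} together with \cite[Exercises 14.3.12. (2)]{Sp}. The only cosmetic difference is in the last step, where the paper tests splitness of $(\G_{m})_{L(1_{x})/K(x^{1/p})}\times_{\Spec K(x^{1/p})}\Spec L$ as an extension of $W_{L}$ by the $L$-split group $(\G_{m})_{L/K(x^{1/p})}\times_{\Spec K(x^{1/p})}\Spec L$, rather than via your counit epimorphism $W_{L}\twoheadrightarrow\Coker\eta_{L(1_{x})/L}$.
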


\begin{proof}
Since $G_{c}$ is $p$-torsion and unipotent, the desired equality $G_{c,v}=G_{c,s}$ holds.
Next, we show the equivalence between $1$ and $2$.
By Lemmas \ref{connsmunip}.2, \ref{connsmunip}.3, and \ref{connsmunip}.4 and the proof of Proposition \ref{connunip}, it suffices to show the fact that $e_{1}=p$ if and only if the algebraic group $\Coker((\G_{m})_{L/K}\to(\G_{m})_{L(1_{x})/K})$ is $K(x^{1/p})$-split.
Since we have
\begin{equation*}
\begin{split}
&\Coker((\G_{m})_{L/K}\to(\G_{m})_{L(1_{x})/K})\times_{\Spec K}\Spec L\\
\simeq\,&\Coker((\G_{m})_{L/K}\times_{\Spec K}\Spec L\to(\G_{m})_{L(1_{x})/K}\times_{\Spec K}\Spec L),
\end{split}
\end{equation*}
these algebraic groups are $L$-split if and only if $L=L(1_{x})$ by Lemma \ref{forsplit}, 
\cite[Exercises 14.3.12. (2)]{Sp}, and Lemma \ref{Weilstr}.
Therefore, the desired equivalence holds.
\end{proof}

\begin{thm}
We work under the setting of Notation-Setting \ref{forKnot}.
We define the following four algebraic subgroups of the algebraic group $\Pic^{0}_{C_{K(x^{1/p})}/K(x^{1/p})}$:
\begin{itemize}
\item
The smallest connected linear algebraic subgroup $G_{x}$ such that the quotient algebraic group $\Pic^{0}_{C_{K(x^{1/p})}/K(x^{1/p})}/G_{x}$ is an abelian variety over $K(x^{1/p})$ (cf.\,\cite[Theorem 9.2.1]{BLR}).
\item
$G_{x}(1):=\Ker(\Pic^{0}_{C_{K(x^{1/p})}/K(x^{1/p})}\to\Pic^{0}_{C(1_{x})/K(x^{1/p})})$.
\item
The largest vector subgroup $G_{x,v}$ (cf.\,Lemma \ref{splitvector} and Lemma \ref{maximalvec}).
\item
The largest $K(x^{1/p})$-split algebraic subgroup $G_{x,s}$ (cf.\,the proof of Lemma \ref{maximalvec}).
\end{itemize}
Then we have
$$G_{x,v}=G_{x,s}\subset G_{x}(1)\subset G_{x}.$$
Moreover, the following are equivalent:
\begin{enumerate}
\item
$G_{x,v}=G_{x,s}=G_{x}(1).$
\item
For any singular point $c$ of $C$, the ramification index ``$e_{1}$'' for $R=O_{C,c}$ is equal to $p$.
\end{enumerate}
\label{x01cor}
\end{thm}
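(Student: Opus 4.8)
The plan is to reduce the statement to the local computation of Proposition~\ref{pointsubgroup} by decomposing the first-step kernel $G_{x}(1)$, the only genuinely global ingredient being that passing to the normalization $C(1_{x})$ removes all $K(x^{1/p})$-split subgroups. First I would dispose of the easy inclusions. A homomorphism from a connected linear (in particular, $K(x^{1/p})$-split) group to an abelian variety is trivial, so $G_{x}$ is the largest connected linear subgroup of $\Pic^{0}_{C_{K(x^{1/p})}/K(x^{1/p})}$; since $G_{x}(1)$ is connected unipotent by Proposition~\ref{connunip}, this gives $G_{x}(1)\subset G_{x}$ and $G_{x,s}\subset G_{x}$, while $G_{x,v}\subset G_{x,s}$ holds because vector groups are split. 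Writing $S$ for the set of closed points $c$ of $C$ at which $R=\cO_{C,c}$ is not $\{x\}$-normal (i.e.\ at which $C_{K(x^{1/p})}$ is singular), the proofs of Lemma~\ref{connsmunip}.2 and Proposition~\ref{connunip} (cf.\ Notation-Setting~\ref{forKnot}) give a canonical decomposition $G_{x}(1)\simeq\bigoplus_{c\in S}G_{c}$ into the local groups of Notation-Setting~\ref{forKnot}, each of which is $p$-torsion.

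The heart of the matter is the inclusion $G_{x,s}\subset G_{x}(1)$, which I would obtain by proving that the quotient $\Pic^{0}_{C(1_{x})/K(x^{1/p})}=\Pic^{0}_{C_{K(x^{1/p})}/K(x^{1/p})}/G_{x}(1)$ has no nontrivial $K(x^{1/p})$-split subgroup; the split image of $G_{x,s}$ there is then trivial. Put $F=K(x^{1/p})$. Since $C(1_{x})$ is $F$-regular, its Picard scheme acquires a linear part only after inseparable base change, and one analyzes that part through the Weil-restriction description of Lemma~\ref{connsmunip}: for a purely inseparable $F'/F$ making $C(1_{x})_{F'}$ non-normal, the new local contributions are successive extensions of cokernels $\Coker\eta_{L''/F}$, where $L''$ is the residue field at a new singular point. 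Each such $L''$ involves $p$-th roots outside $F$, hence does not embed into $F$, so by Lemma~\ref{Weilstr} every $\Coker\eta_{L''/F}$ is $F$-wound; the higher infinitesimal pieces, split over $\overline{K}$, descend from $F'$ to $F$ as wound forms for the same reason. Thus the linear part of $\Pic^{0}_{C(1_{x})/F}$ is $F$-wound and contains no $\G_{a,F}$.

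Granting this, $G_{x,s}\subset G_{x}(1)$ is $p$-torsion, so by Lemma~\ref{splitvector} it is a vector group; hence $G_{x,s}\subset G_{x,v}$ and $G_{x,v}=G_{x,s}$, completing the chain $G_{x,v}=G_{x,s}\subset G_{x}(1)\subset G_{x}$. For the equivalence, $G_{x,s}$ is now precisely the largest split subgroup of the $p$-torsion unipotent group $\bigoplus_{c\in S}G_{c}$, which by the canonical vector-times-wound decomposition (Lemma~\ref{splitvector}) equals $\bigoplus_{c\in S}G_{c,s}$. Therefore $G_{x,s}=G_{x}(1)$ holds if and only if $G_{c,s}=G_{c}$ for every $c\in S$, and by Proposition~\ref{pointsubgroup} this is exactly the condition that $e_{1}=p$ at each such $c$; at the remaining closed points $G_{c}$ is trivial and the condition is vacuous, so this is condition (2) of the theorem.

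The main obstacle will be the windedness claim of the second paragraph. The linear part of $\Pic^{0}_{C(1_{x})/F}$ becomes a split vector group after base change to $\overline{K}$, and one must argue carefully that it descends to an $F$-wound form, so that no $\G_{a,F}$ maps into it; this is the inseparable-descent phenomenon that distinguishes the imperfect field $K(x^{1/p})$ from $\overline{K}$, and it is where Lemma~\ref{Weilstr}, rather than the perfect-field input behind Theorem~\ref{overclosed}, carries the weight of the proof.
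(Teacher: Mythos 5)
Your overall architecture is sound and in places tidier than the paper's: deducing $G_{x}(1)\subset G_{x}$ and $G_{x,s}\subset G_{x}$ from the observation that $G_{x}$ is also the \emph{largest} connected linear subgroup is cleaner than the paper's base change to $\overline{K}$, and deriving $G_{x,v}=G_{x,s}$ from ``$G_{x,s}\subset G_{x}(1)$ is $p$-torsion'' plus Lemma \ref{splitvector} is a nice reorganization (the paper instead injects $G_{x,s}/G_{x,v}$ into $\Pic^{0}_{C(1)/K^{1/p}}$ and invokes \cite[Proposition 9.2.4]{BLR} a second time). The treatment of the equivalence via the local decomposition $G_{x}(1)\simeq\bigoplus_{c}G_{c}$ and Proposition \ref{pointsubgroup} is exactly what the paper does. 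The problem is the one step you yourself flag as the main obstacle: showing that $\Pic^{0}_{C(1_{x})/F}$, $F=K(x^{1/p})$, admits no nontrivial $F$-split subgroup.

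Your argument for that step does not work as written, for two reasons. First, the Weil-restriction filtration of Lemma \ref{connsmunip} describes $\Ker(\Pic_{C'/F'}\to\Pic_{C/F'})$ for curves over the \emph{larger} purely inseparable extension $F'$ over which $C(1_{x})$ ceases to be normal; it is a filtration of a group over $F'$, not over $F$. Over the imperfect field $F$ there is no canonical $F$-model of the linear part of $\Pic^{0}_{C(1_{x})/F}$ (Chevalley's decomposition requires perfectness), so the phrase ``descend from $F'$ to $F$ as wound forms for the same reason'' has nothing to attach to. Second, even for the individual pieces, Lemma \ref{Weilstr} with $M=F'$ only yields that $\Coker\eta_{L''/F'}$ is not $F'$-split when $L''\not\hookrightarrow F'$; ``not split'' is strictly weaker than ``wound'' (a group of the form $\G_{a}\times W$ with $W$ wound is not split but contains $\G_{a}$), so the woundness you need is not what the cited lemma provides. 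The clean repair is the one the paper uses: cite \cite[Proposition 9.2.4]{BLR}, which says directly that the Picard scheme of the proper \emph{normal} curve $C(1_{x})$ over $F$ contains no subgroup isomorphic to $\G_{a,F}$. Since the image of $G_{x,s}$ in $\Pic^{0}_{C(1_{x})/F}$ is a quotient of a split group, hence split, and is unipotent (as $G_{x,s}\subset G_{x}$ and $G_{x}\times_{\Spec F}\Spec\overline{K}=G$ is unipotent), a nontrivial image would have a composition series terminating in $\G_{a,F}$ and hence contain one. With that substitution your proof closes and coincides in substance with the paper's.
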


\begin{proof}
First, we have $G_{x,v}\subset G_{x,s}$.
By Lemma \ref{splitvector} and Theorem \ref{overclosed}, we have a natural injective homomorphism
$$(G_{x,s}/G_{x,v})\times_{\Spec K(x^{1/p})}\Spec K^{1/p}\hookrightarrow \Pic^{0}_{C(1)/K^{1/p}}.$$
Since $G_{x,s}/G_{x,v}$ is $K(x^{1/p})$-split by \cite[Exercises 14.3.12. (2)]{Sp} and $\Pic^{0}_{C(1)/K^{1/p}}$ does not contain algebraic groups isomorphic to $\G_{a,K^{1/p}}$ by {\cite[Proposition 9.2.4]{BLR}}, $G_{x,v}=G_{x,s}$ holds.
Since $\Pic^{0}_{C(1_{x})/K(x^{1/p})}$ does not contain algebraic groups isomorphic to $\G_{a,K(x^{1/p})}$ again by \cite[Proposition 9.2.4]{BLR}, we have $G_{x,v}\subset G_{x}(1)$.
Next, we show that $G_{x}(1)\subset G_{x}$.
By Lemma \ref{fundamentalstructure}.4 and 5, the algebraic subgroup $G_{x}(1)\times_{\Spec K(x^{1/p})}\Spec\overline{K}$ is contained in $G$.
Moreover, by the definition of $G$ and $G_{x}$, we have $G\subset G_{x}\times_{\Spec K(x^{1/p})}\Spec\overline{K}$.

The desired equivalence follows from Lemma \ref{connsmunip} and Proposition \ref{pointsubgroup}.
\end{proof}

\label{simplecase}

\section{Examples}
\label{eg}
In this section, we give several examples of calculations of $\delta$-invariants.
In particular, we have the following.
\begin{prop}
The four cases appearing in Lemma \ref{qlem} actually occur.
Moreover, there exists an example where the first inequality of Lemma \ref{qlem}.3 (resp.\,Lemma \ref{qlem}.4) is strict. We also give an example where the first inequality of Lemma \ref{qlem}.3 (resp.\,Lemma \ref{qlem}.4) is an equality.
\end{prop}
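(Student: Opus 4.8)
The plan is to realize every case by exhibiting, for a perfect field $k$ of characteristic $p$, explicit complete discrete valuation rings $R = L[[\varpi]]$ with $L = k(s)$ or $L = k(s_1,s_2)$, turned into algebras over a rational function field $K = k(\tau)$ by declaring $x=\tau$ to be a prescribed power series whose residue is a $p^2$-th power. Such an $R$ is a $\{\tau\}$-integral, $\{\tau\}$-Japanese discrete valuation ring (completeness gives the Japanese property, and $\tau^{1/p}\notin\Frac(R)$ gives integrality), so Theorem \ref{q} and Lemma \ref{qlem} apply directly. The advantage is that $q(x)$ and $q(x^{1/p})$ can be read off by a single ``lowest obstruction'' rule: writing $x=\tau=\theta^{p^2}+\sum_{j\ge 1}a_j\varpi^{j}$ with $\theta\in L$ the residue of $x^{1/p^2}$ and $a_j\in L$, one checks from Lemma \ref{q<q} and Lemma \ref{r-x} that
\[
q(x) = \min\{\, j\ge 1 : a_j\neq 0 \ \text{and}\ (p\nmid j\ \text{or}\ a_j\notin L^{p})\,\},
\]
that $e_1=p$ exactly when this minimum occurs at a $j$ prime to $p$, and that the same rule computes $q(x^{1/p})$ after passing to $R(1_x)$, with $L^{p}$ replaced by $L^{p^2}$ when $e_1=p$ and with $L$ enlarged by the relevant $p$-th root when $e_1=1$. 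Since residue of $\tau$ equals $\theta^{p^2}$, one always has $x^{1/p^2}\in L\subseteq L(1_x)$, so the standing hypothesis of Lemma \ref{qlem} is automatic.

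First I would record four examples over $L=k(s)$ with $\theta=s$. Taking $\tau=s^{p^2}+\varpi^{q}$ with $\gcd(q,p)=1$ gives $q(x)=q(x^{1/p})=q$, hence $e_1=e_2=p$ (Case 1). Taking $\tau=s^{p^2}+s^{p}\varpi^{p}+\varpi^{p+1}$ gives $q(x)=p+1$ and $q(x^{1/p})=p$, hence $e_1=p,\ e_2=1$ (Case 2). Taking $\tau=s^{p^2}+s\,\varpi^{p^2}$ gives $q(x)=p^{2}$ and $q(x^{1/p})=p$, hence $e_1=e_2=1$ with $pq(x^{1/p})=q(x)$ (Case 3, equality). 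Taking $\tau=s^{p^2}+s\,\varpi^{p}$ gives $q(x)=p$ and $q(x^{1/p})=1$, hence $e_1=1,\ e_2=p$ with $pq(x^{1/p})=q(x)$ (Case 4, equality). In the last two the residue extension $L(2_x)/L$ is $k(s^{1/p^{2}})/k(s)$, which is simple, so the first inequalities of Lemma \ref{qlem}.3 and \ref{qlem}.4 are forced to be equalities, as predicted.

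To make those inequalities strict I would pass to $L=k(s_1,s_2)$, so that $[L:L^{p}]=p^2$, and insert a ``mixed'' term whose coefficient lies in $L^{p}\setminus L^{p^2}$, namely (with $\theta=s_1$)
\[
\tau = s_1^{p^2}+s_1^{p}\varpi^{p^2}+s_2\,\varpi^{2p^2}\quad(\text{Case 3, strict}),\qquad
\tau = s_1^{p^2}+s_1^{p}\varpi^{p}+s_2\,\varpi^{2p}\quad(\text{Case 4, strict}).
\]
For the first, the term $s_2\varpi^{2p^2}$ (coefficient $\notin L^{p}$) yields $q(x)=2p^2$ and enlarges the residue field by $s_2^{1/p}$, while $s_1^{p}\varpi^{p^2}$ (coefficient in $L^{p}\setminus L^{p^2}$) is invisible to $q(x)$ but becomes the lowest obstruction for $q(x^{1/p})=p$, enlarging the residue field by $s_1^{1/p}$; thus $e_1=e_2=1$, $pq(x^{1/p})=p^2<2p^2=q(x)$, and $L(2_x)=k(s_1^{1/p},s_2^{1/p})$ is non-simple over $L$, matching the equivalence in Lemma \ref{qlem}.3. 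The second example is the same computation with exponents divided by $p$, giving $e_1=1,\ e_2=p$ and $pq(x^{1/p})=p<2p=q(x)$.

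The main obstacle is not the arithmetic of $q$ but controlling the residue fields of the normalizations. I must guarantee that whenever $e_1=1$ the extension $L(1_x)/L$ has degree exactly $p$ rather than $p^2$, which forces every non-obstruction coefficient $a_j$ to lie in $L^{p}$ (so that $\tau^{1/p}=\theta^{p}+\sum_j a_j^{1/p}\varpi^{j/p}$ really lies in $R(1_x)=L(1_x)[[\varpi]]$, the termwise $p$-th root being legitimate in characteristic $p$), and that the single mixed coefficient genuinely survives to the second level, i.e.\ that its $p$-th root avoids $L(1_x)^{p}=L^{p}(a_{q(x)})$. This last requirement is precisely why the strict cases need $[L:L^{p}]\ge p^2$: over a field of $p$-rank one every finite purely inseparable extension is simple, so $L(2_x)/L$ is automatically simple and Lemma \ref{qlem}.3 can only give equality. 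Once this bookkeeping is in place, each verification reduces to expanding $\tau^{1/p}$ and invoking Theorem \ref{q}, and the $\delta$- and genus-change relations follow from Lemma \ref{qlem}; I note that the finiteness of $L/K$ used in Theorem \ref{mainineq} is not needed here, since only the ramification and conductor data enter the proposition.
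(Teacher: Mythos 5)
Your construction is correct, but it takes a genuinely different route from the paper's. The paper realizes each case as the local ring at a closed point of an explicit plane curve over $K=k(t)$ with $k$ algebraically closed (e.g.\ $(K[X,Y]/(X^{n}-(Y^{p^{2}}-t)))_{(X,Y^{p^{2}}-t)}$ for the totally ramified case), checks regularity and geometric integrality by the Jacobian criterion, and computes each normalization by exhibiting an explicit integral generator such as $Z=(Y^{p}-t^{1/p})^{a}/X^{b}$ with $Z^{p}=X$. You instead work with complete discrete valuation rings $L[[\varpi]]$ and encode everything in the power-series expansion of $\tau$, reading off $q(x)$ and $q(x^{1/p})$ by the coefficientwise obstruction rule that follows from Lemmas \ref{q<q} and \ref{r-x}; I checked each of your six expansions and they do produce the claimed values of $e_{1},e_{2},q(x),q(x^{1/p})$, and your observation that strictness in Lemma \ref{qlem}.3--4 forces $[L:L^{p}]\geq p^{2}$ (since over $k(s)$ every finite purely inseparable extension is simple) is a nice conceptual byproduct that the paper does not make explicit. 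The trade-offs are: (i) your rings are not essentially of finite type over $K$, and in the two strict examples the residue field $k(s_{1},s_{2})$ is not finite over $K=k(\tau)$, so those examples witness only the $q$- and $\delta$-statements of Lemma \ref{qlem} and not the genus-change inequalities of Theorem \ref{mainineq}, whereas the paper's examples (local rings of honest curves with finite residue extensions) do both; (ii) your general ``same rule computes $q(x^{1/p})$'' statement is slightly imprecise when $e_{1}=p$, because the expansion of $x^{1/p}$ in $R(1_{x})=L[[\varpi_{1}]]$ depends on the choice of $\varpi_{1}$ --- in your Case 2 it works because $\varpi_{1}=(x^{1/p}-r)/\varpi$ satisfies $\varpi_{1}^{p}=\varpi$ exactly, and none of your examples hit a configuration (a coefficient $a_{j}\notin L^{p}$ at an exponent $j>q(x)$ prime to $p$) where the termwise $p$-th root would fail. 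Neither point is a gap in the proof of the Proposition as stated, but both are worth flagging.
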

In this section, we fix an algebraically closed field $k$ of characteristic $p$, and we put $K:= k(t)$.
Moreover, let $n \geq 2$ be an integer which is coprime to $p$.

\subsection*{(1): the case where $e_{1}=e_{2}=p$}
\label{ramram}
We put
\[
R :=
(K[X,Y]/ X^{n}-(Y^{p^{2}}-t))_{(X, Y^{p^{2}}-t)}.
\] 
To show that (this) $R$ satisfies the assumptions on $R$ in Section \ref{simplecase}, we use the following argument:\\
By the Jacobian criterion for varieties over $k$ (resp.\,for varieties over $K$), one can show that $R$ is regular and integral (resp.\,geometrically reduced over $K$).
Moreover, since the residue field of $R$ is purely inseparable over $K$, the field $\Frac R$ is geometrically connected over $K$.
Therefore, $R$ is geometrically integral over $K$.$\cdots(\ast)$\\
Then the normalization of $R\otimes_{K}K^{1/p}$ is 
\[
R(1) =
(K^{1/p}[Z,Y]/ Z^{n} - (Y^{p}-t^{1/p}))_{(Z, Y^{p}-t^{1/p})},
\]
where we put 
\[
Z: = \frac{(Y^{p}-t^{1/p})^{a}}{X^{b}}
\]
so that $Z^p =X$.
Here, $a,b$ are integers such that $na-pb =1$.
Therefore, we have $t^{1/p^{2}} \in L(1)$ and $e_{1}=e_{2}=p$.

\subsection*{(2): the case where $e_{1}=p$ and $e_{2} =1$}
We put 
\[
R :=
(K[X,Y]/ X^{p+n}-X^{p}-Y^{p}(Y^{p^{2}}-t))_{(X,Y^{p^{2}}-t)},
\]
which is regular and geometrically integral over $K$ by the argument $(\ast)$. 
The normalization of $R\otimes_{K}K^{1/p}$ is
\[
R (1) =
(K^{1/p}[Z,Y]/Z^{p+n}-Z^{p}-Y(Y^{p}-t^{1/p}))_{(Z, Y^{p}-t^{1/p})},
\]
where we put 
\[
Z:= \frac{(X+ Y(Y^{p}-t^{1/p}))^{a}}{X^{b}}
\]
so that $Z^p =X$.
Here, $a,b$ are integers such that $a(p+n) - bp =1$.
Therefore, we have $e_{1}=p$ and $e_{2} =1$.

\subsection*{(3): the case where $e_{1}= e_{2} =1$}
We put
\[
R :=
(K[X,Y]/X^{p^2}-Y(Y^{p}-t))_{(X, Y^{p}-t)},
\]
which is regular and geometrically integral over $K$ by the argument $(\ast)$.
The normalization of $R\otimes_{K}K^{1/p}$ is 
\[
R(1) =
(K^{1/p}[X,Z]/X^{p}-Z(Z^{p}-t^{1/p}))_{(X,Z^{p}-t^{1/p})},
\]
where we put
\[
Z:= \frac{X^{p}}{Y-t^{1/p}}
\]
so that $Z^p=Y$.
Therefore, we have $t^{1/p^{2}} \in L(1)$ and  $e_{1}= e_{2} =1$.
Moreover, we have $pq(t^{1/p})= q(t)$ (cf.\,Lemma \ref{qlem}.3).

We also give another example where the first inequality of Lemma \ref{qlem}.3 is strict (i.e.\,$L(2_{x})$ is not a simple extension field of $L$).
We put $K':= k(s,t)$.
We put
\[
R' :=
(K'[X,Y]/ YX^{p^{3}}-s^{p}X^{p^{2}}-(Y^{p}-t))_{(X, Y^{p}-t)},
\]
which is regular and geometrically integral over $K'$ by the argument $(\ast)$.
The normalization of $R\otimes_{K'}K'(t^{1/p})$ is
\[
R'(1_{t}) =
(K'(t^{1/p})[X,Z]/ ZX^{p^{2}}- sX^{p}-(Z^{p}-t^{1/p}))_{(X, Z^{p}-t^{1/p})},
\]
where we put
\[
Z := 
\frac{sX^{p}+ (Y-t^{1/p})}{X^{p^{2}}}
\]
so that $Z^p=Y$.
Therefore, we have $t^{1/p^{2}} \in L(1_{t})$, $q(t)= p^{3}$, and $q(t^{1/p})= p$.
Thus, we have $pq(t^{1/p})<q(t)$.

\subsection*{(4): the case where $e_{1}=1$ and $e_{2} =p$}

We put 
\[
R:= (K[X,Y]/X^{np}-Y(Y^{p}-t))_{(X,Y^{p}-t)},
\]
which is regular and geometrically integral over $K$ by the argument $(\ast)$.
The normalization of $R_{K^{1/p}}$ is 
\[
R(1)=(K^{1/p}[X,Z]/X^{n}-Z(Z^{p}-t^{1/p}))_{(X,Z^{p}-t^{1/p})},
\]
where we put 
\[
Z:=\frac{X^{n}}{Y-t^{1/p}}
\]
so that $Z^p=Y$.
In this case, we have $t^{1/p^2}\in L(1)$, $e_{1}=1$, and $e_{2}=p$.
Moreover, we have $q(t)=np$ and $q(t^{1/p})=n$.
Thus, we have $pq(t^{1/p})=q(t)$ (cf.\,Lemma \ref{qlem}.4).

We also give another example where the first inequality of Lemma \ref{qlem}.4 is strict.
We take a positive integer $m$ such that $m>n$.
We put
\[
R' :=
(K[X,Y]/ X^{mp} - YX^{np} - Y (Y^{p}-t))_{(X,Y^{p}-t)},
\]
which is regular and geometrically integral over $K$ by the argument $(\ast)$.
The normalization of $R'\otimes_{K}K^{1/p}$ is 
\[
R' (1) =
(K^{1/p}[X,Z]/ X^{m} - Z X^{n} - Z (Z^{p}-t^{1/p}))_{(X, Z^{p}-t^{1/p})},
\]
where we put
\[
Z:= \frac{X^{m}}{X^{n} + (Y-t^{1/p})}
\]
so that $Z^p=Y$.
Therefore, in this case we have $t^{1/p^{2}} \in L(1), e_{1}=1$, and $e_{2} =p$.
Moreover, we have $q (t) = mp$ and $q(t^{1/p}) = n$. 
Thus, we have $pq(t^{1/p})<q(t)$.

\section{Relation between genus changes and Jacobian numbers}
\label{jac}
Jacobian numbers, which have been studied by many researchers (for example, \cite{Buchweitz1980},
\cite{Esteves2003},
\cite{Greuel2007},
\cite{IIL},
and \cite{Tjurina1969}), are useful invariants of singularities of curves.
In this section, we give a comparison between Jacobian numbers and genus changes (or equivalently, $\delta$-invariants).

First, we recall the definition of continuous derivations to define the Jacobian number for a discrete valuation ring (of sufficiently general class) over a field.
See \cite[\S20.3 and \S20.7]{EGA41} for fundamental treatment of the notion of continuous derivations.
Let $K$ be a field, $R$ a Noetherian local ring over $K$, $\fm$ the maximal ideal of $R$, and $L$ the residue field of $R$.
In this section, in the case where $R$ is complete, for any finitely generated $R$-module $M$, we always consider $M$ as an $\fm$-adic topological $R$-module.

\begin{dfn}[cf.\,Remarks \ref{compareEGA}.1 and 2]
\label{defomega}
Suppose that $R$ is complete.
For any finitely generated $R$-module $M$, we write $\Der_{K}^{c}(R,M)$ for the set of continuous $K$-derivations from $R$ to $M$.
We define an $R$-module $\Omega^{1,c}_{R/K}$ to be a finitely generated $R$-module such that there exists an isomorphism of functors
\begin{equation*}
\begin{split}
\Hom_{R}(\Omega^{1,c}_{R/K},-)\simeq \Der_{K}^{c}(R,-)
\end{split}
\end{equation*}
from the category of finitely generated $R$-modules to the category of $R$-modules.
(Note that $\Omega^{1,c}_{R/K}$ does not always exist.)
\end{dfn}

\begin{rem}
\begin{enumerate}
\item
Regard $K$ as a discrete topological ring and $R$ as an $\fm$-adic topological ring.
In \cite[$0_{\text{IV}}$.20.7.14]{EGA41}, an $R$-module $\widehat{\Omega}^{1}_{R/K}$ is defined.
By the construction of $\widehat{\Omega}^{1}_{R/K}$, we have
$$\widehat{\Omega}^{1}_{R/K}\simeq \varprojlim_{n}\Omega^{1}_{R/K}\otimes_{R}(R/\fm^{n}).$$
By \cite[($0_{\text{IV}}$.20.7.14.4)]{EGA41}, if $\widehat{\Omega}^{1}_{R/K}$ is a finitely generated $\widehat{R}$-module, we have a canonical isomorphism
$$\widehat{\Omega}^{1}_{R/K}\simeq \Omega^{1,c}_{\widehat{R}/K}.$$
(Note that ``$\Omega^{1}_{R/K}$'' in the sense of \cite[D\'efinition $0_{\text{IV}}$.20.4.3]{EGA41} does not coincide with $\Omega^{1,c}_{R/K}$ in general.)
\item
Suppose that $R$ is complete and $L$ is finitely generated over $K$.
The existence of the module $\Omega^{1,c}_{R/K}$ follows from \cite[Proposition $0_{\text{IV}}$.20.7.15]{EGA41} and Remark \ref{compareEGA}.1.
Here, we give an explicit construction of $\Omega^{1,c}_{R/K}$.
Suppose that we can take a surjective $K$-homomorphism
\begin{align}
\varphi\colon K[[T_{1},\ldots,T_{m}]][X_{1},\ldots,X_{n}]\to R
\label{presentation}
\end{align}
such that the image of $T_{i}$ is contained in $\fm$ for each $i$.
We choose a system of generators $f_{1},\ldots,f_{l}$ of the kernel of $\varphi$.
Then the module
\begin{equation}
\label{equationOmega}
M:=
\frac{\bigoplus_{1\leq i\leq m}RdT_{i}
\oplus\bigoplus_{1\leq j\leq n}RdX_{j}}{\langle \sum_{1\leq i\leq m}\frac{\partial f_{t}}{\partial T_{i}}dT_{i}
+\sum_{1\leq j\leq n}\frac{\partial f_{t}}{\partial X_{j}}dX_{j}\mid 1\leq t\leq l\rangle}
\end{equation}
and a group homomorphism $d\colon R\to M$ sending $f$ to 
\begin{equation}
\label{equationd}
\sum_{1\leq i\leq m}\frac{\partial f}{\partial T_{i}}dT_{i}
+\sum_{1\leq j\leq n}\frac{\partial f}{\partial X_{j}}dX_{j}
\end{equation}
satisfy the condition of the definition of $\Omega^{1,c}_{R/K}$.
\item
If $R$ is essentially of finite type over $K$, we have a canonical isomorphism
$$\Omega^{1}_{R/K}\otimes_{R}\widehat{R}\simeq \Omega^{1,c}_{\widehat{R}/K}$$
by Remark \ref{compareEGA}.1.
\item
As in Remark \ref{compareEGA}.2, suppose that $R$ is complete and has the presentation (\ref{presentation}).
Moreover, suppose that $R$ is a domain.
Then $\Frac(R)$ (resp.\,any finite dimensional $\Frac(R)$-linear space $V$) has a canonical topological field structure (resp.\,a canonical topological $\Frac(R)$-linear space structure) such that the topology of $R$ (resp.\,any finitely generated $R$-submodule of $V$) coincides with the relative topology from $\Frac(R)$ (resp.\,$V$).
Let $D\colon \Frac(R)\to V$ be a continuous derivation over $K$ to a finite dimensional $\Frac(R)$-linear space.
Then $D(R)$ is contained in the $R$-subsmodule generated by all $D(T_{i})$ and $D(X_{j})$.
Hence, we have a natural isomorphism of functors
$$\Hom_{\Frac(R)}(\Omega^{1,c}_{R/K}\otimes_{R}\Frac(R),-)\simeq \Der_{K}^{c}(\Frac(R),-)$$
from the category of finite dimensional $\Frac(R)$-linear spaces to the category of $\Frac(R)$-linear spaces.
Here, $\Der_{K}^{c}(\Frac(R),-)$ is the set of continuous $K$-derivations.

Let $R'$ be an integral extension ring $R'\supset R$ contained in $\Frac(R)$.
Note that $R'$ is complete local and finite over $R$ since $R$ is complete.
Then we have a natural isomorphism $\Omega^{1,c}_{R/K}\otimes_{R}\Frac(R)\simeq \Omega^{1,c}_{R'/K}\otimes_{R'}\Frac(R)$.
\item
For later use, we use a similar convention for the product of complete local rings in the following way:
Let $R'$ be a Noetherian ring over $K$.
Suppose that $R'$ is the product of finitely many complete local rings $R_{k}$.
For any finitely generated $R'$-module $M$, we have a canonical decomposition $M = \prod_{k} M_{k}$, where $M_{k}$ is a finitely generated $R'$-module supported in $\Spec R_{k}$.
We put $\Der_{K}^{c} (R', M) := \prod_{k} \Der_{K}^{c} (R_{k}, M_{k})$.
As in Definition \ref{defomega}, if $\Der_{K}^{c} (R', -)$ is represented by a finitely generated $R'$-module, we denote it by $\Omega_{R'/K}^{1,c}$.
Then $\Omega_{R'/K}^{1,c}$ exists if and only if, for all $k$, the modules $\Omega_{R_{k}/K}^{1,c}$ exist.
Moreover, we have
\[
\Omega_{R'/K}^{1,c} \simeq \prod_{k} \Omega_{R_{k}/K}^{1,c}
\]
if $\Omega_{R'/K}^{1,c}$ exists.
As in Remark \ref{compareEGA}.2, if we have a surjective $K$-homomorphism
\begin{align}
\varphi\colon K[[T_{1},\ldots,T_{m}]][X_{1},\ldots,X_{n}]\to R'
\end{align}
such that the $k$-component of the image of $T_{i}$ is contained in the maximal ideal of $R_{k}$ for any $i,k$,
then the $R'$-module $M$ and $d\colon R' \rightarrow M$ that are defined by the formulas (\ref{equationOmega}) and (\ref{equationd}) satisfy the condition in the definition of $\Omega_{R'/K}^{1,c}$.
In this case, for any finite field extension $K'/K$, we have
$R_{k}\otimes_{K} K' \simeq \prod_{k'}S_{k,k'}$,
where $S_{k'}$ are complete local rings over $K'$.
Therefore, $R' \otimes_{K} K'$ is also the product of finite complete local rings.
By the above construction, we have
\[
\Omega^{1,c}_{R'/K}\otimes_{K}K' \simeq \Omega^{1,c}_{R' \otimes_{K} K'/K'}.
\]
\end{enumerate}
\label{compareEGA}
\end{rem}

\begin{dfn}
Suppose that $R$ is $1$-dimensional and $L$ is finite over $K$.
We define {\it the Jacobian number} of $R$ over $K$ to be
$$\jac (R):=\dim_{K}(\widehat{R}/\mathrm{Fitt}_{1}\Omega^{1,c}_{\widehat{R}/K}) \in \Z_{\geq 0} \cup \{\infty\}.$$
Here, ``$\mathrm{Fitt}_{1}-$" denotes the first Fitting ideal of the $R$-module.
\end{dfn}

\begin{rem}
Suppose that $R$ is essentially of finite type over $K$.
Then $\jac(R)$ agrees with the Jacobian number defined in \cite[Definition 4.1]{IIL}
by Remark \ref{compareEGA}.3.
\end{rem}

In the rest of this section, we consider the following situation:
Let $K$ be a field of characteristic $p$ satisfying $[K^{1/p} : K] < \infty$, $R$ a discrete valuation ring over $K$ which is geometrically integral over $K$, $\varpi$ a uniformizer of $R$, and $L$ the residue field of $R$.
We suppose that $L/K$ is a finite extension.
Let $R(1)$ be the normalization of $R\otimes_{K}K^{1/p}$.
We further suppose that $R(1)$ is finite over $R\otimes_{K}K^{1/p}$.
We write $g_{10}$ for the natural number
\[
\dim_{K^{1/p}}(R(1)/(R\otimes_{K}K^{1/p})).
\]
We note that we use the same notation as in Section \ref{geomnormalsection}.
The main theorem in this section is the following:

\begin{thm}
\label{genjacob}
In the above situation, we have
\[
\frac{g_{10}}{(p-1)/2} = \frac{\jac (R)}{p}.
\]
\end{thm}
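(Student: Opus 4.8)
The plan is to reduce to the complete local ring and to a single $p$-basis element, where Theorem \ref{q} applies directly, and to package the Jacobian number as the $K$-dimension of a torsion module.

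First, since $\jac(R)$ depends only on $\widehat R$ by definition, and since $g_{10}$ is unchanged under completion (as $R(1)$ is finite over $R\otimes_{K}K^{1/p}$, so $\widehat{R(1)}=\widehat R\otimes_{R}R(1)$ is the normalization of $\widehat R\otimes_{K}K^{1/p}$, and finite lengths are preserved), I would replace $R$ by $\widehat R$. By the Cohen structure theorem write $\widehat R=L[[\varpi]]$ with $L$ the residue field, and fix a $p$-basis $x_{1},\dots,x_{m}$ of $K$ (so $p^{m}=[K^{1/p}:K]=[L:L^{p}]$) and a $p$-basis $y_{1},\dots,y_{m}$ of $L$. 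The module $\Omega^{1,c}_{\widehat R/K}$ exists by Remark \ref{compareEGA}.2 (as $L/K$ is finite), and in the presentation there, $\Omega^{1,c}_{\widehat R/\mathbb{F}_{p}}$ is free over $\widehat R$ on $dy_{1},\dots,dy_{m},d\varpi$, with $\Omega^{1,c}_{\widehat R/K}$ its cokernel by the relations $dx_{1}=\dots=dx_{m}=0$. Since $\widehat R$ is a DVR and $\Frac(\widehat R)\otimes_{K}K^{1/p}$ is a field (because $\widehat R$ is $\{B\}$-integral by Proposition \ref{B-prop}.7), the $dx_{i}$ are independent over $\Frac(\widehat R)$, so $\Omega^{1,c}_{\widehat R/K}$ has generic rank $1$. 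By the structure theorem over the DVR $\widehat R$, $\Omega^{1,c}_{\widehat R/K}\simeq\widehat R\oplus T$ with $T$ its torsion submodule, whence
$$\jac(R)=\dim_{K}\bigl(\widehat R/\mathrm{Fitt}_{1}\,\Omega^{1,c}_{\widehat R/K}\bigr)=[L:K]\cdot\length_{\widehat R}(T)=\dim_{K}T.$$

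The heart is the case $m=1$, i.e.\ $K^{1/p}=K(x^{1/p})$ with $x=x_{1}$. Writing the image of $x$ under $K\hookrightarrow\widehat R=L[[\varpi]]$ as $\sum_{j}c_{j}\varpi^{j}$ with $c_{j}\in L$, one computes $dx=A\,dy+B\,d\varpi$ with $A=\sum_{j}(\partial c_{j}/\partial y)\varpi^{j}$ and $B=\sum_{j}jc_{j}\varpi^{j-1}$. As $\ker(\partial/\partial y)=L^{p}$, I get $v(A)=\min\{j:c_{j}\notin L^{p}\}$ and $v(B)+1=\min\{j:p\nmid j,\ c_{j}\neq0\}$, while the definition of $q(x)$ (only the degrees $j$ with $p\nmid j$ or $c_{j}\notin L^{p}$ resist cancellation by a $p$-th power) yields the key identity $q(x)=\min(v(A),v(B)+1)$. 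Since $\Omega^{1,c}_{\widehat R/K}=(\widehat R\,dy\oplus\widehat R\,d\varpi)/\langle A\,dy+B\,d\varpi\rangle$, Smith normal form gives $\dim_{L}T=\min(v(A),v(B))$. A short case analysis using Theorem \ref{q}.2 (namely $e_{1}=p\iff p\nmid q(x)$) then gives $\dim_{L}T=q(x)-1$ when $e_{1}=p$ and $\dim_{L}T=q(x)$ when $e_{1}=1$. Combining with the $\delta_{10}$ formulas of Theorem \ref{q}, with $g_{10}=\delta_{10}[L:K(x^{1/p})]$ from Lemma \ref{calcul}.3, and with $[L:K]=p[L:K(x^{1/p})]$, both cases give
$$\jac(R)=\dim_{K}T=[L:K]\cdot\dim_{L}T=\frac{2p}{p-1}\,g_{10}.$$

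For general $m$ I would argue by dévissage along the tower $K\subset K(x_{1}^{1/p})\subset\dots\subset K^{1/p}$, the same partition of $K^{1/p}/K$ used in Proposition \ref{Bgeqp}. The base-change compatibility $\Omega^{1,c}_{\widehat R/K}\otimes_{K}K'\simeq\Omega^{1,c}_{\widehat R\otimes_{K}K'/K'}$ of Remark \ref{compareEGA}.4--5 transports the computation one simple extension at a time: after adjoining $x_{1}^{1/p}$ I replace $\widehat R$ by the completed normalization of $R(1_{x_{1}})$ over $K(x_{1}^{1/p})$ (using Lemma \ref{calcul}.1 when $x_{1}^{1/p}\notin L$, where the step is normal and contributes nothing) and iterate, while on the genus side $g_{10}$ is the sum of the step contributions governed by Theorem \ref{q} and Lemma \ref{qlem}. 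The main obstacle is exactly this telescoping: I must control how the torsion submodule of $\Omega^{1,c}$ (equivalently $\mathrm{Fitt}_{1}$) changes when one both base-changes $K\rightsquigarrow K(x_{1}^{1/p})$ and passes to the normalization of the resulting non-normal ring, relating $\Omega^{1,c}_{\widehat R\otimes_{K}K_{1}/K_{1}}$ to $\Omega^{1,c}_{\widehat{R(1_{x_{1}})}/K_{1}}$ through the cotangent sequence of the normalization and matching the length defect with the first-step genus change via the conductor-versus-$\delta$ identity of Remark \ref{conductorvsgenuschange}. Carrying this through all four ramification patterns of Lemma \ref{qlem}, and tracking the residue-degree factors $[L:K(x^{1/p})]$ at each stage, is the delicate part; the formal reductions and the single-variable computation above are the routine ones.
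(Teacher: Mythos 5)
Your overall strategy is the same as the paper's: reduce to the complete local ring, identify $\jac(R)$ with $\dim_{K}$ of the torsion of $\Omega^{1,c}_{\widehat{R}/K}$ (this is exactly Lemma \ref{lemjactor}, via the generic rank~$1$ statement of Lemma \ref{lemtorsion}), and then d\'evissage along the tower $K\subset K(x_{1}^{1/p})\subset\cdots\subset K^{1/p}$. Your computation in the case $[K:K^{p}]=p$ is correct and is a genuinely different (and arguably cleaner) route than the paper's: you expand the image of $x$ in $L[[\varpi]]$ as $\sum_{j}c_{j}\varpi^{j}$, read off $q(x)=\min\{j: p\nmid j,\ c_{j}\neq 0\ \text{or}\ p\mid j,\ c_{j}\notin L^{p}\}=\min(v(A),v(B)+1)$, and get the torsion length from Smith normal form; the paper instead builds the explicit presentation of Theorem \ref{lemgen} with the polynomials $P_{i}$ and identifies the step kernel as the cyclic module $R_{1}da^{p}$ (Lemmas \ref{relemkernel} and \ref{lemkernel}). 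Both give $\dim_{L}T=q(x)-1$ or $q(x)$ according as $p\nmid q(x)$ or $p\mid q(x)$, matching Lemma \ref{relemkernel}.3. (Minor caution: one cannot in general choose the coefficient field $L\subset\widehat{R}$ to contain $K$, but your argument only uses the expansion of the image of $x$ in $L[[\varpi]]$, so this is harmless.)

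The genuine gap is the general case $[K:K^{p}]=p^{c}$ with $c>1$, which you explicitly defer as ``the delicate part.'' Two things are missing. First, the telescoping identity $\dim_{K}\Omega^{1,c}_{R/K,\mathrm{tor}}=\sum_{i}\dim_{K_{i}}\Ker\phi_{i}$ is not a formality: one needs (i) that the \emph{full} torsion equals $\Ker\bigl(\Omega^{1,c}_{R/K}\otimes_{R}R(1)\to\Omega^{1,c}_{R(1)/K^{1/p}}\bigr)$ for the total extension $K^{1/p}/K$ --- this is Lemma \ref{lemtorsion}, proved using $R=R(1)^{p}[t^{1/p}]$, and it is \emph{false} step-by-step (for a partial extension the step kernel $R_{i}\,da^{p}$ is only part of the torsion); and (ii) the exactness of $0\to\Ker f\to\Ker(g\circ f)\to\Ker g\to 0$ (Lemma \ref{lemgenmain}), whose surjectivity on the right rests on the monogenicity $R_{i}=R_{i-1}[a]$ (Lemma \ref{relemkernel}.1, via Lemma \ref{choiceofr}) and on $a^{p}$ lying in a smaller ring so that $da^{p}$ lifts. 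Second, the computation of each $\Ker\phi_{i}\simeq R_{i}/(\varpi^{q})$ or $R_{i}/(\varpi^{q-1})$ in the presence of the earlier extensions requires the structure result Theorem \ref{lemgen} (or an argument replacing it); your single-variable Smith-normal-form computation does not directly apply once $[L:L^{p}]>p$, since $\Omega^{1,c}_{R_{i-1}/K_{i-1}}$ is then a quotient of a free module of rank $m+1$ by several relations. Your proposed detour through ``the cotangent sequence of the normalization'' and the conductor-versus-$\delta$ identity of Remark \ref{conductorvsgenuschange} is not what is needed and is not obviously workable: the paper never compares $\Omega^{1,c}_{\widehat{R}\otimes_{K}K_{1}/K_{1}}$ with $\Omega^{1,c}_{\widehat{R(1_{x_{1}})}/K_{1}}$; it works throughout with the normalizations $R_{i}$ and the explicit generators $da^{p}$ of the step kernels. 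So the reductions and the rank-one case are in order, but the core of the multi-variable argument remains to be supplied.
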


The rest of this section is devoted to proving Theorem \ref{genjacob}.

First, we may assume that $R$ is complete by the assumption and Proposition \ref{B-prop}.7.
Since 
\[
(\mathrm{Fitt}_{1} \Omega_{R/K}^{1,c})\otimes_{K}K' \simeq \mathrm{Fitt}_{1} (\Omega_{R/K}^{1,c} \otimes_{K} K')
\]
for any finite field extension $K'$ over $K$ by Remark \ref{compareEGA}.5, we may assume $L$ is purely inseparable over $K$.
Moreover, we have
\[
\Omega^{1,c}_{(R\otimes_{K}K^{\sep})^{\wedge}/K^{\sep}} \simeq \Omega^{1,c}_{R/K}\otimes K^{\sep}
\]
by the construction in Remark \ref{compareEGA}.2. (Here, $(R\otimes_{K}K(x^{1/p}))^{\wedge}$ denotes the completion of $R\otimes_{K}K(x^{1/p})$.)
Therefore, by taking the base change and the completion again, we may assume that $K$ is separably closed and $R$ is complete.

We write $\Omega^{1,c}_{R/K,\mathrm{tor}}$ for the torsion part of $\Omega^{1,c}_{R/K}$.

\begin{lem}
\label{lemtorsion}
Write $\phi$ for the natural homomorphism 
\[
\Omega^{1,c}_{R/K} \otimes_{R} R(1) \rightarrow \Omega^{1,c}_{R(1)/K^{1/p}}.
\]
Then we have
$$\langle dr\mid r\in R(1)^{p}\rangle_{R}=\Omega^{1,c}_{R/K,\mathrm{tor}}
\quad\text{and}\quad
\Omega^{1,c}_{R/K,\mathrm{tor}} \otimes_{R} R(1) =  \Ker \phi,$$
and the induced homomorphism
\begin{align}
\Omega^{1,c}_{R/K}\otimes_{R}\Frac(R(1))\to\Omega^{1,c}_{R(1)/K^{1/p}}\otimes_{R(1)}\Frac(R(1))
\label{aftertensor}
\end{align}
is an isomorphism of $1$-dimensional $\Frac(R(1))$-linear spaces.
\end{lem}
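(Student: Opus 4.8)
The plan is to handle the three assertions in the order: the isomorphism \eqref{aftertensor}, then the description of $\Omega^{1,c}_{R/K,\mathrm{tor}}$, then the computation of $\Ker\phi$. I will use the reductions already made: $K$ is separably closed, $R$ is complete, and $L/K$ is finite purely inseparable, so by Cohen's structure theorem $R\simeq L[[\varpi]]$ and $R(1)\simeq L(1)[[\varpi_{1}]]$ with $[L(1):L(1)^{p}]=[K^{1/p}:K]<\infty$. Since $R$ is geometrically integral over $K$, the field $\Frac(R)$ is separable over $K$, and by Lemma \ref{element} (together with the discussion after Lemma \ref{injections}) we have $\Frac(R(1))=\Frac(R)\otimes_{K}K^{1/p}$, a purely inseparable extension of $\Frac(R)$.

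First I would show that \eqref{aftertensor} is an isomorphism. By Remark \ref{compareEGA}.4 the two sides, after passing to fraction fields, corepresent the functors of continuous $K$- (resp.\ $K^{1/p}$-) derivations of $\Frac(R)$ (resp.\ $\Frac(R(1))$). Because $\Frac(R(1))=\Frac(R)\otimes_{K}K^{1/p}$ and any continuous $K^{1/p}$-derivation kills $K^{1/p}$, restriction along $\Frac(R)\hookrightarrow\Frac(R(1))$ and extension by $K^{1/p}$-linearity are mutually inverse; hence restriction on derivations is bijective, and dually \eqref{aftertensor} is an isomorphism. That the common dimension equals $1$ will come out of the rank computation in the next step.

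The heart of the argument is the equality $\langle dr\mid r\in R(1)^{p}\rangle_{R}=\Omega^{1,c}_{R/K,\mathrm{tor}}$. I would set $P:=R(1)^{p}$ and first record that $K\subseteq P\subseteq R$: indeed $K^{1/p}$ is integral over $K\subseteq R$, so $K^{1/p}\subseteq R(1)$ and $K=(K^{1/p})^{p}\subseteq P$. Frobenius gives a ring isomorphism $R(1)\xrightarrow{\sim}P$, so $P$ is a complete discrete valuation ring; moreover $R$ is finite over $P$, as $R\subseteq R(1)$ and $R(1)=L(1)[[\varpi_{1}]]$ is finite over $P=L(1)^{p}[[\varpi_{1}^{p}]]$ thanks to $[L(1):L(1)^{p}]<\infty$. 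Using $R(1)\simeq L(1)[[\varpi_{1}]]$ one computes $[\Frac(R(1)):\Frac(P)]=[\Frac(R(1)):\Frac(R(1))^{p}]=p^{n+1}$ where $p^{n}=[K^{1/p}:K]$, while $[\Frac(R(1)):\Frac(R)]=[K^{1/p}:K]=p^{n}$; hence $[\Frac(R):\Frac(P)]=p$. Thus $R/P$ is a degree-$p$ extension of complete discrete valuation rings and is therefore monogenic, $R=P[\alpha]$ with $\alpha^{p}\in R^{p}\subseteq P$ (cf.\ {\cite[I \S6]{Ser}}); since the minimal polynomial $X^{p}-\alpha^{p}$ has vanishing derivative, $\Omega^{1,c}_{R/P}\simeq R\,d\alpha$ is free of rank one, in particular torsion-free.

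Finally I would assemble the conclusions. The fundamental exact sequence for $K\to P\to R$ identifies $\langle dr\mid r\in R(1)^{p}\rangle_{R}$ with $\Ker(\Omega^{1,c}_{R/K}\to\Omega^{1,c}_{R/P})$; and since $\Frac(P)=\Frac(R)^{p}K$, every $r\in\Frac(P)$ has $dr=0$ in $\Omega^{1,c}_{\Frac(R)/K}$, so this map is a generic isomorphism of rank-one modules. Its kernel is then exactly $\Omega^{1,c}_{R/K,\mathrm{tor}}$: the kernel is torsion because it dies generically, and the torsion maps to $0$ in the torsion-free module $\Omega^{1,c}_{R/P}$. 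This proves the second assertion and fixes the generic rank at $1$. For $\Ker\phi$, freeness of $R(1)$ over $R$ gives $\mathrm{tor}(\Omega^{1,c}_{R/K}\otimes_{R}R(1))=\Omega^{1,c}_{R/K,\mathrm{tor}}\otimes_{R}R(1)$, which contains $\Ker\phi$ by \eqref{aftertensor}; conversely each generator $dr$ (with $r=y^{p}\in R(1)^{p}$) satisfies $\phi(dr)=d(y^{p})=0$ in $\Omega^{1,c}_{R(1)/K^{1/p}}$, so by the second assertion $\Omega^{1,c}_{R/K,\mathrm{tor}}\otimes_{R}R(1)\subseteq\Ker\phi$, giving equality. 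The main obstacle is the third paragraph: one must establish that $R$ is finite and monogenic of degree $p$ over the subring $R(1)^{p}$ and that the ordinary relative differential module computes the continuous one there. Both rest on the completeness of $R$ together with the finiteness $[L(1):L(1)^{p}]=[K^{1/p}:K]<\infty$, and it is precisely this finiteness that has to be invoked with care.
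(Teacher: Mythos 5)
Your argument is correct and follows essentially the same route as the paper's: both hinge on exhibiting $R$ as the monogenic degree-$p$ extension $R(1)^{p}[\alpha]$ with $\alpha^{p}\in R(1)^{p}$ (justified via $[R(1):R(1)^{p}]=p\,[K^{1/p}:K]$), reading off from the resulting presentation that $\Omega^{1,c}_{R/K}$ is the sum of $\langle dr\mid r\in R(1)^{p}\rangle_{R}$ and a free rank-one summand, and closing the loop on $\Ker\phi$ with $d(y^{p})=0$ together with flatness of $R(1)$ over $R$. The only cosmetic difference is that you package the splitting as the cotangent sequence for $K\to R(1)^{p}\to R$ and a torsion-free $\Omega^{1,c}_{R/P}$ over the non-field base $P=R(1)^{p}$ (a definitional extension you rightly flag, but harmless since that quotient is just $R\,d\alpha$), whereas the paper writes the direct-sum decomposition of $\Omega^{1,c}_{R/K}$ explicitly and cites its Remark 7.2 for the generic isomorphism.
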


\begin{proof}
The homomorphism (\ref{aftertensor}) is an isomorphism by Remark \ref{compareEGA}.2 and Remark \ref{compareEGA}.4.
From this and the calculation $dr^{p}=pr^{p-1}dr=0$, we have
$$\langle dr\mid r\in R(1)^{p}\rangle_{R} \otimes_{R}R(1) \subset \Ker \phi\subset\Omega^{1,c}_{R/K, \mathrm{tor}} \otimes_{R}R(1).$$
Since $R$ is complete, we have $[R(1):R(1)^{p}] = [K^{1/p}:K]p$.
Therefore, there exists an element $t \in R(1)^{p}$ such that 
\[
R = R(1)^{p}[t^{1/p}] \simeq R(1)^{p}[X]/(X^{p}-t).
\]
By Remark \ref{compareEGA}.2, we have
\begin{align}
\Omega^{1,c}_{R/K}
&\simeq((\Omega^{1,c}_{R(1)^{p}/K}\otimes_{R(1)^{p}}R)\oplus RdX)/Rd(X^{p}-t)\\
&\simeq(\Omega^{1,c}_{R(1)^{p}/K}\otimes_{R(1)^{p}}R/Rdt)\oplus Rdt^{1/p}.
\label{omegacomputation}
\end{align}
In particular, $dt^{1/p}$ is torsion free in $\Omega^{1,c}_{R/K}$.
This shows that linear spaces in (\ref{aftertensor}) are $1$-dimensional and
\[
\Omega^{1,c}_{R/K, \mathrm{tor}}\subset  (\Omega^{1,c}_{R(1)^{p}/K} \otimes_{R(1)^{p}}R)/Rdt=\langle dr\mid r\in R(1)^{p}\rangle_{R}
\]
holds.
We finish the proof of Lemma \ref{lemtorsion}.
\end{proof}

\begin{lem}[cf.\,\cite{Rim} and {\cite[Proposition 4.7]{IIL}}]
\label{lemjactor}
The equality
\[
\jac (R) = \dim_{K} (\Omega^{1,c}_{R/K,\mathrm{tor}})
\]
holds.
\end{lem}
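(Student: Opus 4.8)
The plan is to reduce the identity to a Fitting-ideal computation over the discrete valuation ring $R$, feeding on the explicit splitting of $\Omega^{1,c}_{R/K}$ established in Lemma \ref{lemtorsion}. Since $R$ is complete, the definition of the Jacobian number reads $\jac(R)=\dim_{K}(R/\mathrm{Fitt}_{1}\Omega^{1,c}_{R/K})$. By the isomorphism \eqref{omegacomputation} in the proof of Lemma \ref{lemtorsion}, $\Omega^{1,c}_{R/K}$ decomposes as a direct sum $T\oplus R\,dt^{1/p}$, where $T:=\Omega^{1,c}_{R/K,\mathrm{tor}}$ is its torsion submodule and the summand $R\,dt^{1/p}$ is free of rank one. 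In particular $\Omega^{1,c}_{R/K}$ has $R$-rank one, and $T$, being finitely generated and torsion over the DVR $R$, has finite length.

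First I would record the behaviour of Fitting ideals under this direct sum. For the free summand one has $\mathrm{Fitt}_{0}(R)=0$ and $\mathrm{Fitt}_{1}(R)=R$, so the product formula $\mathrm{Fitt}_{n}(M\oplus N)=\sum_{a+b=n}\mathrm{Fitt}_{a}(M)\,\mathrm{Fitt}_{b}(N)$ gives $\mathrm{Fitt}_{1}(\Omega^{1,c}_{R/K})=\mathrm{Fitt}_{0}(T)$. Next I would compute $\mathrm{Fitt}_{0}(T)$ for a finite-length torsion module over the DVR $R$: writing $T\cong\bigoplus_{i}R/\fm^{n_{i}}$ by the structure theorem over a principal ideal domain, a diagonal presentation matrix shows $\mathrm{Fitt}_{0}(T)=\prod_{i}\fm^{n_{i}}=\fm^{\length_{R}(T)}$. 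Hence $\mathrm{Fitt}_{1}\Omega^{1,c}_{R/K}=\fm^{\length_{R}(T)}$.

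Combining these, I obtain $\dim_{K}(R/\mathrm{Fitt}_{1}\Omega^{1,c}_{R/K})=\dim_{K}(R/\fm^{\length_{R}(T)})$, and both sides of the asserted equality are then seen to equal $\length_{R}(T)\cdot[L:K]$. Indeed, the module $R/\fm^{\length_{R}(T)}$ has length $\length_{R}(T)$ with successive quotients $\fm^{j}/\fm^{j+1}\simeq L$, each of $K$-dimension $[L:K]$; and likewise $\dim_{K}(T)=\length_{R}(T)\cdot\dim_{K}(L)=\length_{R}(T)\cdot[L:K]$, since the composition factors of $T$ are again copies of $L$. This yields $\jac(R)=\dim_{K}(\Omega^{1,c}_{R/K,\mathrm{tor}})$.

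The argument is essentially formal once Lemma \ref{lemtorsion} is in hand, so I do not expect a serious obstacle here. The only points requiring care are the finiteness of $\length_{R}(T)$, which rests on $\Omega^{1,c}_{R/K}$ being finitely generated (Definition \ref{defomega}) together with its having a free part of positive rank, and the bookkeeping of the factor $[L:K]$, which appears on both sides and cancels; these are the routine verifications to be checked.
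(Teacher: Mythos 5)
Your proof is correct and follows essentially the same route as the paper: the paper's own argument is just the compressed version, noting that $\Omega^{1,c}_{R/K}$ has generic rank $1$ by Lemma \ref{lemtorsion} and invoking the structure theorem over a principal ideal domain, which is exactly the free-rank-one-plus-torsion splitting and Fitting-ideal bookkeeping you carry out explicitly. The only cosmetic difference is that you extract the splitting from the displayed isomorphism (\ref{omegacomputation}) rather than from the structure theorem, but the substance is identical.
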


\begin{proof}
Since we have $\dim_{\Frac(R)}(\Omega^{1,c}_{R/K}\otimes_{R}\Frac(R))=1$ by Lemma \ref{lemtorsion}, this lemma follows from the structure theorem for finitely generated modules over a principal ideal domain.
\end{proof}

We take an absolute $p$-basis $x_{1}, \ldots, x_{c}$ of $K$.
For $0 \leq i \leq c$, let $K_{i}$ be the field
$K(x_{1}^{1/p}, \ldots, x_{i}^{1/p})$
and $R_{i}$ the normalization of $R\otimes_{K} K_{i}$.
Let $L_{i}$ be the residue field of $R_{i}$.
We note that $K_{0}=K, R_{0} =R,$ and $L_{0} = L$.

\begin{lem}
\label{relemkernel}
\begin{enumerate}
\item
There exists an element $a\in R_{1}$ satisfying $R_{1}=R[a]$.
\item
Let $a$ be such an element of $R_{1}$ and $f$ the natural homomorphism 
\[
\Omega^{1,c}_{R/K} \otimes_{R} R_{1} \rightarrow \Omega^{1,c}_{R_{1}/K_{1}}.
\]
Then we have
\[
\Ker f = R_{1}da^{p}.
\]
\item
Suppose $x_{1}^{1/p} \in L$.
Then we can define $q(x_{1})$ in Notation \ref{notationq}.
If $p \mid q(x_{1})$, then there exists an isomorphism
\[
\Ker f \simeq R_{1}/ (\varpi^{q(x_{1})}).
\]
If $p \nmid q(x_{1})$, then there exists an isomorphism
\[
\Ker f \simeq R_{1}/ (\varpi^{q(x_{1})-1}).
\]
\end{enumerate}
\end{lem}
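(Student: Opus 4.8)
For assertions (1) and (2) I would argue as follows. Since $R$ is complete and $R_1$ is a finite extension of discrete valuation rings with $e_1f_1=p$, the extension is monogenic: when $e_1=p$ take $a$ to be a uniformizer $\varpi_1$ of $R_1$ (an Eisenstein generator), and when $f_1=p$ take $a$ to be a lift of a generator of the purely inseparable residue extension $L_1/L$; in either case $R_1=R[a]$ by \cite{Ser}. As $a\notin R$ and $\Frac(R_1)/\Frac(R)$ is purely inseparable of degree $p$, we get $b:=a^p\in\Frac(R)\cap R_1=R$, so $R_1\simeq R[X]/(X^p-b)$ is free of rank $p$ over $R$. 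For (2) I run the two relative cotangent sequences for $K\to R\to R_1$ and $K\to K_1\to R_1$. The presentation above (with Remark \ref{compareEGA}.2) gives $\Omega^{1,c}_{R_1/K}\simeq\big((\Omega^{1,c}_{R/K}\otimes_R R_1)/R_1\,db\big)\oplus R_1\,da$, so the base-change map $u\colon \Omega^{1,c}_{R/K}\otimes_R R_1\to\Omega^{1,c}_{R_1/K}$ has kernel exactly $R_1\,db$ and image the first summand, while $f=\pi\circ u$ with $\Ker\pi=R_1\,dx_1^{1/p}$. Writing $x_1^{1/p}=\sum_{i=0}^{p-1}c_ia^i$ ($c_i\in R$), the $da$-component of $dx_1^{1/p}$ is $P:=\sum_{i=1}^{p-1}ic_ia^{i-1}$, and $P\neq0$ precisely because $x_1^{1/p}\notin R$ (geometric reducedness). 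Hence $\operatorname{Im}u$ meets the free summand $R_1\,da$ only in $0$, forcing $\Ker f=\Ker u=R_1\,db=R_1\,da^p$.

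For assertion (3) the real content is to compute $\operatorname{Ann}_{R_1}(db)$, giving $\Ker f=R_1\,db\simeq R_1/\operatorname{Ann}_{R_1}(db)$. The plan is to use the optimal lift: choose $s\in R$ with $v(s^p-x_1)=q(x_1)=:q$ (Theorem \ref{q}.1) and set $\xi:=s^p-x_1$, so $v(\xi)=q$ and $d\xi=d(s^p)-dx_1=0$ in $\Omega^{1,c}_{R/K}$, both terms vanishing since $s^p$ is a $p$-th power and $x_1\in K$. A crucial pitfall to avoid is that $d$ applies only to honest elements of $R$: expressions such as $\xi\varpi^{-q}$ must first be rewritten inside $R$ before differentiating, since $\varpi^{-q}\notin R$. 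When $p\mid q$ (equivalently $e_1=1$, by Theorem \ref{q}.2) the generator can be taken so that $b$ is a unit with $b\,\varpi^{q}=-\xi$; differentiating and using $q\equiv0\bmod p$ yields $\varpi^{q}\,db=0$, i.e.\ $(\varpi^{q})\subseteq\operatorname{Ann}_{R_1}(db)$. When $p\nmid q$ (equivalently $e_1=p$) one takes $a=\varpi_1$, so $b=\varpi_1^{p}$ is a uniformizer of $R$; writing $\xi=w\varpi^{q}$ with $w$ a unit and using $d\xi=0$ gives $\varpi^{q}\,dw+qw\varpi^{q-1}\,db=0$, and after controlling $dw$ (again keeping $b$ honest) this produces $\varpi^{q-1}\,db=0$, i.e.\ $(\varpi^{q-1})\subseteq\operatorname{Ann}_{R_1}(db)$. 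These upper bounds match exactly the model relations $X^{n-1}dX=0$ and $(Y^p-t)dY=0$ of the examples in Section \ref{eg}.

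The hard part will be the reverse inclusion — that the annihilator is no larger, i.e.\ $\varpi^{q-1}\,db\neq0$ in the case $e_1=1$ (and $\varpi^{q-2}\,db\neq0$ in the case $e_1=p$). Because $db$ is a torsion element it is killed by every continuous derivation into a torsion-free module, so this cannot be detected after $-\otimes\Frac(R_1)$; instead I would exhibit a continuous $K$-derivation $D\colon R\to R/(\varpi^{N})$ (a suitable partial-derivative coming from the presentation) whose value $D(b)$ has exactly the critical valuation, so that $\varpi^{q-1}D(b)\neq0$. The existence of such a $D$ — equivalently, that the torsion relation vanishes to order exactly $q$ and no higher — is where the maximality of $q(x_1)$ is used: a higher-order vanishing would yield a lift $s'$ with $v(s'^p-x_1)>q$, contradicting $q=q(x_1)$.

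Finally, for general $c$ I must check that the torsion contributed by the remaining $p$-basis elements $x_2,\dots,x_c$ does not enlarge $\operatorname{Ann}_{R_1}(db)$, so that the computation genuinely localizes to the $x_1$-step; this I would obtain from the variable-by-variable presentation of Remark \ref{compareEGA}.2, which separates the differential $db$ (tied to $x_1$) from the other directions. The conceptual subtlety underlying everything is that $\Omega^{1,c}_{R/K}$ is generically of rank one even though $L/K$ is inseparable — the Cohen coefficient field is not a $K$-subalgebra, so all of the inseparability of $L/K$ is recorded in the torsion submodule, which is exactly what $\Ker f$ measures.
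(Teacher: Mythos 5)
Your treatment of assertions (1) and (2) is correct and essentially the paper's: the same choice of generator $a$ (a uniformizer when $e_1=p$ via \cite[I \S6 Proposition 17]{Ser}, a suitable residue-field generator when $f_1=p$ via Lemma \ref{choiceofr}), the same presentation $R_1\simeq R[X]/(X^p-a^p)$ feeding into Remark \ref{compareEGA}.2, and for the passage from $\Omega^{1,c}_{R_1/K}$ to $\Omega^{1,c}_{R_1/K_1}$ your explicit computation of the $da$-component of $dx_1^{1/p}$ replaces the paper's generic-rank argument; both work. Your upper bounds in (3) --- $\varpi^{q}db=0$ when $p\mid q$, and $\varpi^{q-1}db=0$ when $p\nmid q$ after normalizing the unit $w$ to $1$ (which does require the standing reduction to $L$ separably closed; "controlling $dw$" is exactly this normalization) --- also match the paper's computation of $dP_1$.

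The genuine gap is the reverse inclusion in (3), which you explicitly defer: you must show that $\mathrm{Ann}_{R_1}(db)$ is \emph{exactly} $(\varpi^{q})$ (resp.\ $(\varpi^{q-1})$), and your proposal only gestures at "exhibiting a continuous derivation $D$ with $D(b)$ of the critical valuation" and at the claim that relations coming from $x_2,\dots,x_c$ "separate" from $db$. Neither step is supplied, and the second is not automatic: in any presentation of $R$ the relations attached to the other $p$-basis elements of $L/K$ involve $S$ and $T_1$ as well, so a relation $c\,db=0$ could a priori be a consequence of those. The paper closes this by first proving the structure theorem (Theorem \ref{lemgen}): $R\simeq K[[S]][T_1,\dots,T_m]/(P_1,\dots,P_m)$ with each $P_i=T_i^{p^{n_i}}-f_i-\widetilde u_iS^{q_i}+\widetilde w_i^pS^{q_i'}$ built from the invariants of Notation \ref{rbqb} (this is where the maximality of $q(x_1)$ actually enters, through the normal form of $P_1$), and then observing that since $\Omega^{1,c}_{R/K}\otimes_R\Frac(R)$ has rank $1$ (Lemma \ref{lemtorsion}), the $m$ elements $dP_1,\dots,dP_m$ are linearly independent over $\Frac(R)$ inside the free module $RdS\oplus\bigoplus RdT_i$; hence any relation $c\,d\widetilde u_1=\sum\lambda_i dP_i$ forces $\lambda_i=0$ for $i\geq 2$ and $c\in(\varpi^{q_1})$. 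Without this presentation-plus-linear-independence mechanism (or an actual construction of your derivation $D$, which amounts to the same information), the lower bound on the annihilator --- the substantive half of assertion (3) --- is unproven.
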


\begin{proof}
We can choose $a$ to be a lift of a generator of the residue field of $R_{1}$ over $L$ (resp.\,a uniformizer of $R_{1}$) by Lemma \ref{choiceofr} (resp.\,\cite[Proposition 17]{Ser}) in the case where $p \mid q(x_{1})$ (resp.\,$p \nmid q(x_{1})$).
Then we have
\[
\Ker (\Omega_{R/K}^{1,c}\otimes_{R}R_{1} \rightarrow \Omega_{R_{1}/K}^{1,c}) = R_{1}da^{p}
\]
by Remark \ref{compareEGA}.2 (cf.\, the calculation (\ref{omegacomputation})).
Moreover, by Remark \ref{compareEGA}.2, we have an exact sequence
\[
\Omega_{K_{1}/K}^{1,c} \otimes_{K_{1}} R_{1} \rightarrow  \Omega_{R_{1}/K}^{1,c} \rightarrow \Omega_{R_{1}/K_{1}}^{1,c}
\]
and $\Omega_{K_{1}/K}^{1,c}$ is free of rank $1$ over $K_{1}$.
Therefore, the homomorphism
\[
\Ima (\Omega_{R/K}^{1,c}\otimes_{R}R_{1} \rightarrow \Omega_{R_{1}/K}^{1,c}) \rightarrow \Omega_{R_{1}/K_{1}}^{1,c} 
\]
is injective since $\Omega_{R/K}^{1,c} \otimes_{R} \Frac (R_{1}) \rightarrow \Omega^{1,c}_{R_{1}/K_{1}} \otimes_{R_{1}} \Frac (R_{1})$ is an isomorphism by Remarks \ref{compareEGA}.4 and \ref{compareEGA}.5.
Now we have 
\[
\Ker f = R_{1}da^{p}.
\]

Assertion 3 follows from Lemma \ref{lemkernel}, which we will prove at the end of this section.
\end{proof}

\begin{lem}
\label{lemgenmain}
We fix an integer $i$ with $1 \leq i \leq c$.
Let
\[
\xymatrix
{
\Omega^{1,c}_{R/K} \otimes_{R} R_{i} \ar[r]^-{f} 
&\Omega^{1,c}_{R_{i-1}/K_{i-1}} \otimes_{R_{i-1}} R_{i} \ar[r]^-{g} 
&\Omega^{1,c}_{R_{i}/K_{i}}
}
\]
be the natural homomorphisms.
Then the sequence
\[
\xymatrix
{
0 \rightarrow \Ker f  \ar[r]  
&\Ker g\circ f \ar[r]^-{f} 
&\Ker g \rightarrow 0
}
\]
is exact.
\end{lem}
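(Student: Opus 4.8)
The plan is to reduce the statement to a single module inclusion, and then verify that inclusion using the explicit description of $\Ker g$ from Lemma \ref{relemkernel} together with a normality argument.

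For the reduction, observe that for any composable $R_i$-linear maps $f,g$ the arrows in the displayed sequence are the evident ones: $\Ker f \hookrightarrow \Ker(g\circ f)$ is the inclusion (valid since $f(\omega)=0$ forces $g(f(\omega))=0$), and $f\colon \Ker(g\circ f)\to\Ker g$ is well defined (since $g(f(\omega))=0$ gives $f(\omega)\in\Ker g$). Exactness at $\Ker f$ and at $\Ker(g\circ f)$ is then purely formal: injectivity on the left is clear, and at the middle the kernel of $f\colon\Ker(g\circ f)\to\Ker g$ is $\Ker f\cap\Ker(g\circ f)=\Ker f$, which is exactly the image of the first map. Thus the only content is exactness at $\Ker g$, i.e.\ the surjectivity of $f\colon\Ker(g\circ f)\to\Ker g$; and since $g(f(\omega))=0$ holds automatically once $f(\omega)\in\Ker g$, this surjectivity is equivalent to the single inclusion $\Ker g\subseteq\Ima f$. (Equivalently, in the six-term kernel--cokernel exact sequence attached to $f,g$, this says that the connecting map $\Ker g\to\Coker f$ vanishes.)

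It therefore remains to prove $\Ker g\subseteq\Ima f$. Applying Lemma \ref{relemkernel} with $(R,K,R_1,K_1)$ replaced by $(R_{i-1},K_{i-1},R_i,K_i)$ --- legitimate, since $x_i$ is part of a $p$-basis of $K_{i-1}$ and $R_{i-1}$ is again a complete discrete valuation ring satisfying the running hypotheses --- I may write $R_i=R_{i-1}[a_i]$ and $\Ker g=R_i\cdot d(a_i^{p})$, where $a_i^{p}\in R_{i-1}$ and $d(a_i^{p})$ denotes the image in $\Omega^{1,c}_{R_{i-1}/K_{i-1}}\otimes_{R_{i-1}}R_i$ of the differential of $a_i^{p}$. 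Since $\Ima f$ is an $R_i$-submodule, it suffices to show that this single generator $d(a_i^{p})$ lies in $\Ima f$.

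The key point is that $a_i^{p}$ in fact lies in $R$, not merely in $R_{i-1}$. Indeed $a_i^{p}\in R_i^{p}$, and I claim $R_i^{p}\subseteq R$. For $s\in R_i$, the element $s$ is integral over $R$ (because $R_i$ is module-finite over $R\otimes_K K_i$, which is module-finite over $R$), so $s^{p}$ is integral over $R$ as well; moreover $s^{p}\in\Frac(R_i)^{p}\subseteq\Frac(R)$, since $\Frac(R_i)=\Frac(R)(x_1^{1/p},\dots,x_i^{1/p})$ and raising to the $p$-th power sends each $x_j^{1/p}$ to $x_j\in K\subseteq\Frac(R)$. As $R$ is normal, $s^{p}\in R$, proving the claim. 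Granting $a_i^{p}\in R$, the differential $d(a_i^{p})$ is already defined in $\Omega^{1,c}_{R/K}$, and its image under $f$ (which is induced by the natural map $\Omega^{1,c}_{R/K}\to\Omega^{1,c}_{R_{i-1}/K_{i-1}}$ sending $dr\mapsto dr$) is precisely the chosen generator of $\Ker g$. Hence $d(a_i^{p})\in\Ima f$, so $\Ker g\subseteq\Ima f$, completing the argument. I expect the main obstacle to be exactly isolating this inclusion from the formal part and recognizing the normality identity $R_i^{p}\subseteq R$; once that identity is in hand, everything else is automatic.
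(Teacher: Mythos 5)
Your proof is correct and takes essentially the same route as the paper's: reduce formally to exactness at $\Ker g$, use Lemma \ref{relemkernel} to write $\Ker g = R_i\, d(a_i^p)$ with $R_i = R_{i-1}[a_i]$, and observe that $a_i^p$ already lies in $R$ so that $d(a_i^p)$ is in the image of $f$. The paper simply asserts $a^p\in R$; your normality argument showing $R_i^p\subseteq R$ is a valid justification of that step.
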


\begin{proof}
We only need to show the exactness at $\Ker g$.
Take an element $a\in R_{i}$ satisfying $R_{i}=R_{i-1}[a]$ by Lemma \ref{relemkernel}.1.
By Lemma \ref{relemkernel}.2, $\Ker g$ is generated by $da^{p}$.
Since we have $a^{p}\in R$, Lemma \ref{lemgenmain} holds.
\end{proof}

\begin{proof}[Proof of Theorem \ref{genjacob} (assuming Lemma \ref{relemkernel}.3)]
Now we start the proof of Theorem \ref{genjacob} assuming Lemma \ref{relemkernel}.3.
We have
\begin{eqnarray*}
\jac (R) &=& \dim_{K}(\Omega_{R/K,\mathrm{tor}}^{1,c})\\
&=& \dim_{K^{1/p}} \Ker \phi\\
&=& \sum_{i=0}^{c-1} \dim_{K_{i}} \Ker (\phi_{i}),\\
\end{eqnarray*}
where $\phi_{i}$ is the natural homomorphism
\[
\Omega^{1,c}_{R_{i-1}/K_{i-1}} \otimes_{R_{i-1}} R_{i} \rightarrow \Omega^{1,c}_{R_{i}/K_{i}}.
\]
Here, the first equality follows from Lemma \ref{lemjactor}, the second equality follows from Lemma \ref{lemtorsion}, and the third equality follows from Lemma \ref{lemgenmain}.
By Lemma \ref{calcul}.1 and Lemma \ref{relemkernel}.3, we have
\[
\dim_{K_{i}} \Ker (\phi_{i}) = 
\begin{cases}
0 &(x_{i}^{1/p} \notin L_{i-1}),\\
\dim_{K_{i}} R_{i}/ \varpi_{R_{i-1}}^{q^{(i)}_{1}} = [L_{i-1}:K_{i}] p q^{(i)}_{1} &(p\mid q^{(i)}_{1}),\\
\dim_{K_{i}} R_{i}/ \varpi_{R_{i-1}}^{q^{(i)}_{1}-1} = [L_{i-1}:K_{i}] p (q^{(i)}_{1}-1) &(p \nmid q^{(i)}_{1}),
\end{cases}
\]
where we write $q^{(i)}_{1}$ for the invariant $q(x_{i})$ for $R_{i-1}$ (cf.\,Notation \ref{notationq}) and $\varpi_{R_{i-1}}$ for a uniformizer in $R_{i-1}$.
On the other hand, by Theorem \ref{q}, the genus changes 
\[
g_{i} :=\dim_{K_{i}} R_{i}/(R_{i-1}\otimes_{K_{i-1}} K_{i})\quad (1\leq i\leq c)
\] 
satisfy
\[
g_{i}
=
\begin{cases}
0 &(x_{i}^{1/p} \notin L_{i-1}),\\
[L_{i-1}:K_{i}] \frac{p-1}{2}q_{1}^{(i)} &(p\mid q^{(i)}_{1}),\\
[L_{i-1}:K_{i}] \frac{p-1}{2}(q_{1}^{(i)}-1)  &(p \nmid q^{(i)}_{1}).
\end{cases}
\]
Since $g_{10} = \sum_{i=0}^{c-1} g_{i}$, we have the desired equality. 
It finishes the proof of Theorem \ref{genjacob} up to the proof of Lemma \ref{relemkernel}.3.
\end{proof}

To complete the proof of Theorem \ref{genjacob}, we prove Lemma \ref{relemkernel}.3.
To understand the structure of $\Omega_{R/K}^{1,c}$, we need to describe the structure of $R$ in terms of invariants that are similar to $q(x)$.

\begin{nota}
\label{rbqb}
\begin{enumerate}
\item 
Let $y_{1}, \ldots, y_{m}$ be a $p$-basis of $L$ over $K$.
We put 
\[
n_{i} :=  \min{\{n \in \Z_{> 0} \,|\,y_{i}^{p^{n}}\in K(y_{1}, \ldots, y_{i-1})\}}.
\]
We put $z_{1} := y_{1}^{p^{n_{1}}}$. 
For $1 \leq i \leq m$, we fix $f_{i} \in K[T_{1}, \ldots, T_{i-1}]$ such that 
\[
f_{i}(y_{1}, \ldots, y_{i-1}) =y_{i}^{p^{n_{i}}}
\]
and $f_{1}= z_{1}$.
\item
For $1 \leq i \leq m$, we define elements $r'_{i}\in R$ and natural numbers $q_{i}$ and $q'_{i}$ inductively.
First, we put $q_{1} := \underset{r}{\max}\,v(r^{p}-z_{1})$, where $r\in R$ ranges over all the lifts of $y_{1}^{p^{n_{1}-1}}$.
Moreover, we put $q'_{1} := \underset{r}{\max}\,v(r^{p^{n_{1}}}-z_{1})$, where $r\in R$ ranges over all the lifts of $y_{1}$.
We fix $r'_{1} \in R$ such that $v(r_{1}^{\prime p^{n_{1}}}-z_{1}) = q'_{1}$.

We suppose that $q_{j}, q'_{j},$ and $r'_{j}$ are defined for $j= 1, \ldots, i-1$.
We put 
\[
q_{i} := \underset{r}{\max}\,v(r^{p}- f_{i} (r'_{1}, \ldots, r'_{i-1})),
\] 
where $r\in R$ ranges over all the lifts of $y_{i}^{p^{n_{i}}-1}$.
We also put 
\[
q'_{i} := \underset{r}{\max}\,v(r^{p^{n_{i}}}- f_{i} (r'_{1}, \ldots, r'_{i-1})), 
\]
where  $r\in R$ ranges over all the lifts of $y_{i}$.
We also fix $r'_{i} \in R$ such that 
\[v(r_{i}^{\prime p^{n_{i}}}-f_{i} (r'_{1}, \ldots, r'_{i-1})) = q'_{i}.
\]
Now $q_{i}, q_{i}'$, and $r_{i}'$ are defined.
\item
For $1 \leq i \leq m$, we take $u_{i}' \in R^{\times}$ such that
\[
r_{i}^{\prime p^{n_{i}}}- f_{i} (r'_{1}, \ldots, r'_{i-1}) = u_{i}' \varpi^{q'_{i}}.
\]
We also take $r_{i}\in R$ and a unit $u_{i} \in R^{\times}$ such that
\begin{equation}
\label{r_{i}}
r_{i}^{p}-f_{i} (r'_{1}, \ldots, r'_{i-1}) = u_{i} \varpi^{q_{i}}.
\end{equation}
We note that we can take $r_{i} = r_{i}'^{p^{n_{i}}-1}$ if $q_{i} = q_{i}'$ holds.
Therefore, we always assume this condition in the following.
\end{enumerate}
\end{nota}

\begin{thm}
\label{lemgen}
There exists a $K$-algebra isomorphism
\[
K[[S]][T_{1}, \ldots, T_{m}]/(T_{i}^{p^{n_{i}}} - f_{i} (T_{1}, \ldots, T_{i-1}) - \widetilde{u}_{i} S^{q_{i}} + \widetilde{w}_{i}^{p} S^{q'_{i}})_{1 \leq i \leq m}
\simeq R.
\]
Here, $\widetilde{u}_{i}$ and $\widetilde{w}_{i}$ are elements of $K[[S]][T_{1}, \ldots, T_{m}]$, and $\widetilde{u}_{i}$ (resp.\,$S$) goes to the unit $u_{i} \in R^{\times}$ which we took in (\ref{r_{i}}) (resp.\,the element $\varpi\in R$).
Moreover, the following hold.
\begin{itemize}
\item For each $1 \leq i \leq m$, if $q_{i} = q'_{i}$, then we can take $\widetilde{w}_{i}$ to be $0$.
\item If $p\nmid q_{1}$, by replacing $\varpi$ by another uniformizer, we can take $\widetilde{u}_{1}$ to be $1$.
\end{itemize}
In the following, we denote the above polynomial
\[
T_{i}^{p^{n_{i}}} - f_{i} (T_{1}, \ldots, T_{i-1}) - \widetilde{u}_{i} S^{q_{i}} + \widetilde{w}_{i}^{p} S^{q'_{i}}
\]
by $P_{i}$.
\end{thm}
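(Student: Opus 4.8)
The plan is to exhibit the isomorphism as the descent of the evident substitution map. Set $\Lambda:=K[[S]]$ and define a continuous $K$-algebra homomorphism $\phi_0\colon \Lambda[T_1,\dots,T_m]\to R$ by $S\mapsto\varpi$ and $T_i\mapsto r_i'$; this is legitimate since $R$ is complete, so power series in $\varpi$ converge. The heart of the construction is the identity
\[
r_i'^{\,p^{n_i}}-f_i(r_1',\dots,r_{i-1}')=u_i\varpi^{q_i}-w_i^{\,p}\varpi^{q_i'}
\]
in $R$, with $w_i$ a unit and $w_i=0$ when $q_i=q_i'$. To obtain it, note $r_i'^{\,p^{n_i}}=(r_i'^{\,p^{n_i}-1})^p$ and that both $r_i$ and $r_i'^{\,p^{n_i}-1}$ lift $y_i^{p^{n_i}-1}$, so $\delta:=r_i-r_i'^{\,p^{n_i}-1}\in\fm$; the characteristic-$p$ identity $(a-b)^p=a^p-b^p$ gives $r_i^{\,p}-r_i'^{\,p^{n_i}}=\delta^p$, and subtracting the defining relations $r_i^{\,p}-f_i=u_i\varpi^{q_i}$ and $r_i'^{\,p^{n_i}}-f_i=u_i'\varpi^{q_i'}$ yields $u_i'\varpi^{q_i'}=u_i\varpi^{q_i}-\delta^p$. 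Comparing valuations forces $q_i'\le q_i$, and in the case $q_i'<q_i$ forces $p\mid q_i'$ with $\delta=w_i\varpi^{q_i'/p}$ for a unit $w_i$; in the case $q_i'=q_i$ one chooses $r_i=r_i'^{\,p^{n_i}-1}$, so $\delta=0$. Choosing lifts $\tilde u_i,\tilde w_i\in\Lambda[T]$ of $u_i,w_i$ then gives $\phi_0(P_i)=0$, so $\phi_0$ descends to $\phi\colon A:=\Lambda[T]/(P_i)\to R$, and the first bullet ($\tilde w_i=0$ when $q_i=q_i'$) is exactly the case $\delta=0$.

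Next I would record the structure of the target. Since $R$ is a complete discrete valuation ring with the same uniformizer as $\Lambda=K[[\varpi]]$ and residue extension $L/K$ of degree $N:=[L:K]=\prod_i p^{n_i}$, it is free of rank $N$ over $\Lambda$, and the monomials $\{r'^{\beta}:0\le\beta_i<p^{n_i}\}$, which reduce modulo $\varpi$ to the $K$-basis $\{y^\beta\}$ of $L$ coming from the $p$-basis, form a $\Lambda$-basis by completeness and Nakayama. This immediately gives surjectivity of $\phi$ (the $r_i'$ and $\varpi$ topologically generate $R$) and lets me take the lifts $\tilde u_i,\tilde w_i$ to be $\Lambda$-linear combinations of the reduced monomials $T^\beta$.

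The hard part is injectivity, i.e.\ that the reduced monomials $T^\beta$ already span $A$ over $\Lambda$; granting this, $\phi$ carries a spanning set bijectively onto a $\Lambda$-basis and is therefore an isomorphism. The difficulty is that the relations $P_i$ rewrite $T_i^{p^{n_i}}$ as $f_i(T_{<i})$ plus the corrections $\tilde u_iS^{q_i}$ and $\tilde w_i^{\,p}S^{q_i'}$, and the $p$-th power $\tilde w_i^{\,p}$ can reintroduce monomials of high $T$-degree, so the naive reduction need not terminate in finitely many steps. What saves the argument is that every correction carries a strictly positive power of $S$ (as $q_i,q_i'\ge 1$): working modulo $S^n$, an induction on $n$ based at $A/SA\cong L$ shows the $N$ reduced monomials span $A/S^nA$ over $\Lambda/S^n$, whence $A=M+S^nA$ for all $n$, where $M:=\sum_\beta\Lambda T^\beta$. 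Combined with the $\Lambda$-linear splitting $A=M\oplus\ker\phi$ induced by the isomorphism $\phi|_M\colon M\xrightarrow{\sim}R$, this gives $\ker\phi=S^n\ker\phi$ for every $n$, so passing to the $S$-adic completion produces a finite free $\Lambda$-module $\widehat A$ of rank $N$ with $\widehat\phi\colon\widehat A\xrightarrow{\sim}R$. I expect the genuine obstacle to be the final step of checking that the uncompleted quotient $A$ is itself finite over $\Lambda$ (equivalently $S$-adically separated and complete, via Krull's intersection theorem), so that $A\xrightarrow{\sim}\widehat A\cong R$; this is precisely where the monic shape of $P_i$ in $T_i$ and the positivity of $q_i,q_i'$ must be exploited carefully.

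Finally, for the normalization in the last bullet, suppose $p\nmid q_1$, so $f_1=z_1\in K$. Because we have reduced to $K$ separably closed and $L/K$ is purely inseparable, $L$ is itself separably closed; since $\gcd(q_1,p)=1$ the polynomial $X^{q_1}-\overline{u_1}$ is separable and has a root in $L$, so Hensel's lemma furnishes a $q_1$-th root of the unit $u_1$ in $R^{\times}$. Replacing $\varpi$ by the uniformizer $u_1^{1/q_1}\varpi$ absorbs $u_1$ into $S^{q_1}$, allowing $\tilde u_1=1$ after a harmless readjustment of $\tilde w_1$, and the rest of the presentation is unaffected.
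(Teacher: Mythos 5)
Your construction of the relations, and the surjectivity and normalization steps, are essentially the paper's: you derive $r_i^p-r_i'^{\,p^{n_i}}=\delta^p$ with $\delta=r_i-r_i'^{\,p^{n_i-1}}$, observe that $p\nmid q_i'$ forces $q_i=q_i'$ and $\delta=0$ while $p\mid q_i'$ lets you extract the $p$-th root $w_i$, get surjectivity from the uniformizer and the residue field, and obtain $\widetilde u_1=1$ from a $q_1$-th root of $u_1$ in the separably closed residue field, exactly as in the paper (which phrases the same identity as $(r_i^p-r_i'^{\,p^{n_i}})/\varpi^{q_i'}=u_i\varpi^{q_i-q_i'}-u_i'$). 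One small slip: $q_i'\le q_i$ does not follow from ``comparing valuations'' in your displayed identity; it holds because $r_i'^{\,p^{n_i-1}}$ is itself one of the lifts over which $q_i$ is defined as a maximum.

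The genuine gap is the one you flag yourself: you never prove that $\phi\colon A=\Lambda[T_1,\dots,T_m]/(P_i)\to R$ is injective, i.e.\ that $A$ is finite free of rank $N$ over $\Lambda$. Worse, the partial results you establish cannot close this gap on their own, because $A=M+S^nA$ and hence $\Ker\phi=S^n\Ker\phi$ hold for \emph{every} choice of the lifts $\widetilde u_i,\widetilde w_i$ (all correction terms carry a positive power of $S$), while the conclusion genuinely fails for some choices: if $P_1^{0}$ is a valid relation and you replace $\widetilde u_1$ by $\widetilde u_1+T_1P_1^{0}$ (still a lift of $u_1$), then $P_1$ becomes $(1-S^{q_1}T_1)P_1^{0}$ and $A$ acquires a nonzero quotient that is a $K((S))$-algebra, e.g.\ a direct factor $\Lambda[T_1]/(1-S^{q_1}T_1)\simeq K((S))$ when $m=1$; no such quotient of $R$ exists, yet your identities all persist and the $S$-adic completion silently discards the extra factor. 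So the missing ingredient is an argument tied to a \emph{specific} choice of lifts, e.g.\ showing that for that choice $\Ker\phi$ is a finitely generated $\Lambda$-module (so Nakayama applies to $\Ker\phi=S\Ker\phi$), or that $1+SA$ consists of nonzerodivisors of $A$. Your proposed choice (reduced lifts) is itself delicate: $\widetilde w_i^{\,p}$ then has $T_j$-degree up to $p(p^{n_j}-1)\ge p^{n_j}$, so $P_i$ need not be monic of the expected degree and already the generic fibre $A\otimes_\Lambda K((S))$ can have the wrong dimension; controlling $\deg\widetilde w_i$ (or reorganizing the relation via $u_i'$) is part of what must be done. To be fair, the paper's own proof is equally terse at precisely this point --- it asserts that ``both sides are free modules over $K[[S]]$ of the same rank'' without specifying the lifts or justifying freeness --- so you have reproduced the published argument up to and including its weakest step; but as a self-contained proof, yours is incomplete here.
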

\begin{proof} 
Let 
\[
\varphi\colon K[[S]][T_{1}, \ldots, T_{n}] \rightarrow R
\]
be the $K$-algebra homomorphism which sends $S$ to $\varpi$
and $T_{i}$ to $r'_{i}$.
Since the image of $\varphi$ contains a uniformizer and maps onto the residue field $L$ (note that $L = \bigoplus_{0 \leq i_{j} < p^{n_{j}}} K y_{1}^{i_{1}} \cdots y_{m}^{i_{m}}$), the homomorphism $\varphi$ is surjective.
By the definition of $u_{i}'$ and $u_{i}$, we have
\begin{equation}
\frac{r_{i}^{p}-r_{i}^{\prime p^{n_{i}}}}{\varpi^{q'_{i}}} = u_{i} \varpi^{q_{i}-q'_{i}} - u_{i}'.
\label{compareqq'}
\end{equation}
If $p \nmid q'_{i}$, then we have $q_{i} = q'_{i}$ (by the same argument as that in Lemma \ref{q<q}), and the left-hand side of (\ref{compareqq'}) is $0$ by the choice of $r_{i}$ (cf.\,Notation \ref{rbqb}.3).
Therefore, we can always take the $p$-th root $w_{i}$ of the left-hand side of (\ref{compareqq'}).
Then we have 
\[
r_{i}^{\prime p^{n_{i}}} - f_{i} (r'_{1}, \ldots, r'_{i-1})  = u_{i}' \varpi^{q'_{i}} = u_{i} \varpi^{q_{i}} - w_{i}^{p} \varpi^{q'_{i}}.
\]
Take elements $\widetilde{u}_{i} \in \varphi^{-1} (u_{i})$ and $\widetilde{w}_{i} \in \varphi^{-1} (w_{i})$.
Then $\varphi$ induces a surjective homomorphism
\[
K[[S]][T_{1}, \ldots, T_{m}]/(T_{i}^{p^{n_{i}}} -  f_{i} (T_{1}, \ldots, T_{i-1}) - \widetilde{u}_{i} S^{q_{i}} + \widetilde{w}_{i}^{p} S^{q'_{i}})_{1 \leq i \leq m}
\rightarrow R,
\]
which is an isomorphism since both sides are free modules over $K[[S]]$ of the same rank.
We note that if $p \nmid q_{1}$, we have $u_{1}=1$ after replacing $\varpi$ by a suitable uniformizer, since $L$ is separably closed.
In this case, we can take $\widetilde{u}_{1}$ to be $1$.
\end{proof}

We note that, for any $x_{1} \in (K\cap L^{p}) \setminus K^{p}$, there exists a $p$-basis $y_{1}, \ldots, y_{m}$ of $L$ over $K$ such that $z_{1} =x_{1}$.
Therefore, the following lemma gives the proof of Lemma \ref{relemkernel}.3.

\begin{lem}[cf.\,Lemma \ref{relemkernel}.3]
\label{lemkernel}
We denote $K(z_{1}^{1/p})$ by $K'$ and the normalization of $R\otimes_{K}K'$ by $R'$.
Let $f$ be the natural homomorphism 
\[
\Omega^{1,c}_{R/K} \otimes_{R} R' \rightarrow \Omega^{1,c}_{R'/K'}.
\]
If $p \mid q_{1}$, then we have
\[
\Ker f \simeq R' d u_{1} \simeq R'/(\varpi^{q_{1}}).
\]
On the other hand, if $p \nmid q_{1}$ and 
$\widetilde{u}_{1} = 1$, then we have
\[
\Ker f \simeq R' d \varpi \simeq R'/ (\varpi^{q_{1}-1}).
\]
\end{lem}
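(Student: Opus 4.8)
The plan is to reduce the whole statement to a single annihilator computation inside $\Omega^{1,c}_{R/K}$, carried out on the presentation of $R$ furnished by Theorem \ref{lemgen}. First I would normalize the data. By the remark preceding the lemma I may choose the $p$-basis $y_1,\dots,y_m$ of $L/K$ so that $z_1=x_1$, and in addition take $y_1=z_1^{1/p}$, so that $n_1=1$; then $q_1=q_1'$ and $\widetilde w_1=0$ in Theorem \ref{lemgen}, and the bottom relation becomes simply $P_1=T_1^{p}-z_1-\widetilde u_1 S^{q_1}$, with $S\mapsto\varpi$, $T_1\mapsto r_1$ (a lift of $z_1^{1/p}$ with $r_1^p-z_1=u_1\varpi^{q_1}$), $\widetilde u_1\mapsto u_1$. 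Next I would apply Lemma \ref{relemkernel}.2 to get $\Ker f=R'\,da^p$ with $a^p\in R$ (note $a^p$ is integral over the normal ring $R$ and lies in $\Frac(R)$, hence in $R$). Since $R'$ is a finite torsion-free, hence free, module over the DVR $R$, the cyclic module $R'\,da^p$ is isomorphic to $R'/\bigl(\operatorname{Ann}_R(da^p)\cdot R'\bigr)$; thus it suffices to compute the ideal $\operatorname{Ann}_R(da^p)\subseteq R$.

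I would then identify $a^p$ in each case. If $p\mid q_1$ (so $e_1=1$, $f_1=p$), Lemma \ref{choiceofr} lets me take $a=(r_1-z_1^{1/p})/\varpi^{q_1/p}$, whence $a^p=(r_1^p-z_1)/\varpi^{q_1}=u_1$, so $da^p=du_1$. If $p\nmid q_1$ (so $e_1=p$, and after replacing $\varpi$ by a suitable uniformizer $\widetilde u_1=1$), then $(r_1-z_1^{1/p})^p=\varpi^{q_1}$ exactly; choosing integers $\alpha,\gamma$ with $\alpha q_1+\gamma p=1$, the element $a=(r_1-z_1^{1/p})^{\alpha}\varpi^{\gamma}$ is a uniformizer of $R'$ with $a^p=\varpi$, so $da^p=d\varpi$. (It is precisely the normalization $\widetilde u_1=1$ that clears the unit and makes $a^p=\varpi$ on the nose, as in the examples of Section \ref{eg}.)

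For the annihilator I would use the presentation $\Omega^{1,c}_{R/K}=F/\langle dP_1,\dots,dP_m\rangle$ of Remark \ref{compareEGA}.2, where $F$ is free of rank $m+1$ on $d\varpi,dr_1,\dots,dr_m$. Differentiating $P_1$ and using $d(T_1^{p})=0$, $dz_1=0$ gives in characteristic $p$
$dP_1=-\varpi^{q_1}\,du_1$ when $p\mid q_1$, and $dP_1=-q_1\varpi^{q_1-1}\,d\varpi$ (with $q_1$ a unit) when $p\nmid q_1$. This yields the containments $(\varpi^{q_1})\subseteq\operatorname{Ann}_R(du_1)$ and $(\varpi^{q_1-1})\subseteq\operatorname{Ann}_R(d\varpi)$. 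For the reverse inclusions I would invoke that $\Omega^{1,c}_{R/K}\otimes_R\Frac(R)$ is $1$-dimensional (Lemma \ref{lemtorsion}, Remark \ref{compareEGA}.4); since $\operatorname{rank}F=m+1$, the relations $dP_1,\dots,dP_m$ are $\Frac(R)$-linearly independent. As $dP_1$ is a nonzero scalar multiple of $du_1$ (resp. $d\varpi$), that vector is nonzero in $F$ and independent of $dP_2,\dots,dP_m$ over $\Frac(R)$; decomposing any relation $r\,du_1=\sum_i c_i\,dP_i$ into its $dP_1$-component and its $\langle dP_2,\dots,dP_m\rangle$-component forces both to vanish, giving $r\in(\varpi^{q_1})$ (resp. $r\in(\varpi^{q_1-1})$). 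Hence $\operatorname{Ann}_R(du_1)=(\varpi^{q_1})$ and $\operatorname{Ann}_R(d\varpi)=(\varpi^{q_1-1})$, and transporting to $R'$ gives $\Ker f\simeq R'/(\varpi^{q_1})$ resp. $R'/(\varpi^{q_1-1})$.

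The main obstacle is concentrated in making $dP_1$ exactly proportional to $du_1$ (resp. $d\varpi$): this is why the reduction to $n_1=1$, which kills the $\widetilde w_1$-term and renders $f_1=z_1$ a constant, is indispensable — without it $dP_1$ acquires an extra $d\varpi$-contribution and ceases to be parallel to $du_1$, and the clean splitting argument breaks. The only remaining delicate points, both routine once the presentation is normalized, are the Bézout construction producing $a^p=\varpi$ in the ramified case and the flat base change of annihilators along $R\subseteq R'$.
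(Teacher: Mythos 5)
Your proof is correct and follows essentially the same route as the paper's: reduce via Lemma \ref{relemkernel}.2 to computing the annihilator of $da^{p}$ (which you identify as $du_{1}$, resp.\ $d\varpi$), read off $\varpi^{q_{1}}du_{1}=0$ (resp.\ $\varpi^{q_{1}-1}d\varpi=0$) from $dP_{1}$ in the presentation of Theorem \ref{lemgen}, and obtain the reverse inclusion from the rank-one statement of Lemma \ref{lemtorsion} via linear independence of $dP_{1},\ldots,dP_{m}$. The only cosmetic difference is that you re-choose the $p$-basis so that $n_{1}=1$ in order to kill the $\widetilde{w}_{1}$-term, whereas the paper keeps general $n_{1}$ and notes that $d(\widetilde{w}_{1}^{p}S^{q'_{1}})$ vanishes anyway (either $\widetilde{w}_{1}=0$ or $p\mid q'_{1}$), so that normalization is a convenience rather than, as you suggest, indispensable.
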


\begin{proof}
In the proof, we use the notation in Theorem \ref{lemgen}.
By the proof of Lemma \ref{relemkernel}.1, we have 
$
R' = R[u_{1}^{1/p}]
$ (resp.\,$R' = R[\varpi^{1/p}]$) 
in the case where $p \mid q_{1}$ (resp.\,$p\nmid q_{1}$).
Moreover, by Lemma \ref{relemkernel}.2, we have
\[
\Ker f \simeq R' du_{1}\quad(\text{resp.}\,\Ker f \simeq R'd\varpi).
\]
Therefore, it suffices to show that $ Rdu_{1} \simeq R/ (\varpi^{q_{1}})$ (resp.\,$Rd\varpi \simeq R/ \varpi^{q_{1}-1}$) in $\Omega^{1,c}_{R/K}$.
Since $p \mid q_{1}$ (resp.$\,p\nmid q_{1}$) and the image of
\[
dP_{1}= - S^{q_{1}}d \widetilde{u}_{1}\quad (\text{resp.}\,dP_{1}=-q_{1}S^{q_{1}-1}dS)
\]
is $0$ in $\Omega^{1,c}_{R/K}$, we have 
\[
\varpi^{q_{1}}d u_{1}=0\quad(\text{resp.}\,\varpi^{q_{1}-1}d\varpi=0).
\]
Here, we note that if $p\nmid q'_{1}$ holds, then we have $q'_{1}=q_{1}$ and $\widetilde{w}_{1}=0$.
By Remark \ref{compareEGA}.2 and Theorem \ref{lemgen}, we have 
\[
\Omega_{R/K}^{1,c} \simeq \frac{R d S \oplus \bigoplus_{1 \leq i \leq m} R d T_{i}}{\langle dP_{i} \rangle_{1 \leq i \leq m}}.
\]
By Lemma \ref{lemtorsion}, $\Omega_{R/K}^{1,c} \otimes_{R}\Frac (R)$ is of rank $1$.
Therefore, $dP_{1}$ is not contained in the submodule generated by $d P_{j} \in R d S \oplus \bigoplus_{1 \leq i \leq m} R d T_{i}$ $(2 \leq j \leq m)$, and it finishes the proof.
\end{proof}

\newcommand{\etalchar}[1]{$^{#1}$}
\providecommand{\bysame}{\leavevmode\hbox to3em{\hrulefill}\thinspace}
\providecommand{\MR}{\relax\ifhmode\unskip\space\fi MR }
\providecommand{\MRhref}[2]{%
  \href{http://www.ams.org/mathscinet-getitem?mr=#1}{#2}
}
\providecommand{\href}[2]{#2}


\begin{thebibliography}{Stacks99}
\bibitem[A]{A} R.~Achet, \emph{The {P}icard group of the forms of the affine line and of the
  additive group}, J. Pure Appl. Algebra \textbf{221} (2017), no.~11,
  2838--2860.
\bibitem[BG]{Buchweitz1980}
R.-O. Buchweitz and G.-M. Greuel, \emph{The {M}ilnor number and deformations of
  complex curve singularities}, Invent. Math. \textbf{58} (1980), no.~3,
  241--281. 
\bibitem[BLR]{BLR} S.~Bosch, W.~L\"{u}tkebohmert, and M.~Raynaud, \emph{N\'{e}ron models},
  Ergebnisse der Mathematik und ihrer Grenzgebiete (3) [Results in Mathematics
  and Related Areas (3)], vol.~21, Springer-Verlag, Berlin, 1990.
\bibitem[CGP]{CGP} B.~Conrad, O.~Gabber, and G.~Prasad, \emph{Pseudo-reductive groups}, second
  ed., New Mathematical Monographs, vol.~26, Cambridge University Press,
  Cambridge, 2015.
\bibitem[EGA41]{EGA41}
A.~Grothendieck, 
  {\emph{\'{E}l\'{e}ments de g\'{e}om\'{e}trie alg\'{e}brique. {IV}. \'{E}tude
  locale des sch\'{e}mas et des morphismes de sch\'{e}mas. {I}}}, Inst. Hautes
  \'{E}tudes Sci. Publ. Math. (1964), no.~20, 259 pp.
\bibitem[EGA42]{EGA42} 
A.~Grothendieck, 
  {\emph{\'{E}l\'{e}ments de g\'{e}om\'{e}trie alg\'{e}brique. {IV}. \'{E}tude
  locale des sch\'{e}mas et des morphismes de sch\'{e}mas. {II}}}, Inst. Hautes
  \'{E}tudes Sci. Publ. Math. (1965), no.~24, 231 pp. 
\bibitem[EK]{Esteves2003}
E.~Esteves and S.~Kleiman, \emph{Bounds on leaves of one-dimensional
  foliations}, vol.~34, 2003, Dedicated to the 50th anniversary of IMPA,
  pp.~145--169.
\bibitem[FGA]{FGA} A. Grothendieck, {\it Fondements de la G\'eom\'etrie Alg\'ebrique} Sem. Bourbaki, exp. $\mathrm{n}^{\circ}$ 149 (1956/57), 182 (1958/59), 190 (1959/60), 195 (1959/60), 212 (1960/61), 221 (1960/61), 232 (1961/62), 236 (1961/62), Benjamin, New York (1966)
\bibitem[GLS]{Greuel2007}
G.-M. Greuel, C.~Lossen, and E.~Shustin, \emph{Introduction to singularities
  and deformations}, Springer Monographs in Mathematics, Springer, Berlin,
  2007.
\bibitem[IIL]{IIL}
K.~Ito, T.~Ito, and C.~Liedtke, \emph{Deformations of rational curves in
  positive characteristic}, J. Reine Angew. Math. \textbf{769} (2020), 55--86.
\bibitem[Mil]{Milne2017}
J.~S. Milne, \emph{Algebraic groups: The
  theory of group schemes of finite type over a field}, 
Cambridge Studies in Advanced
  Mathematics, vol. 170, Cambridge University Press, Cambridge, (2017).
  
\bibitem[PW]{PW} Z.~{Patakfalvi} and J.~{Waldron}, \emph{{Singularities of General Fibers and
  the LMMP}}, preprint (2017), arXiv:1708.04268, to appear in Am. J. Math.
   \bibitem[Rim]{Rim} D.~S. Rim, \emph{Torsion differentials and deformation}, Trans. Amer. Math.
  Soc. \textbf{169} (1972), 257--278.
\bibitem[Sc]{Sc} S.~Schr\"{o}er, \emph{On genus change in algebraic curves over imperfect
  fields}, Proc. Amer. Math. Soc. \textbf{137} (2009), no.~4, 1239--1243.
  
\bibitem[Ser1]{Ser1} J.-P. Serre, \emph{Groupes alg\'{e}briques et corps de classes}, Publications
  de l'Institut de Math\'{e}matique de l'Universit\'{e} de Nancago, VII,
  Hermann, Paris, 1959.
\bibitem[Ser2]{Ser} \bysame, \emph{Local fields}, Graduate Texts in Mathematics, vol.~67,
  Springer-Verlag, New York-Berlin, 1979, Translated from the French by Marvin
  Jay Greenberg.
  \bibitem[SGA3]{SGA3} M. Demazure, A. Grothendieck, {\it Sch\'emas en groupes I, II, III}, Lecture Notes
in Math \textbf{151}, \textbf{152}, \textbf{153}, Springer-Verlag, New York (1970).
  \bibitem[Sp]{Sp} T.~A. Springer, \emph{Linear algebraic groups}, second ed., Progress in
  Mathematics, vol.~9, Birkh\"{a}user Boston, Inc., Boston, MA, 1998.
Birka\"user.
\bibitem[Stacks]{Stacks} The Stacks project, \texttt{https://stacks.math.columbia.edu}.
\bibitem[Tan]{Tanaka2019}
H.~{Tanaka}, \emph{{Invariants of algebraic varieties over imperfect fields}},
  preprint (2019), arXiv:1903.10113.
\bibitem[Tat]{Ta} J.~Tate, \emph{Genus change in inseparable extensions of function fields},
  Proc. Amer. Math. Soc. \textbf{3} (1952), 400--406.
\bibitem[Tju]{Tjurina1969}
G.~N. Tjurina, \emph{Locally semi-universal flat deformations of isolated
  singularities of complex spaces}, Izv. Akad. Nauk SSSR Ser. Mat. \textbf{33}
  (1969), 1026--1058.
\end{thebibliography}
\end{document}